\begin{document}

\title{K\"ahler geometry on total spaces of vector bundles over elliptic curves}

\author{Hanyu Wu}
\address{School of Mathematical Sciences, Xiamen University, Xiamen, Fujian, 361005, China.}
\email{{19020230157192@stu.xmu.edu.cn}}
\author{Bo Yang}
\thanks{The second named author is partially supported by National Natural Science Foundation of China
with the grant numbers: 11801475, 12141101, and 12271451, and Natural Science Foundation of Fujian Province of China with the grant No.2019J05012.}
\address{School of Mathematical Sciences, Xiamen University, Xiamen, Fujian, 361005, China.}
\email{{boyang@xmu.edu.cn}}

\date{11/12/2025}

\begin{abstract}
We study function theory and K\"ahler geometry on total spaces of vector bundles on an elliptic curve. For rank two vector bundles of degree zero, we show that any two total spaces are biholomorphic if and only if the corresponding vector bundles are isomorphic. We also construct complete Gauduchon Hermitian metrics with flat Chern-Ricci curvature on these total spaces. These metrics are natural in the sense that the corresponding spaces of holomorphic functions of polynomial growth coincide with `polynomials' on these spaces. Moreover, we characterize all complete K\"ahler metrics with nonnegative bisectional curvature on total spaces of line bundles over an elliptic curve.
\end{abstract}

\subjclass[2020]{32Q05, 32Q15, 53C55}

\maketitle

\markleft{On total spaces of vector bundles over elliptic curves}
\markright{On total spaces of vector bundles over elliptic curves}

\setcounter{tocdepth}{1}
\tableofcontents

\section{Introduction}

\subsection{Vector bundles over elliptic curves: two examples}
In this work, we are interested in the K\"ahler geometry of a special class of noncompact complex manifold, namely those which are total spaces of vector bundles over an elliptic curve. To motivate the questions guiding our investigation, we begin with two illustrative examples.

\subsubsection{Line bundles with degree zero}
It is well-known that line bundles over complex tori can be studied by theta functions. In particular, the theorem of Appel-Humbert (see \cite[p.308]{GH} or \cite[p.50]{Debarre} for example) states that any line bundle $L$ over a complex torus $M=\mathbb{C}^n/\Lambda$ is isomorphic to a specific type of line bundle constructed by theta functions. For example, $\operatorname{Pic}^{0} (M)$, which is the space of line bundles on a complex torus $M$ with zero degree, can be identified with $\operatorname{Hom}(\Lambda, S^1)$ with $S^1=\{e^{2\pi i \theta}\ |\ \theta \in [0, 1)\}$.

Now we consider an elliptic curve
\begin{align}
\Sigma=\mathbb{C}/\Gamma, \ \ \ \   \Gamma=\mathbb{Z}\text{-span}\{1, \tau\} \ \ \text{for}\ \tau \in \mathbb{C}\ \text{with}\ \operatorname{Im}\tau>0.     \label{ecurve}
\end{align}
Therefore, $\pi_1(\Sigma)$ is a free abelian group generated by
\begin{equation}
\gamma_1(t)=z+t, \gamma_2(t)=z+\tau t\ \ \text{where}\  t \in [0, 1].   \label{funda_group}
\end{equation}
Then the total space of any $L \in \operatorname{Pic}^{0} (\Sigma)$ can be written as
\begin{align}
X_{\theta}=\mathbb{C}^2/\pi_1(\Sigma), \ \ \text{with}\ \gamma_k(z, \xi)=(\gamma_k(z), (\rho(\gamma_k))\xi),\ \text{for}\ k=1, 2.  \label{Xthetadef}
\end{align}
Here, we pick $\alpha \in \operatorname{Hom}(\Lambda, S^1)$ as
\begin{align}
\alpha(\gamma_k)=e^{2\pi i \theta_k}\ \ \text{for some}\ \  \theta_k \in [0, 1) \ \ \text{for}\ \ k \in \{1, 2\}. \label{thetadef}
\end{align}
It follows that the standard Euclidean metric on $\mathbb{C}^2$
\[
\sqrt{-1}(dz \wedge d\overline{z}+d\xi \wedge d\overline{\xi})
\] descends to $X_{\theta}$. Let $\omega$ be the corresponding K\"ahler form on $X_{\theta}$ for $\theta=(\theta_1, \theta_2)$. Then $(X_{\theta}, \omega)$ is locally a metric product, and it splits globally only if $\theta=(0, 0)$. The corresponding function theory on $X_{\theta}$ depends on the arithmetic properties of $\theta$. For example, there are no nonconstant holomorphic functions on $X_{\theta}$ for either of $\theta_1$ and $\theta_2$ in (\ref{thetadef}) is irrational.

\subsubsection{An indecomposable rank $2$ bundle of degree zero}
We review a specific rank $2$ bundle which appeared in Atiyah \cite{Atiyah} and Demailly-Peternell-Schneider \cite[Example 1.17]{DPS94}. In the latter, an example of a compact K\"ahler manifold with nef canonical line bundle is constructed. Moreover, it cannot admit any K\"ahler metric with nonnegative Ricci curvature. The example is the projective bundle $\mathbb{P}(E)$ for some rank $2$ vector bundle $E$ over an elliptic curve. It is the rank $2$ vector bundle $E$ that we would like to discuss.

Let $\Sigma$ be the elliptic curve defined in (\ref{ecurve}). We consider a representation $\rho: \pi_1(\Sigma) \rightarrow \operatorname{GL}(2, \mathbb{C})$ by
\[
\rho(\gamma_1)=\begin{pmatrix}  1 & 0 \\ 0 & 1 \end{pmatrix},\ \ \  \rho(\gamma_2)=\begin{pmatrix}  1 & 1 \\ 0 & 1 \end{pmatrix}.
\]
The vector bundle $E$ which is associated to $\rho$ is defined as
\begin{align}
E=(\mathbb{C} \times \mathbb{C}^2) /\pi_1(\Sigma),\ \ \ \text{with}\ \gamma_i(z, z_1, z_2)=(\gamma_i(z), \rho(\gamma_i)(z_1, z_2))\ \text{for}\ 1\leq i\leq 2.  \label{Edef}
\end{align}
In fact, $E$ is exactly the canonical rank $2$ bundle $F_2$ of degree $0$ in Atiyah \cite[Theorem 5]{Atiyah}. It is proved in \cite{Atiyah} and \cite{DPS94} that $E$ is is indecomposable, and it is an extension of two trivial line bundles. It is further shown in {\cite{DPS94}} that $E$ defined in (\ref{Edef}) is numerically flat (i.e. both $E$ and $\operatorname{det}(E^{\ast})$ are nef), and can not be Griffiths nonnegative. We restate the first statement in the following.

\begin{lemma}[{\cite{Atiyah}}, {\cite[Example 1.17]{DPS94}}]\label{fact1} $E$ defined in (\ref{Edef}) is an extension of two trivial line bundles
\begin{align}
0 \rightarrow \mathcal{O} \rightarrow E \rightarrow \mathcal{O} \rightarrow 0.
\label{exact}
\end{align}
Moreover, $h^0(\Sigma, E)=1$ and $E$ does not split holomorphically.
\end{lemma}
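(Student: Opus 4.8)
The plan is to exploit the explicit upper-triangular form of $\rho$ in (\ref{Edef}), which lets me read off the extension structure and compute the space of sections directly from equivariant entire functions on $\mathbb{C}$. First I would observe that since both $\rho(\gamma_1)$ and $\rho(\gamma_2)$ are upper triangular, the coordinate line $\{z_2=0\}\subset\mathbb{C}^2$ is $\pi_1(\Sigma)$-invariant and $\rho$ acts on it as the identity; this produces a trivial sub-line-bundle $\mathcal{O}\hookrightarrow E$ generated by the constant lift $(1,0)$. The representation induced on the quotient $\mathbb{C}^2/\langle e_1\rangle$ (the $z_2$-coordinate) is again trivial because the lower-right entry of each $\rho(\gamma_i)$ equals $1$, so the quotient bundle is $\mathcal{O}$. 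This yields the exact sequence (\ref{exact}).

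The heart of the argument is the computation $h^0(\Sigma,E)=1$. A global section lifts to an entire map $s=(s_1,s_2):\mathbb{C}\to\mathbb{C}^2$ obeying $s(\gamma_k z)=\rho(\gamma_k)s(z)$, which unwinds to $s_1(z+1)=s_1(z)$, $s_2(z+1)=s_2(z)$, $s_2(z+\tau)=s_2(z)$, and $s_1(z+\tau)=s_1(z)+s_2(z)$. Then $s_2$ descends to a holomorphic function on the compact curve $\Sigma$, hence $s_2\equiv c$ is constant by Liouville. Since $s_1$ is entire and $1$-periodic it expands as a Laurent series $s_1=\sum_{n}a_n e^{2\pi i n z}$ convergent on all of $\mathbb{C}$, and substituting into $s_1(z+\tau)-s_1(z)=c$ gives $a_n\bigl(e^{2\pi i n\tau}-1\bigr)=0$ for $n\neq 0$ together with $0=c$ for the constant mode. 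Because $\operatorname{Im}\tau>0$ forces $|e^{2\pi i n\tau}|\neq 1$ for every $n\neq 0$, all $a_n=0$ with $n\neq 0$, while the constant mode kills $c$. Thus $s_2\equiv 0$ and $s_1$ is constant, so $H^0(\Sigma,E)$ is spanned by $(1,0)$ and $h^0=1$.

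Finally, non-splitting follows: from the long exact sequence of (\ref{exact}), a holomorphic splitting would give $h^0(E)=2\,h^0(\mathcal{O})=2$, contradicting $h^0=1$, so (\ref{exact}) is non-split. To upgrade this to full indecomposability I would use that $\det E=\mathcal{O}$ (indeed $\det\rho\equiv 1$), so any decomposition $E\cong L_1\oplus L_2$ satisfies $\deg L_1+\deg L_2=0$ and $h^0(L_1)+h^0(L_2)=1$; the balanced case $\deg L_1=\deg L_2=0$ would force $\det E$ nontrivial, and the unbalanced case $\deg L_1=1,\ \deg L_2=-1$ is excluded since the sub-bundle inclusion $\mathcal{O}\hookrightarrow E$ would then factor through $L_1$ as a nowhere-vanishing section of a degree-one line bundle, which cannot exist. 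I expect the only genuinely delicate points to be justifying the termwise Laurent comparison rigorously and, if full indecomposability (rather than mere non-splitting of (\ref{exact})) is required, carrying out this last degree bookkeeping cleanly.
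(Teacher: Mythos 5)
Your proposal is correct and follows essentially the same route as the paper's argument: the trivial sub- and quotient bundles are read off from the diagonal entries of the upper-triangular $\rho$, and $h^0(\Sigma,E)=1$ is obtained by lifting a section to an equivariant entire map $(s_1,s_2)$, killing $s_2$ by double periodicity, and then killing the nonconstant part of $s_1$ (you via the Fourier--Laurent expansion in $e^{2\pi i z}$, the paper via a linear-growth/Liouville argument --- an immaterial difference). Your closing paragraph actually proves the stronger statement that $E$ is indecomposable, not just that the sequence (\ref{exact}) is non-split, which goes beyond what the paper establishes here; it is sound once you also note that $\deg L_1\geq 2$ is excluded because it would give $h^0(L_1)=\deg L_1\geq 2$.
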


In view of Lemma \ref{fact1}, we may wonder whether function theory on the total space of the bundle $E$ can reflect the indecomposability of $E$ in some sense. As an example, we may characterize holomorphic functions on its total space $X$.

\begin{proposition}[a special case of Proposition \ref{Xfunction}]\label{holoDPS}
    Let $X$ be the total space of $E$ defined in (\ref{Edef}). Then any $f \in \mathcal{O}(X)$ is the quotient of $f(z_2) \in \mathcal{O}(\mathbb{C})$ in the sense of (\ref{Edef}). Here $(z, z_1, z_2)$ denotes the global coordinates on $\mathbb{C}^3$ as in Proposition \ref{Xfunction}.
\end{proposition}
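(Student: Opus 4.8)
The plan is to pull everything back to the universal cover. A function $f \in \mathcal{O}(X)$ lifts to an entire function $F$ on $\mathbb{C}^3$, with coordinates $(z,z_1,z_2)$, that is invariant under the deck action in \eqref{Edef}. Explicitly, $\gamma_1$ gives
\[
F(z+1, z_1, z_2) = F(z, z_1, z_2),
\]
while $\gamma_2$ gives
\[
F(z+\tau, z_1+z_2, z_2) = F(z, z_1, z_2).
\]
Since $z_2$ is itself a deck-invariant coordinate, any holomorphic function of $z_2$ alone descends to $X$; the content of the statement is the converse, that every invariant $F$ depends on $z_2$ only. I would extract the two invariances one at a time.

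First I use periodicity in $z$. Because $F(z+1,\cdot,\cdot)=F(z,\cdot,\cdot)$, the function $F$ factors through $q=e^{2\pi i z}$ and admits a Laurent expansion
\[
F(z, z_1, z_2) = \sum_{n \in \mathbb{Z}} F_n(z_1, z_2)\, e^{2\pi i n z},
\]
each coefficient $F_n$ being entire on $\mathbb{C}^2$, the series converging locally uniformly. Substituting into the $\gamma_2$-invariance and matching the $e^{2\pi i n z}$-terms yields, for each $n$, the functional equation $F_n(z_1+z_2, z_2) = e^{-2\pi i n\tau}\, F_n(z_1, z_2)$, which I denote $(*_n)$. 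For $n \neq 0$ the multiplier $e^{-2\pi i n\tau}$ is never $1$ (this is exactly where $\operatorname{Im}\tau>0$ enters, as in the $\operatorname{Pic}^0$ discussion above). Setting $z_2=0$ in $(*_n)$ forces $F_n(\,\cdot\,,0)\equiv 0$; since $F_n$ then vanishes on the smooth divisor $\{z_2=0\}$ I may write $F_n=z_2 G_n$ with $G_n$ entire, and $(*_n)$ shows $G_n$ solves the same equation; iterating proves $z_2^m \mid F_n$ for every $m$, hence $F_n\equiv 0$. Thus only the term $n=0$ survives and $F=F_0(z_1,z_2)$ is already independent of $z$.

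The heart of the argument is the remaining equation $(*_0)$, namely $F_0(z_1+z_2, z_2) = F_0(z_1, z_2)$; note that the period of this $z_1$-translation is the variable $z_2$ itself, which is precisely the analytic reflection of the indecomposable (unipotent) nature of $E$. I would Taylor expand in $z_2$,
\[
F_0(z_1, z_2) = \sum_{k \geq 0} c_k(z_1)\, z_2^k,
\]
with each $c_k$ entire in $z_1$, substitute into $(*_0)$, expand $c_k(z_1+z_2)$ by Taylor's theorem, and compare coefficients of $z_2^N$. The resulting identities are $\sum_{k=0}^{N-1} \frac{c_k^{(N-k)}(z_1)}{(N-k)!} = 0$ for every $N \geq 1$. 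These are triangular: the case $N=1$ gives $c_0'\equiv 0$, and an induction in which $c_0,\dots,c_{N-2}$ are already known to be constant annihilates every term except $c_{N-1}'$, forcing $c_{N-1}$ constant too. Hence all $c_k$ are constants, $F_0=\sum_k c_k z_2^k$ depends only on $z_2$, and the proposition holds with $f(z_2)=F_0(0,z_2)\in\mathcal{O}(\mathbb{C})$.

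I expect the only genuine obstacle to be this final step $(*_0)$: the modes $n\neq 0$ are dispatched by the familiar arithmetic of $\operatorname{Im}\tau>0$, but the mode $n=0$ carries no such multiplier and must instead be controlled through the triangular recursion, which is the analytic manifestation of the nilpotent off-diagonal entry of $\rho(\gamma_2)$. The remaining points are routine: the legitimacy of the Laurent-in-$q$ and Taylor-in-$z_2$ expansions (both converge locally uniformly because $F$ is entire and $1$-periodic in $z$) and the coefficient comparisons, all justified by uniqueness of series coefficients.
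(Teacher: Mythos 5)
Your proof is correct, and it takes a genuinely different route from the paper's. The paper (in the proof of Proposition \ref{Xfunction}, of which this is a special case) expands $f$ as a Taylor series in the fiber variables $(z_1,z_2)$ first, obtaining coefficient functions $f_{k-t,t}(z)$ on the base, and then uses the two quasi-periodicity relations together with Liouville-type arguments (bounded, respectively linear-growth, entire functions) and a triangular descent on the homogeneous pieces to force these coefficients to be constants that vanish unless they multiply a pure power of $z_2$. You decompose in the opposite order: a Fourier--Laurent expansion in $z$ (legitimate here because $\rho(\gamma_1)$ is the identity, so $F$ is genuinely $1$-periodic in $z$ with $(z_1,z_2)$ fixed), after which the nonzero modes die cleanly from the multiplier $e^{-2\pi i n\tau}\neq 1$ combined with the $z_2^m$-divisibility iteration, and the zero mode is handled by a Taylor expansion in $z_2$ with a triangular recursion on the $c_k(z_1)$. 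Both arguments ultimately rest on the same two ingredients --- $\operatorname{Im}\tau>0$ killing the twisted modes, and a triangular system reflecting the unipotent off-diagonal entry --- but your order of decomposition gives a cleaner separation of the two mechanisms in this special case, at the cost of relying on the exact $1$-periodicity: for the general representation (\ref{gen-rep}) with $\theta_1\neq 0$ the Fourier step would need to be replaced by something like the paper's fiber-degree expansion, which is why the paper's version scales directly to Proposition \ref{Xfunction} while yours is tailored to the bundle (\ref{Edef}).
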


\subsubsection{Guiding questions}
Vector bundles over compact Riemann surfaces have been extensively studied in the context of algebraic geometry. Given any vector bundle $E$ of rank $r$ over a compact Riemann surface $M$, we may write $E=\oplus_{i=1}^p E_i$ as a direct sum of indecomposable vector bundles. Then a classical result of Weil \cite{Weil} states that $E$ is associated to a representation $\rho: \pi_1(M) \rightarrow \operatorname{GL}(r, \mathbb{C})$ if and only if $\operatorname{deg}(E_i)=0$ for any $1 \leq i \leq p$. Moreover, indecomposable vector bundles over elliptic curves were classified by Atiyah \cite{Atiyah}, see also Oda \cite{Oda}. Moduli spaces of stable (semistable) vector bundles on compact Riemann surfaces of $g \geq 2$ have been extensively studied since Narasimhan-Seshadri \cite{NS1965}. Tu \cite{Tu} studied moduli problems on vector bundles on elliptic curves.  In view of these algebraic results and two examples discussed in the above, we would like to study complex differential geometry and function theory on total spaces of vector bundles on an elliptic curve (or a complex torus). In particular, our investigation is guided by the following question.

\begin{question}\label{queintro}

\begin{enumerate}[label=(\arabic*)]
    \item   On a compact Riemann surface $M$, Weil's result corresponds a representation $\pi_1(M) \rightarrow \operatorname{GL}(n, \mathbb{C})$ to a vector bundle of degree zero on $M$. However, this correspondence is not unique, see \cite[p.128]{Newstead} and {\cite[p.578]{Donaldson2021}}. Can we characterize those representations which correpsond to (holomorphically) isomorphic vector bundles?

   \item Study the fine structure of the space of holomorphic functions on the total space of a vector bundle on an elliptic curve. Is it possible that the total spaces of two non-isomorphic vector bundles are biholomorphic complex manifolds?

   \item  What is the `best' complete K\"ahler metric on the total space of a given vector bundle over an elliptic curve? Does the existence of a K\"ahler metric satisfying suitable curvature conditions on the total space relate to the algebraic structure of the vector bundle?
\end{enumerate}

\end{question}

\subsection{Statement of main results}

The goal of the paper is to give partial solutions to Question \ref{queintro} when the underlying Rimeann surface is an elliptic curve. Let $\Sigma$ be the elliptic curve defined in (\ref{ecurve}). According to Appel-Humbert's theorem (see \cite[Theorem on p.19]{Mumford} and \cite[Theorem 5.17]{Debarre} for example), any line bundle over $\Sigma$ is isomorphic to some $L(H, \alpha)$. Here $H$ is a Hermitian bilinear form on $\mathbb{C}$ so that $\operatorname{Im}H$ is $\mathbb{Z}$-valued when restricted onto $\Gamma \times \Gamma$, and $\alpha: \Gamma \rightarrow S^1$ satisfies (\ref{alphasemi}). The line bundle $L(H, \alpha)$ over $\Sigma$ is defined as a quotient of $\mathbb{C}^2$ under the action
\begin{align}
(z, \xi) \rightarrow (z+\gamma, \alpha(\gamma) e^{\pi H z \overline{\gamma}+\frac{\pi}{2}H |\gamma|^2} \xi).    \label{AHaction_intro}
\end{align} It is direct to check $\operatorname{deg}(L(H, \alpha))=H \operatorname{Im}\tau$.

Motivated by the indecompsable vector bundle defined in (\ref{Edef}) studied by \cite{Atiyah} and \cite{DPS94}, we consider a vector bundle $E$ over $\Sigma$ which is associated with a representation $\rho: \pi_1(\Sigma) \rightarrow \operatorname{GL}(2, \mathbb{C})$
\begin{equation}
    \rho(\gamma_1)=e^{2\pi i\theta_1} \begin{pmatrix}
        1 & b_1\\
        0& 1
    \end{pmatrix}, \ \
    \rho(\gamma_2)=e^{2\pi i\theta_2} \begin{pmatrix}
       1 & b_2\\
        0 & 1
    \end{pmatrix}.   \label{gen-rep}
\end{equation} Recall that $\pi_1(\Sigma)$ is generated by $\gamma_1$ and $\gamma_2$ in (\ref{funda_group}). Here we assume $\theta_i \in [0, 1)$ and $b_i \in \mathbb{C}$ for $i=\{1, 2\}$.

We consider a rank $2$ vector bundle $E$ of degree zero over $\Sigma$ which belongs to one of the following three types.
\begin{itemize}
    \item  \namedlabel{T1}{\textbf{Type I}}: $E=L_1 \oplus L_2$ where $L_1, L_2 \in \operatorname{Pic}^0(\Sigma)$.

    \item  \namedlabel{T2}{\textbf{Type II}}: $E=L_1 \oplus L_2$ where $\operatorname{deg}L_1=-\operatorname{deg}L_2>0$.

    \item  \namedlabel{T3}{\textbf{Type III}}: $E$ is indecomposable and associated with a representation defined in (\ref{gen-rep}).
\end{itemize}

We observe that any rank $2$ vector bundle of degree zero over $\Sigma$ belongs to exactly one of \ref{T1}, \ref{T2}, and \ref{T3}. Indeed, according to Weil \cite{Weil}, any indecomposable rank $2$ bundle $E$ of degree $0$ must be associated with some representation $\rho$ of $\pi_1(\Sigma)$ into $\operatorname{GL}(2, \mathbb{C})$. By a standard fact in linear algebra, two commutative matrices $\rho(\gamma_1)$ and $\rho(\gamma_2)$ are simultaneously similar to upper triangular matrices. Since $E$ is indecomposable, we may further assume that at least one of $\rho(\gamma_1)$ and $\rho(\gamma_2)$ has multiple eigenvalues. In fact, Atiyah \cite{Atiyah} shows that any such indecomposable bundle $E$ admits an extension of some $L \in \operatorname{Pic}^{0} (\Sigma)$ by itself. Therefore, any indecomposable rank $2$  bundle $E$ of degree $0$ corresponds to some upper triangular representation $\rho: \pi_1(\Sigma) \rightarrow \operatorname{GL}(2, \mathbb{C})$ defined in (\ref{gen-rep}). Except for this aspect of Atiyah's result, our main results study vector bundles of \ref{T1}, \ref{T2}, and \ref{T3} directly and do not rely on the full power of the classification results in \cite{Atiyah}.

Let $\operatorname{Aut}(M)$ denote the group of biholomorphisms of a complex manifold $M$. By a delicate analysis of $\operatorname{Aut}(\mathbb{C}^n)$ under an invariant condition related to (\ref{AHaction_intro}), we prove the following results which imply that the complex structures of total spaces are rigid enough to determine algebraic structures of underlying vector bundles.

\begin{proposition}\label{res_1_intro}
    Let $E$ be any rank $2$ vector bundle $E$ of degree $0$ over $\Sigma$ and $X$ the total space of $E$. Then

    \begin{enumerate}[label=(\arabic*)]

        \item  $E$ is of \ref{T3} if and only if it is associated to a representation in (\ref{gen-rep}) with $b_2 \neq b_1 \tau$. In particular, any two vector bundles $E$ and $\widetilde{E}$ of \ref{T3} are bundle isomorphic if the corresponding $\theta_k=\widetilde{\theta}_k$ for $k=1, 2$ in (\ref{gen-rep}).

        \item  If $E$ belongs to any of \ref{T1}, \ref{T2}, and \ref{T3}, we have a complete characterization of $\mathcal{O}(X)$, see (\ref{taylor1}), (\ref{taylor2}), and (\ref{taylor3}) for a detailed statement. In particular, if $E$ is of \ref{T3}, then $\mathcal{O}(X)$ contains a nonconstant element if and only if both $\theta_1$ and $\theta_2$ in (\ref{gen-rep}) are rational. Note that $X$ is not holomorphically convex in this case.

        \item $X$ admits a complete flat K\"ahler metric if and only if $E$ is of \ref{T1}.
    \end{enumerate}
\end{proposition}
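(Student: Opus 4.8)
The plan is to prove both implications, with the forward direction (existence of a complete flat K\"ahler metric when $E$ is of \ref{T1}) being elementary and the reverse direction (such a metric forces \ref{T1}) resting on a holonomy argument that I regard as the heart of the matter. For the easy direction, if $E$ is of \ref{T1} I write $E=L_1\oplus L_2$ with each $L_i\in\operatorname{Pic}^0(\Sigma)$, so by Appel--Humbert each $L_i$ is given by a unitary character $\alpha_i\colon\Gamma\to S^1$ (the form $H$ vanishes since $\operatorname{deg}L_i=H\operatorname{Im}\tau=0$). Then $X$ is the quotient of $\mathbb{C}^3$ with coordinates $(z,\xi_1,\xi_2)$ by the free, properly discontinuous action $\gamma_k\cdot(z,\xi_1,\xi_2)=(z+c_k,\alpha_1(\gamma_k)\xi_1,\alpha_2(\gamma_k)\xi_2)$, with $c_1=1$, $c_2=\tau$. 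Since each generator acts by a Euclidean translation in $z$ together with unimodular rescalings in $\xi_1,\xi_2$, the standard Hermitian metric $\sqrt{-1}(dz\wedge d\overline{z}+d\xi_1\wedge d\overline{\xi_1}+d\xi_2\wedge d\overline{\xi_2})$ is invariant and descends to a complete flat K\"ahler metric on $X$, exactly as in the line bundle example of the introduction.

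For the reverse direction, suppose $X$ carries a complete flat K\"ahler metric $g$. Since $g$ is K\"ahler and flat, its Levi--Civita connection agrees with the Chern connection of $(T^{1,0}X,g)$, so $T^{1,0}X$ is a flat Hermitian holomorphic bundle; equivalently, the universal cover of $X$ is biholomorphically isometric to standard $\mathbb{C}^3$ and the deck group acts by unitary affine maps $w\mapsto U_kw+a_k$. The resulting holonomy is a unitary representation $\rho\colon\pi_1(X)\to U(3)$, and since the zero section $\iota\colon\Sigma\hookrightarrow X$ induces an isomorphism $\pi_1(\Sigma)\xrightarrow{\sim}\pi_1(X)\cong\mathbb{Z}^2$, restricting the flat bundle to $\Sigma$ realizes $T^{1,0}X|_\Sigma$ as the holomorphic flat bundle attached to $\rho\colon\mathbb{Z}^2\to U(3)$. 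As $\mathbb{Z}^2$ is abelian, the commuting matrices $\rho(\gamma_1),\rho(\gamma_2)$ are simultaneously unitarily diagonalizable, so this flat bundle splits holomorphically as $L_1\oplus L_2\oplus L_3$ with each $L_i\in\operatorname{Pic}^0(\Sigma)$, because a flat unitary line bundle over $\Sigma$ has vanishing degree.

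I would then combine this with the adjunction identity for the zero section, $T^{1,0}X|_\Sigma\cong T\Sigma\oplus E\cong\mathcal{O}_\Sigma\oplus E$, to obtain a holomorphic isomorphism $\mathcal{O}_\Sigma\oplus E\cong L_1\oplus L_2\oplus L_3$ over $\Sigma$. Finally I would invoke the Krull--Schmidt property for vector bundles over an elliptic curve (Atiyah \cite{Atiyah}), namely that the multiset of indecomposable summands is unique. If $E$ were indecomposable, i.e.\ of \ref{T3}, the left side would carry a rank-two indecomposable factor while the right side consists of three line bundles, which is impossible; hence $E$ splits into line bundles, and matching factors forces each summand of $E$ to coincide with one of the degree-zero $L_i$, which rules out \ref{T2} on degree grounds. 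Thus $E$ is of \ref{T1}.

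The main obstacle is the second paragraph: one must argue carefully that the intrinsic holomorphic structure of $T^{1,0}X|_\Sigma$ coincides with that of the flat bundle associated to the unitary holonomy $\rho$, and that the restriction of the flat connection to the zero section has holonomy exactly $\rho$ under the identification $\pi_1(\Sigma)\cong\pi_1(X)$. This is precisely what lets me bypass a direct and considerably more delicate analysis of which elements of $\operatorname{Aut}(\mathbb{C}^3)$ conjugate the affine deck group of \ref{gen-rep} into $U(3)\ltimes\mathbb{C}^3$: the holonomy viewpoint packages that conjugation into the statement that the linear parts $U_k$ are unitary, hence semisimple with unimodular spectrum, which is exactly what excludes the unipotent shears underlying \ref{T3} and the non-unitary rescalings forced by \ref{T2}.
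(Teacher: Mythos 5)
Your proposal addresses only part (3) of Proposition \ref{res_1_intro}. Parts (1) and (2) — the characterization of \ref{T3} as exactly the representations (\ref{gen-rep}) with $b_2\neq b_1\tau$, the bundle-isomorphism statement for equal $(\theta_1,\theta_2)$, and the complete description of $\mathcal{O}(X)$ via the expansions (\ref{taylor1}), (\ref{taylor2}), (\ref{taylor3}) together with the rationality criterion for nonconstant functions — are not touched at all. In the paper these are the content of Lemma \ref{isobundle} and Proposition \ref{Xfunction}, and they require a genuinely separate argument: one lifts $f\in\mathcal{O}(X)$ to $\mathcal{O}(\mathbb{C}^3)$, expands in $z_1,z_2$, and uses the deck-group invariance together with Liouville's theorem to force the coefficient functions to be constant or affine, which is where both the condition $b_2=b_1\tau$ and the rationality of $\theta_1,\theta_2$ enter. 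Nothing in your holonomy argument produces this information, so as a proof of the stated proposition the proposal is substantially incomplete.

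For part (3) itself your argument is correct, and it takes a genuinely different route from the paper's. The paper (Proposition \ref{flattotal}, with Proposition \ref{rankkflat} handling \ref{T2}) writes the pulled-back flat metric as $\sqrt{-1}\sum_j df_j\wedge d\overline{f_j}$ for some $F\in\operatorname{Aut}(\mathbb{C}^3)$, restricts to the zero section, and compares the coefficients of the invariant $(1,1)$-form term by term: double periodicity and Liouville's theorem force the relevant coefficients to be constant or affine, the relation $b_2=b_1\tau$ drops out to exclude \ref{T3}, and a volume-form/maximum-principle argument excludes \ref{T2}. You instead observe that a complete flat K\"ahler metric makes the universal cover Euclidean $\mathbb{C}^3$ with unitary-affine deck group, so that $T^{1,0}X$ is the holomorphic flat bundle of a unitary representation of $\pi_1(X)\cong\mathbb{Z}^2$; restricting to the zero section, using the canonical splitting $T^{1,0}X|_\Sigma\cong T\Sigma\oplus E\cong\mathcal{O}\oplus E$ (via $d\pi$) and simultaneous diagonalization of commuting unitaries, you get $\mathcal{O}\oplus E\cong L_1\oplus L_2\oplus L_3$ with each $L_i\in\operatorname{Pic}^0(\Sigma)$, and Atiyah's Krull--Schmidt theorem kills \ref{T3} (a rank-two indecomposable summand cannot appear on the right) and \ref{T2} (degree reasons). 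The step you flag as the main obstacle is indeed the crux but is standard: for a K\"ahler metric the Chern connection coincides with the Levi--Civita connection, so flatness makes local parallel frames $\bar\partial$-closed and the intrinsic holomorphic structure of $T^{1,0}X$ agrees with that of the flat bundle of the holonomy representation. Your approach is more conceptual and avoids the coefficient bookkeeping, at the cost of invoking Krull--Schmidt; the paper's computation is more elementary and, importantly, the same coefficient-comparison machinery is reused throughout Sections \ref{sec4} and \ref{sec5}, which is why the authors set it up that way.
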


\begin{theorem}\label{res_2_intro}
    Let $\Sigma$ (resp. $\widetilde{\Sigma}$) be the elliptic curve defined by $\{1, \tau\}$  (resp. $\{1, \widetilde{\tau}\}$) as in (\ref{ecurve}), and $X$ (resp. $Y$) the total space of a rank $2$ bundle $E$ (resp. $\widetilde{E}$) of degree $0$ over $\Sigma$ (resp. $\widetilde{\Sigma}$). If $X$ is biholomorphic to $Y$, then $\Sigma$ and $\widetilde{\Sigma}$ are isomorphic, and $E$ is (holomorphically) bundle isomorphic to $F$.
\end{theorem}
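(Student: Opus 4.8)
The plan is to pass to universal covers and reduce the statement to a rigidity property of equivariant automorphisms of $\mathbb{C}^3$. Since the fiber $\mathbb{C}^2$ is contractible, $\pi_1(X)\cong\pi_1(\Sigma)\cong\mathbb{Z}^2$ and the universal cover of $X$ is $\mathbb{C}^3$ with deck group generated by $T_{\gamma_k}\colon(z,z_1,z_2)\mapsto(z+\gamma_k,\rho(\gamma_k)(z_1,z_2))$ as in (\ref{gen-rep}); the same holds for $Y$ with data $\widetilde\tau,\widetilde\rho$. A biholomorphism $\Phi\colon X\to Y$ therefore lifts to a biholomorphism $F\colon\mathbb{C}^3\to\mathbb{C}^3$ together with a group isomorphism $\sigma\colon\pi_1(\Sigma)\to\pi_1(\widetilde\Sigma)$ satisfying the equivariance $F\circ T_{\gamma}=\widetilde T_{\sigma(\gamma)}\circ F$ for all $\gamma$. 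Everything will follow once we control how rigid such an $F$ must be.

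The heart of the argument is a structural lemma describing equivariant elements of $\operatorname{Aut}(\mathbb{C}^3)$: I claim that $F$ preserves the vertical fibration $\{z=\mathrm{const}\}$ and is triangular, in the sense that its base component $F^0$ depends on $z$ alone and is affine, $F^0(z)=\lambda z+\mu$ with $\lambda\neq 0$, while the fiber components are affine in $(z_1,z_2)$. Consider first \ref{T3}, where the fiber action $\rho(\gamma_k)$ is independent of $z$. Equivariance forces $F^0\circ T_\gamma-F^0$ to be a fixed vector of $\widetilde\Gamma$ independent of the base point, so differentiating in $z$ shows $\partial_zF^0$ is invariant and hence descends to an element of $\mathcal O(X)$, which by Proposition \ref{res_1_intro}(2) is a polynomial in the vertical coordinate $z_2$ (or a constant). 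Since the lattice vector $\lambda\gamma_k$ must absorb the entire translation defect, $\partial_zF^0$ is forced to be a constant $\lambda$; a subsequent Liouville-type argument, exploiting the linear-in-$z_2$ growth of the unipotent action together again with the smallness of $\mathcal O(X)$, removes the residual fiber dependence of $F^0$. For \ref{T2} the Appel--Humbert cocycle is $z$-dependent and nonflat, and one runs an analogous growth argument in place of the derivative computation. For \ref{T1} the step is cleanest: the Euclidean metric descends to a complete flat K\"ahler metric (Proposition \ref{res_1_intro}(3)), and a biholomorphism between such flat manifolds is affine on $\mathbb{C}^3$ by a standard holonomy/Bieberbach argument, so triangularity is immediate.

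Once $F^0(z)=\lambda z+\mu$ is known, it descends to a biholomorphism $\Sigma\to\widetilde\Sigma$, and compatibility with the deck translations gives $\lambda\Gamma=\widetilde\Gamma$; hence the two lattices are homothetic and $\Sigma\cong\widetilde\Sigma$, proving the first assertion. For the second, restrict $F$ to the preserved fibers: the fiber part of $F$ intertwines $\rho$ and $\widetilde\rho$, twisted by $\sigma$ and by the base map $\lambda$, and the triangular/affine form of $F$ shows this intertwiner is $\mathbb{C}$-linear in $(z_1,z_2)$. Thus $\rho$ and $\widetilde\rho$ are conjugate representations compatibly with $\sigma$, which is exactly the data of a holomorphic bundle isomorphism $E\cong\widetilde E$ under the Weil correspondence \cite{Weil}; the refinements in Proposition \ref{res_1_intro}(1) then pin down the bundles. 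Finally, the three types are biholomorphism invariants---\ref{T1} is singled out by admitting a complete flat K\"ahler metric (Proposition \ref{res_1_intro}(3)), while \ref{T2} and \ref{T3} are separated by the structure of $\mathcal O(X)$ in Proposition \ref{res_1_intro}(2)---so $E$ and $\widetilde E$ are automatically of the same type and the analysis above applies uniformly.

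I expect the main obstacle to be the structural lemma of the second paragraph: ruling out the wild shear- and overshear-type automorphisms that populate $\operatorname{Aut}(\mathbb{C}^3)$ and showing that equivariance collapses $F$ to triangular affine form. The delicate point is the Liouville-type control of $F^0$ and of the fiber components in the irrational and nonflat regimes, where $\mathcal O(X)$ is small and rigidity must be extracted purely from the growth of the deck action rather than from an abundance of invariant holomorphic functions.
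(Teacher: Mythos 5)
Your overall strategy---lift the biholomorphism to an equivariant $F\in\operatorname{Aut}(\mathbb{C}^3)$, control its base component to get $\Sigma\cong\widetilde{\Sigma}$, then extract a bundle isomorphism---is the same as the paper's. But the structural lemma on which you hang everything is false as stated. Equivariance does \emph{not} collapse $F$ to triangular affine form: for the trivial bundle $\mathcal{O}\oplus\mathcal{O}$ (\ref{T1}), and equally for the Atiyah bundle (\ref{Edef}) of \ref{T3}, the shear $(z,z_1,z_2)\mapsto(z,z_1+g(z_2),z_2)$ commutes with the deck action for \emph{every} entire $g$, so it descends to an automorphism of $X$ whose fiber component is not affine in $(z_1,z_2)$. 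You flag ``ruling out the wild shears'' as the main obstacle, but they cannot be ruled out---they genuinely populate the equivariant automorphism group. For the same reason your \ref{T1} shortcut fails: a biholomorphism between complete \emph{noncompact} flat K\"ahler manifolds need not be an isometry, so no Bieberbach-type argument makes its lift affine. What is actually true, and all that is needed, is the much weaker statement that the restriction of $F$ to the zero section $\{z_1=z_2=0\}$ is controlled: the paper expands each component in a Taylor series in $(z_1,z_2)$ and shows only that the zeroth- and first-order coefficients $\xi_0,\lambda_0,\eta_0,\xi_1,\eta_1,\dots$ are affine or constant in $z$ (via Liouville, since the deck action preserves the zero section), which already forces $\xi_0(z)=Az+B$ and hence the curve isomorphism. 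Your derivation that $\partial_zF^0$ descends to $\mathcal{O}(X)$ and is then ``forced to be constant'' because the lattice absorbs the defect is also not a complete argument; the zero-section restriction is the clean way to close it.

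The second substantive gap is the \ref{T2} case, which is the hardest part of the paper's proof and which your proposal essentially does not address. Bundles of \ref{T2} are \emph{not} associated to representations of $\pi_1(\Sigma)$ (Weil's correspondence requires every indecomposable summand to have degree zero), so your concluding step---``the fiber part of $F$ intertwines $\rho$ and $\widetilde{\rho}$, hence $E\cong\widetilde{E}$ by the Weil correspondence''---has no meaning for \ref{T2}$\times$\ref{T2}, and the ``analogous growth argument'' you defer to must in fact carry the weight of showing $H=\widetilde{H}$, $|A|=1$, $\tau=\widetilde{\tau}$, and the compatibility of the characters $\alpha,\beta$ with $\widetilde{\alpha},\widetilde{\beta}$ (the paper's Case 5, Steps 1--6, using the Appel--Humbert cocycles and repeated maximum-principle arguments on expressions like $|\eta_i(z)|^2e^{\pi(\widetilde{H}|A|^2\pm H)|z|^2+\cdots}$). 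Finally, your separation of \ref{T2} from \ref{T3} by the abstract structure of $\mathcal{O}(X)$ is plausible but unproved (it would require an actual biholomorphism invariant such as the transcendence degree of $\mathcal{O}(X)$, and must also handle the case where $\mathcal{O}(X)$ of a \ref{T3} space is just the constants); the paper instead excludes \ref{T2}$\times$\ref{T3} directly by showing the linear Taylor coefficients $\xi_1,\lambda_1,\eta_1$ of any equivariant lift would all vanish, contradicting invertibility of the Jacobian.
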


Our next goal is to construct a graded linear structure of $\mathcal{O}(X)$. In view of Proposition \ref{res_1_intro} (and Proposition \ref{Xfunction}), a natural idea is to consider a notion of holomorphic functions of polynomial type on $X$. Precisely, we say that any given $f \in \mathcal{O}(X)$ is of \emph{polynomial type} if the corresponding Taylor series in one of (\ref{taylor1}), (\ref{taylor2}), and (\ref{taylor3}) terminates after finite terms. One may check that this definition is independent of the choice of local coordinates in (\ref{taylor1}), or the choice of lifts in (\ref{taylor2}), and (\ref{taylor3}). In the meantime, we may introduce a natural Hermitian metric $g$ on $X$ and a corresponding subspace $\mathcal{O}_d(X, g) \subset \mathcal{O}(X)$.  Let $p$ be a fixed point on a complete Hermitian manifold $(X, g)$, and $d_{g}$ the distance function with respect to $g$. For any real number $d>0$, we define
\begin{equation}
   \mathcal{O}_d(X, g)=\{f \in \mathcal{O}(X)\ |\ \exists\ C(d, f)>0,\  |f(q)| \leq C(d_g(q, p)+1)^{d}\ \text{for all}\ q \in X\}.   \label{polyspaced}
\end{equation}
We construct special Hermitian metrics on $X$ such that the corresponding $\mathcal{O}_d(X, g)$ meshes well with holomorphic functions of polynomial type.

\begin{theorem}\label{Hermipoly}
Let $X$ be the total space of any rank $2$ bundle $E$ of degree zero over an elliptic curve $\Sigma$ defined in (\ref{ecurve}). There exists a complete Hermitian metric $g$ on $X$ which is of flat Chern-Ricci curvature and Gauduchon in the sense that $\sqrt{-1}\partial \bar{\partial} (\omega^2)=0$. In case that $E$ is of \ref{T3}, we get a family of such Hermitian metrics which are quasi-isometric to each other. Moreover, $\mathcal{O}_d(X, g)$ defined in (\ref{polyspaced}) consists of all $f \in \mathcal{O}(X)$ whose corresponding power series in one of (\ref{taylor1}), (\ref{taylor2}), and (\ref{taylor3}) are finite and of highest degree $\leq \lfloor d \rfloor$. Here $\lfloor c \rfloor$ denotes the greatest integer $\leq c$. Moreover, $g$ is  K\"ahler if and only if $E$ is of \ref{T1}.
\end{theorem}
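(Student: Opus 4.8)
The plan is to construct the metric $g$ explicitly on each of the three types, then verify the required curvature and function-theoretic properties type by type. The natural starting point is the flat metric $\sqrt{-1}(dz\wedge d\bar z + d\xi\wedge d\bar\xi)$ on the fibers of the universal cover $\mathbb{C}^2$ (or $\mathbb{C}^3$ for Type III, where we quotient $\mathbb{C}\times\mathbb{C}^2$); the obstacle is that the transition functions of $E$ are not unitary in general, so this Euclidean fiber metric does not descend. The key idea is to replace the fiber metric by a Hermitian form $h(z)$ on the fibers that is invariant under the deck transformations, i.e. satisfies the cocycle compatibility $h(\gamma_k z) = (\rho(\gamma_k)^{-1})^* h(z) \rho(\gamma_k)^{-1}$, and then set $\omega = \sqrt{-1}(dz\wedge d\bar z + h_{i\bar j}(z)\,d\xi^i\wedge d\overline{\xi^j})$. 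For \ref{T1} the $\rho(\gamma_k)$ are unitary (diagonal of modulus one), so the constant Euclidean metric already works and is flat Kähler. For \ref{T2} and \ref{T3} I expect $h$ to be a nonconstant positive Hermitian matrix built from the defining data ($b_1,b_2$ in (\ref{gen-rep}), or the degrees in \ref{T2}); for instance in \ref{T3} one takes $h$ depending on $\operatorname{Im} z$ to absorb the unipotent part $\begin{pmatrix}1&b_k\\0&1\end{pmatrix}$.

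**Next I would verify the three metric properties.** First, completeness: since the base $\Sigma$ is compact and the fibers are copies of $\mathbb{C}^r$ with a metric comparable to a (possibly $z$-weighted) Euclidean metric, distances along fibers grow linearly, and one checks that Cauchy sequences converge — the weight $h(z)$ is bounded on the compact base direction so completeness reduces to completeness of $\mathbb{C}$ with a flat-type metric. Second, flat Chern-Ricci curvature: the Chern-Ricci form is $-\sqrt{-1}\partial\bar\partial \log\det(g_{i\bar j})$, so it suffices to arrange that $\det(g_{i\bar j})$ is the modulus-squared of a nowhere-vanishing holomorphic function (equivalently $\log\det g$ is pluriharmonic). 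Because the base block contributes $1$ and the fiber block is $\det h(z)$, one reduces to checking $\partial\bar\partial\log\det h = 0$; for the explicit $h$ chosen this should be a short computation, and this is the condition that dictates the precise form of $h$. Third, the Gauduchon condition $\sqrt{-1}\partial\bar\partial(\omega^2)=0$: expand $\omega^2$ and use that $\omega$ has the product-type structure $\omega_{\mathrm{base}} + \omega_{\mathrm{fiber}}$ where the base part is closed; the cross terms and $\partial\bar\partial$ of $\det$-type quantities vanish for the same pluriharmonicity reason.

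**The function-theoretic conclusion** is where the construction must be reconciled with the earlier characterization of $\mathcal{O}(X)$. Having the explicit $h(z)$, the distance $d_g(q,p)$ from a basepoint is comparable to the Euclidean norm of the fiber coordinate(s) (with the $z$-dependent weight), up to a bounded base contribution. A holomorphic $f\in\mathcal{O}(X)$ lifts to one of the Taylor series (\ref{taylor1}), (\ref{taylor2}), (\ref{taylor3}) in the fiber variables with coefficients that are (quasi-)periodic functions of $z$; the point is that each monomial of fiber-degree $m$ grows like $d_g^m$ along the fiber, while the base-direction coefficients are bounded since $z$ ranges over the compact $\Sigma$. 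Hence $f\in\mathcal{O}_d(X,g)$ forces the series to terminate at degree $\lfloor d\rfloor$, and conversely any such finite series satisfies the polynomial bound. The two directions are: (i) a growth \emph{lower} bound showing an infinite or too-high-degree series must exceed $(d_g+1)^d$, proved by isolating the top monomial along a ray in the fiber; and (ii) the routine \emph{upper} bound for finite series. I expect step (i) — controlling the top-degree term against possible cancellation from lower terms and the $z$-dependence — to be the main technical obstacle, handled by evaluating along fiber rays where $z$ is fixed so the $z$-dependent coefficients are frozen and the monomial growth is clean. Finally, the K\"ahler dichotomy follows by noting $d\omega=0$ forces $h(z)$ constant and the transition functions unitary, which by Proposition \ref{res_1_intro}(3) happens exactly for \ref{T1}.
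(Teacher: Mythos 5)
Your construction breaks down in the \ref{T2} case, and this is the heart of the theorem. You propose $\omega=\sqrt{-1}\bigl(dz\wedge d\bar z+h_{i\bar j}(z)\,d\xi^i\wedge d\overline{\xi^j}\bigr)$ where $h$ satisfies the cocycle compatibility making the \emph{function} $h_{i\bar j}\xi^i\overline{\xi^j}$ invariant. But for $E=L(H,\alpha)\oplus L(-H,\beta)$ with $H\neq 0$ the deck transformations are $(z,\xi_1)\mapsto(z+\gamma,\;\alpha(\gamma)e^{\pi Hz\bar\gamma+\frac{\pi}{2}H|\gamma|^2}\xi_1)$, so $d\xi_1$ pulls back to $e_\gamma(z)\bigl(d\xi_1+\pi H\bar\gamma\,\xi_1\,dz\bigr)$: the $(1,1)$-\emph{form} $h_{1\bar 1}(z)\,d\xi_1\wedge d\overline{\xi_1}$ acquires $dz$-cross terms under the action and does not descend to $X$, no matter how $h$ is chosen. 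One is forced to replace $d\xi_i$ by the covariant differentials, which is exactly what the paper's metric (\ref{HmetricT2g}), built from $e^{\pm\pi H|z|^2}\,\partial\bigl(e^{\mp\pi H|z|^2}\xi_i\bigr)$, does; the resulting $g$ has $g_{z\bar\xi_i}\neq 0$ and a $g_{z\bar z}$ containing the term $e^{-\pi H|z|^2}(\pi H)^2|z|^2|\xi_1|^2+e^{\pi H|z|^2}(\pi H)^2|z|^2|\xi_2|^2$. These cross terms are not a cosmetic difference: they are precisely what makes each fiber totally geodesic with Euclidean induced metric, so that $|\xi_i|$ grows \emph{linearly} in the distance to the zero section. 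With a genuinely product-type metric ($g_{z\bar z}\equiv 1$) a path could detour in the $z$-direction to where the fiber weight is exponentially small and reach $|\xi_1|=R$ at cost only $O(\sqrt{\ln R})$, which would destroy the claimed identification of $\mathcal{O}_d(X,g)$ with polynomials of degree $\leq\lfloor d\rfloor$. Your completeness and growth arguments (``the weight is bounded on the compact base, distances along fibers grow linearly'') silently assume the product structure and therefore do not apply to any metric that actually lives on $X$.

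Two further points. First, even for the correct metric the linear-growth statement is not automatic: the paper proves the fiber over $z=0$ is totally geodesic by inspecting the Christoffel symbols, and then transports this to every other fiber via the explicit bundle automorphism $\Phi(z,\xi_1,\xi_2)=(z+B,e^{\pi H\overline{B}z}\xi_1,e^{-\pi H\overline{B}z}\xi_2)$; some such argument is needed and is absent from your outline. Second, your sketch omits the claim that in the \ref{T3} case one obtains a \emph{family} of such metrics, quasi-isometric to one another (the paper's Remark \ref{quasirem}, using the bundle isomorphisms of Lemma \ref{isobundle}). For \ref{T1} and, after reduction to constant transition matrices, for \ref{T3} your ansatz is essentially the paper's (there $h(z)$ can be taken as $\begin{pmatrix}1&-\bar z\\-z&1+|z|^2\end{pmatrix}$ and $\det g\equiv 1$ gives Chern--Ricci flatness as you say), but the \ref{T2} construction must be redone with the connection-corrected coframe before the curvature, completeness, and growth claims can be verified.
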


It is known from From Calabi's work \cite{Calabi1} that the total space of any Hermitian vector bundle over a compact K\"ahler manifold admits a complete K\"ahler metric. Let $E$ be the indecomposable rank $2$ bundle defined in (\ref{Edef}). There is a natural choice of Hermitian metric on $E$ introduced by Paun {\cite[p.53]{Paun2001}}. In Proposition \ref{holoCala}, we show that any holomorphic function on its total space $X$ grows with the Hadamard order $\geq 2$ with respect to any corresponding K\"ahler metric of Calabi type on $X$. Therefore, Theorem \ref{Hermipoly} shows that function theory in Hermitian geometry might be more `natural' than that in K\"ahler geometry in some cases. In the meantime, Hermitian metrics constructed in Theorem \ref{Hermipoly} could be viewed as non-K\"ahler Calabi-Yau metrics on a natural class of noncompact manifolds. We refer the reader to \cite{Tosatti} for a nice introduction on compact non-K\"ahler Calabi-Yau manifolds.

From now on, we consider some characterization of complete K\"ahler metrics with nonnegative (holomorphic) bisectional curvature ($BI \geq 0$ for short) on the total space of a line bundle over an elliptic curve or a compact complex torus.

\begin{theorem}\label{res_3_intro}

\begin{enumerate}[label=(\arabic*)]
        \item  Let $X$ be the total space of a line bundle $L(H, \alpha)$ (defined in (\ref{AHaction_intro})) over the elliptic curve $\Sigma$ in (\ref{ecurve}). Then $X$ admits a complete K\"ahler metric $\omega$ with $BI \geq 0$ if and only if $H=0$. Moreover, assume that $\omega$ is non-flat. Let $\alpha(1)=e^{2\pi i \theta_1}$ and $\alpha(\tau)=e^{2\pi i \theta_2}$ for some $\theta_1, \theta_2 \in [0, 1)$. Then the following holds.
       \begin{enumerate} [label=(\roman*)]
       \item If either of $\theta_1$ and $\theta_2$ is irrational, then there exist
       \begin{itemize}
           \item  a constant $C>0$.
           \item  a smooth positive function $\widetilde{u}(\xi)$ on $\mathbb{C}$ so that $\ln \widetilde{u}$ is superharmonic.
           \item  $\widetilde{u}(\xi)$ is rotationally symmetric as $\widetilde{u}(\xi)=\widetilde{u}(|\xi|)$ for any $\xi \in \mathbb{C}$
       \end{itemize}
       such that
     \begin{equation}
    \omega= C dz \wedge d\overline{z} +\widetilde{u}(|\xi|) d\xi \wedge d \overline{\xi}.  \label{ir_metric_intro}
     \end{equation}
    \item  If both $\theta_1$ and $\theta_2$ are rational, let $k$ denote the minimal positive integer so that $k\theta_1$ and $k\theta_2$ are integers. Then there exist
    \begin{itemize}
         \item  a constant $C>0$.
        \item  a holomorphic function $h(\xi)=\widehat{h}(\xi^k)$ for an entire function $\widehat{h}$ on $\mathbb{C}$.
        \item  a smooth positive function $\widetilde{u}(\xi)$ on $\mathbb{C}$ so that $\ln \widetilde{u}$ is superharmonic.
        \item  $\widetilde{u}(\xi)$ is $\mathbb{Z}_k$-invariant in the sense that $\widetilde{u}(\xi)=\widetilde{u}(e^{\frac{2\pi i}{k}}\xi)$ for any $\xi \in \mathbb{C}$
    \end{itemize}
    such that
   \begin{equation}
    \omega= C d(z+ h(\xi)) \wedge d(\overline{z}+ \overline{h(\xi)}) +\widetilde{u}(\xi) d\xi \wedge d \overline{\xi}.    \label{r_metric_intro}
   \end{equation}
      \end{enumerate}
      Here in both (\ref{ir_metric_intro}) and (\ref{r_metric_intro}), $(z, \xi)$ denote the corresponding (local) coordinates on $X$ for those on $\mathbb{C}^2$ defined in (\ref{AHaction_intro}).

        \item   Let $n\geq 3$ be an integer and $L(H,\alpha)$ a line bundle over a compact complex torus $M$ of complex dimension $n-1$, see (\ref{Hform1}) to (\ref{AHaction}) for a definition of $L(H,\alpha)$ and $M$. Assume that its total space $X$ admits a complete K\"ahler metric $g$ with $BI \geq 0$ so that the lifting metric $\widetilde{g}$ on the universal covering $\widetilde{X}$ satisfies $\widetilde{g}=g_e+g_0$. Here $g_e$ is the standard Euclidean metric on $\mathbb{C}^{n-1}$, and $g_0$ is a complete K\"ahler metric with nonnegative Gauss curvature on $\mathbb{C}$. Then $H=0$.
    \end{enumerate}
\end{theorem}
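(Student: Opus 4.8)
The plan is to prove both parts by lifting a candidate metric to the universal cover $\widetilde{X}=\mathbb{C}^2$ (resp. $\mathbb{C}^n$) and analyzing how the deck group $\pi_1(\Sigma)$ (resp. $\pi_1(M)$) can act by holomorphic isometries of a complete K\"ahler metric with $BI\ge 0$. The ``if'' direction is immediate: when $H=0$ the action (\ref{AHaction_intro}) reduces to $(z,\xi)\mapsto(z+\gamma,\alpha(\gamma)\xi)$ with $|\alpha(\gamma)|=1$, so the flat Euclidean metric $\sqrt{-1}(dz\wedge d\overline z+d\xi\wedge d\overline\xi)$ is invariant and descends to a complete flat K\"ahler metric on $X$, which has $BI\ge 0$. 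For the converse I would lift a given complete K\"ahler $\omega$ with $BI\ge 0$ to a complete K\"ahler $\widetilde\omega$ on $\mathbb{C}^2$ with $BI\ge 0$, invariant under the deck transformations $\gamma_1:(z,\xi)\mapsto(z+1,c_1(z)\xi)$ and $\gamma_2:(z,\xi)\mapsto(z+\tau,c_2(z)\xi)$, where $c_1(z)=\alpha(\gamma_1)e^{\pi H z+\frac{\pi}{2}H}$ and $c_2(z)=\alpha(\gamma_2)e^{\pi H\overline\tau z+\frac{\pi}{2}H|\tau|^2}$. Everything hinges on the $z$-dependence of the fiber factors $c_k$, which is present precisely when $H\ne 0$.

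The structural input is the de Rham decomposition of $(\mathbb{C}^2,\widetilde\omega)$, which for a K\"ahler metric is simultaneously holomorphic and isometric. Since $\mathbb{C}^2$ is contractible, no compact factor can occur, and using the structure theory of complete K\"ahler surfaces with $BI\ge 0$ (de Rham together with Mok-type uniformization, or the compact zero-section obstruction below) I would reduce to the non-flat case $\widetilde X=\mathbb{C}_w\times M'$, where the first factor is flat $\mathbb{C}$ and $M'\cong\mathbb{C}$ is a complete Riemann surface with Gauss curvature $\ge 0$; the flat case $\widetilde\omega$ flat is handled separately by noting that holomorphic isometries of a flat metric are affine whereas $\gamma_k$ is non-affine once $H\ne 0$. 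In the split case, the zero section $\{\xi=0\}$ is deck-invariant and descends to the compact curve $\Sigma$; since $M'$ is non-flat its isometries fix a common center, so the $M'$-distance to that center descends to $\Sigma$ and is bounded, forcing the $M'$-coordinate $\eta$ to be constant along $\{\xi=0\}$. Hence $\eta=\xi\,v(z,\xi)$ with $v(z,0)=e^{\phi(z)}$ nowhere zero.

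The core of the argument is then a cocycle computation. As the $\gamma_k$ preserve the product, $\eta\circ\gamma_k$ is a function of $\eta$ alone fixing $\eta=0$; matching first-order terms in $\xi$ gives $c_k(z)=\Psi_k'(0)\,e^{\phi(z)-\phi(z+\gamma_k)}$, whence
\begin{equation}
\phi(z)-\phi(z+1)=\pi H z+c,\qquad \phi(z)-\phi(z+\tau)=\pi H\overline\tau\,z+c'. \nonumber
\end{equation}
Imposing the compatibility condition coming from $\gamma_1\gamma_2=\gamma_2\gamma_1$, i.e. comparing $\phi(z+1+\tau)$ evaluated in the two orders, forces $H(\tau-\overline\tau)=0$; since $\operatorname{Im}\tau>0$ this gives $H=0$. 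With $H=0$ the action reduces to $(z,\xi)\mapsto(z+\gamma,e^{2\pi i\theta_k}\xi)$, and invariance of the fiber metric $\widetilde u\,d\xi\wedge d\overline\xi$ under the rotations $\xi\mapsto e^{2\pi i\theta_k}\xi$ yields the dichotomy: if some $\theta_k$ is irrational the generated rotations are dense in $S^1$ and $\widetilde u=\widetilde u(|\xi|)$ (form (\ref{ir_metric_intro})), while if both are rational the rotations generate $\mathbb{Z}_k$ and $\widetilde u$ is $\mathbb{Z}_k$-invariant (form (\ref{r_metric_intro})). Finally, writing the flat coordinate as $w=z+h(\xi)$, invariance forces $h(e^{2\pi i\theta_k}\xi)-h(\xi)$ to be constant; expanding in powers of $\xi$ kills every coefficient in the irrational case ($h\equiv 0$, so $w=z$) and leaves only the $\xi^k$-multiples in the rational case ($h(\xi)=\widehat h(\xi^k)$). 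The requirement that $\ln\widetilde u$ be superharmonic is exactly $K_{M'}\ge 0$, which is equivalent to $BI(\widetilde\omega)\ge 0$ for such a product.

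For Part (2) with $n\ge 3$ the product splitting $\widetilde g=g_e+g_0$ is taken as a hypothesis, so I would run the identical first-order and cocycle analysis: the fiber factor $e^{\pi H\overline\gamma z+\cdots}$ must again be absorbed by a $g_0$-isometry fixing the center together with the affine $g_e$-structure, producing difference relations $\phi(z)-\phi(z+\gamma)=\pi H\overline\gamma\,z+\mathrm{const}$ for every $\gamma$ in the rank-$2(n-1)$ lattice $\Lambda$ of $M$. The commutation conditions across pairs of generators now force $\operatorname{Im}H(\gamma,\gamma')=0$ on $\Lambda\times\Lambda$, and since $H$ is Hermitian this gives $H=0$. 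The main obstacle throughout is the splitting step in Part (1): ruling out an irreducible, non-flat, complete K\"ahler surface structure on $\mathbb{C}^2$ with $BI\ge 0$ admitting the required $\mathbb{Z}^2$-action. I expect to dispatch this using the compact zero section $\Sigma$, whose induced metric is flat by Gauss--Bonnet (the ambient holomorphic sectional curvature being $\ge 0$ and $\chi(\Sigma)=0$), which contradicts irreducibility via the Gauss equation and the strong maximum principle; it is precisely this low-dimensional exclusion that is unavailable for $n\ge 3$, which is why the splitting must be assumed there.
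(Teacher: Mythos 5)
Your global architecture (lift to the universal cover, split off a flat $\mathbb{C}$-factor, then exploit the deck action) matches the paper's, but the crucial splitting step has a genuine gap. Your two proposed mechanisms both fail. First, de Rham decomposition plus ``Mok-type uniformization'' cannot by itself rule out an irreducible, non-flat, complete K\"ahler metric with $BI\geq 0$ on $\mathbb{C}^2$ (e.g.\ $U(2)$-invariant metrics of positive bisectional curvature exist), so the flat factor must be extracted from the presence of the compact zero section. Second, your Gauss--Bonnet argument for that extraction is incorrect: for a compact complex curve the Gauss equation reads $K_\Sigma = H_{\mathrm{amb}} - |\sigma|^2$, with the second fundamental form entering with a \emph{negative} sign, so $\chi(\Sigma)=0$ only yields $\int_\Sigma H_{\mathrm{amb}} = \int_\Sigma |\sigma|^2$; both sides can be strictly positive, and no pointwise flatness of the induced metric follows. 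The paper instead obtains the splitting via Wu's theorem that the Busemann function $\mathcal{B}$ is Lipschitz plurisubharmonic together with Ni--Tam's heat flow: the smoothed solution $v(\cdot,t)$ is PSH with bounded gradient, it cannot be strictly PSH precisely because it is constant on the compact zero section, and \cite[Lemma 4.1]{NT2003} then gives either a pluriharmonic Busemann function (parallel gradient, hence splitting) or a nontrivial parallel null distribution of the complex Hessian, again splitting off a flat $\mathbb{C}$. Relatedly, your disposal of the flat case is invalid: if $\widetilde{\omega}=F^{\ast}g_e$ for a non-affine $F\in\operatorname{Aut}(\mathbb{C}^2)$, the deck maps are isometries of $\widetilde{\omega}$ that are affine only in the $F$-coordinates, so ``$\gamma_k$ is non-affine in $(z,\xi)$ once $H\neq 0$'' is no contradiction; the paper does the actual work here (Proposition \ref{rankkflat}) through the volume-form identity $\det g=|h(z)|^2$, boundedness of $|h(z)|^2e^{\pi H(z,z)}$, and the Liouville-type Lemma \ref{nonzero}.

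Once a splitting is granted, your cocycle computation is correct and is a genuinely different route from the paper's: the first-order relation $c_k(z)=\Psi_k'(0)\,e^{\phi(z)-\phi(z+\gamma_k)}$ together with commutativity of the lattice forces $H(\tau-\overline{\tau})=0$, hence $H=0$; and in Part (2) the same trick gives $\operatorname{Im}H=0$ on $\Lambda\times\Lambda$, which kills a Hermitian form since $\Lambda$ spans over $\mathbb{R}$. This is more structural than the paper's mechanism (Lemma \ref{BIintro_part2}: boundedness of $u|h|^2e^{\pi H(z,z)}$ and of the $d\xi\wedge d\overline{\xi}$-coefficient along complex lines, plus the maximum principle and nondegeneracy of the Jacobian), and it would be a nice alternative if completed. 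But two of its supporting claims are asserted rather than proved: (i) that the holomorphic isometries of the non-flat factor fix a common center --- plausible, since such isometries of $(\mathbb{C},u\,|d\xi|^2)$ are affine and a translation-invariant superharmonic $\ln u$ leads to flatness or incompleteness, but this needs an argument, as does the constancy of $\eta$ on the lifted zero section that depends on it; and (ii) the normal forms (\ref{ir_metric_intro})--(\ref{r_metric_intro}): you simply ``write the flat coordinate as $w=z+h(\xi)$'' without justifying that $\partial w/\partial z$ is a nonzero constant. The paper derives exactly this in its Step 2 from the automorphism analysis (boundedness in $z$ forcing $f_{2,1}$ constant and $f_{1,1}=C_1(\xi)h(z,\xi)$, then the K\"ahler condition $\partial_\xi g_{z\bar z}=\partial_z g_{\xi\bar z}$ making $C(\xi)|C_1(\xi)|^2$ constant), and only then reads off the rotational/$\mathbb{Z}_k$-invariance dichotomy, which is the one part your sketch reproduces faithfully.
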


\begin{remark}
Part (1) of Theorem \ref{res_3_intro} shows that both the geometric and arithmetic properties of $L(H, \alpha)$ impose strong constraints on complete K\"ahler metrics with $BI \geq 0$ on the total space $X$. In part (2) of Theorem \ref{res_3_intro}, we assume that the universal covering of $(X, g)$ splits with a complex Euclidean factor of diemension $n-1$. It is possible that this assumption automatically holds in this setting. The intuition is that the rank of the fundamental group of the compact torus $M$ might contribute to the dimension of flat Euclidean factor on $\widetilde{X}$. For example, if we assume ``nonnegative Riemann sectional curvature" instead of ``$BI \geq 0$", the universal covering splits out a flat $\mathbb{C}^{n-1}$ by the proof of a result of Zheng \cite[Corollary 5.1]{NT2003}.
\end{remark}

It is natural to consider Question \ref{queintro} for vector bundles with nonzero degrees over an elliptic curve. What about function theory on the total spaces? Are there any complete K\"ahler metrics with $BI \geq 0$ on the total spaces? In this regard, we show (in Proposition \ref{n2d-1noholo}) that the total space of some indecomposable rank $n$ bundle of degree $-n <d <0$ on an elliptic curve is holomorphically convex.

\subsection{Further discussions}

For any fixed K\"ahler manifold $(M, g)$, we define its Ricci rank as the maximum rank of the Ricci curvature over $M$. Wu-Zheng asks the following question in \cite[Conjecture on p.307]{WZ2003}: Let $(M^n, g)$ be a complete K\"ahler manifold with $BI \geq 0$ and its Ricci rank $n-k$. Then its universal covering $(\widetilde{M}, \widetilde{g})$ is holomorphically isometric to $(N^{n-k} \times \mathbb{C}^k, h+g_{e})$ where $h$ is a complete K\"ahler metric on $N$ with $BI \geq 0$, and $g_e$ is the Euclidean metric on $\mathbb{C}^k$. In case that the curvature of $(M, g)$ is bounded, a stronger structure theorem on its universal cover was obtained by Cao-Chen-Zhu \cite[Theorem 2.8]{CCZ2008} using the K\"ahler-Ricci flow.

Motivated by structural theorems of K\"ahler manifolds with $BI \geq 0$ due to Howard-Smyth-Wu \cite{HSW1981}, Mok \cite{Mok1988}, Takayama \cite[Theorem 1.6]{Taka1998}, Wu-Zheng \cite{WZ2003}, Zheng \cite[Corollary 5.1]{NT2003}, Ni-Tam \cite[Theorem 5.1]{NT2003}, Cao-Chen-Zhu \cite{CCZ2008}, and Chau-Tam \cite[Corollary 3.1]{CT2011}, we propose the following question.

\begin{question}\label{Biopen}
    Let $(M^n, g)$ be a complete K\"ahler manifold with $BI(g) \geq 0$ and its Ricci rank be $n-k$ where $1 \leq k \leq n$. Then there exists a finite covering $(\widehat{M}, \widehat{g})$ with a locally trivial holomorphic fibration $\rho: \widehat{M} \rightarrow M_1$ where $M_1$ is complete flat K\"ahler manifold with its dimension $k$. Moreover, each fiber of $\rho$ with the induced metric from $\widehat{g}$ is a simply-connected holomorphically isometric product manifold $M_2$ with each factor being one of the following:
       \begin{enumerate} [label=(\arabic*)]
        \item  a simply connected complete noncompact K\"ahler manifold $(N_1, g_1)$ so that $N_1$ admits another complete K\"ahler metric with $BI>0$. According to Yau's uniformization conjecture \cite{Yau1991}, $N_1$ is expected to be biholomorphic to a complex Euclidean space.

       \item A product of complex projective spaces each of which is endowed with a K\"ahler metric with $BI \geq 0$.

        \item A product of compact Hermitian symmetric spaces of rank $\geq 2$ each of which is endowed with its canonical K\"ahler-Einstein metric.
    \end{enumerate}
\end{question}

\begin{remark}
Theorem \ref{res_3_intro} provides partial support for Question \ref{Biopen}. Notably, the factor $(N_1, g_1)$ in (1) of Question \ref{Biopen} might not satisfy $BI \geq 0$. Indeed, we construct an explicit example (see Example \ref{negexam}) so that it can be non-positively curved! This phenomenon contrasts with the compact case studied in \cite{HSW1981} and \cite{Mok1988} where only (2) and (3) in Question \ref{Biopen} might appear and the corresponding induced metric satisfies $BI \geq 0$.
\end{remark}

The complete flat K\"ahler manifold $M_1$ in Question \ref{Biopen} can be written as $\mathbb{C}^{k} / G$ where $G$ is a fixed-point-free discrete subgroup of $\operatorname{I}_h(\mathbb{C}^k)$. Here $\operatorname{I}_h(\mathbb{C}^k)$ denotes the holomorphic isometry group of $\mathbb{C}^{k}$. When $G$ is not trivial, we have the following three cases. Recall that a finite cover has been taken into account in Question \ref{Biopen}.
\begin{enumerate} [label=(\roman*)]
       \item If $G$ is a lattice of real rank $2k$, then $M_1$ is a compact complex torus $T_{\mathbb{C}}^{k}$.

       \item If $G$ is a lattice of real rank $1 \leq l <2k$, By a theorem of Remmert and Morimoto \cite[Theorem 3.2]{Morimoto1966}, $M_1$ is holomorphically isomorphic to a product of $\mathbb{C}^{l_1}$, $(\mathbb{C}^{\ast})^{l_2}$, and a toroidal group. Recall a toroidal group is an abelian complex Lie group on which every holomorphic function is constant. In general, any toroidal group is a holomorphic fiber bundle onto a complex torus with its fiber being a product of copies of $\mathbb{C}^{\ast}$, see \cite{YK2001} for a general introduction of toroidal groups. Cousin \cite{Cousin} constructed the first example of toroidal groups. For example, let $\Gamma$ be a lattice in $\mathbb{C}^2$ generated by $(1, 0), (0, 1)$, and $(i, ia)$ where $a$ is irrational. We may check by power series that $\mathbb{C}^2/\Gamma$ is a toroidal group; see {\cite[p.262]{Morimoto1965}} for further constructions of toroidal groups.

       \item  If $G$ is a semi-direct product associated with a lattice $\Gamma$ and some representation $\rho: \Gamma \rightarrow U(r)$, $M_1$ is related to a holomorphic fiber bundle over $\mathbb{C}^{k-r}/\Gamma$ with fiber being a product of $\mathbb{C}^{r_1}$ and $(\mathbb{C}^{\ast})^{r_2}$ for two integers $r_1$ and $r_2$ with $r_1+r_2=r$.
\end{enumerate}
     However, it is unclear to the authors whether Case (iii) has been fully classified. In view of this, we prove the following result.

\begin{proposition}\label{rankkflat}
    Let $E=\oplus_{p=1}^k L_p(H_p,\alpha_p)$ where each $L_p$ is a line bundle over a compact complex torus $M=\mathbb{C}^{n-k} / \Gamma$. Let $X$ be the total space of $E$. Then $X$ admits a complete flat K\"ahler metric if and only if $H_p=0$ for $p=1,\cdots,k$. In particular, if $k=1$, there exists a characterization (see (\ref{gexpress3})) of any complete flat K\"ahler metrics on $X$ in terms of $(z, \xi)$ which are the corresponding coordinates descended from those on $\mathbb{C}^2$ defined in (\ref{AHaction_intro}).
\end{proposition}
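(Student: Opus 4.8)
The plan is to prove the two directions separately and to locate the entire difficulty in the ``only if'' direction, where the point is to separate the individual Hermitian forms $H_p$ from the single flat structure on the total space. The ``if'' direction is routine: if $H_p=0$ for all $p$, then the deck action (\ref{AHaction}) reduces to $(z,\xi_1,\dots,\xi_k)\mapsto(z+\gamma,\alpha_1(\gamma)\xi_1,\dots,\alpha_k(\gamma)\xi_k)$ with each $|\alpha_p(\gamma)|=1$. Since this is a translation in the base variable $z$ composed with a unitary rotation in each fiber variable $\xi_p$, the standard Euclidean K\"ahler form $\sqrt{-1}(\sum_j dz_j\wedge d\overline{z}_j+\sum_p d\xi_p\wedge d\overline{\xi}_p)$ on $\mathbb{C}^n$ is invariant and descends to a complete flat K\"ahler metric on $X$.

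For the converse, assume $X=\mathbb{C}^n/\Gamma$ carries a complete flat K\"ahler metric $g$, where $\Gamma\cong\pi_1(M)$ acts freely and properly discontinuously through the automorphy transformations $T_\gamma$ of (\ref{AHaction}). First I would pass to the universal cover: the lift $\widetilde g$ is a complete simply connected flat K\"ahler metric on $\mathbb{C}^n$, so the parallel holomorphic coframe integrates to a holomorphic development map $\Phi\colon(\mathbb{C}^n,\widetilde g)\to(\mathbb{C}^n,g_e)$ which is a biholomorphic isometry. Each $T_\gamma$ is a $\widetilde g$-isometry, hence $A_\gamma:=\Phi\circ T_\gamma\circ\Phi^{-1}$ is a holomorphic isometry of $(\mathbb{C}^n,g_e)$, i.e.\ a complex affine unitary map $w\mapsto U_\gamma w+c_\gamma$ with $U_\gamma\in U(n)$. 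Differentiating $\Phi\circ T_\gamma=A_\gamma\circ\Phi$ and restricting to the compact, $\Gamma$-invariant zero section $\{\xi=0\}\cong M$, where $DT_\gamma$ is diagonal, $\operatorname{diag}(I_{n-k},f_\gamma^{(1)}(z),\dots,f_\gamma^{(k)}(z))$ with $f_\gamma^{(p)}$ the $p$-th automorphy factor, yields the cocycle relation $\Psi(z+\gamma)=U_\gamma\,\Psi(z)\,\operatorname{diag}(I,1/f_\gamma^{(1)},\dots,1/f_\gamma^{(k)})$ for the nowhere-degenerate matrix $\Psi(z):=D\Phi(z,0)\in GL(n,\mathbb{C})$.

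The heart of the argument extracts two scalar consequences from this matrix relation, using that $\Phi$ is biholomorphic so $\Psi$ has nowhere-vanishing determinant and columns. Write $\omega_{H_p}:=\sqrt{-1}\,\partial\overline\partial H_p(z,z)$ for the translation-invariant form on $M$ determined by $H_p$, and fix a flat Euclidean K\"ahler form $\omega_0$ on $M$ with $m=\dim M=n-k$. Taking determinants, $v:=\log|\det\Psi|^2$ is pluriharmonic and satisfies $v(z+\gamma)-v(z)=-2\pi\operatorname{Re}\big(\sum_pH_p(z,\gamma)\big)-\pi\sum_pH_p(\gamma,\gamma)$, so $v+\pi\sum_pH_p(z,z)$ descends to a global function on $M$; its $\sqrt{-1}\,\partial\overline\partial$ shows the invariant form $\sum_p\omega_{H_p}$ is exact, hence zero, since a translation-invariant form on a torus is harmonic. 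Taking instead the $(n-k+p)$-th column $\psi_p(z):=\Psi(z)e_{n-k+p}$, the same relation gives $|\psi_p(z+\gamma)|=|\psi_p(z)|/|f_\gamma^{(p)}(z)|$, so $u_p:=\log|\psi_p|^2$ is plurisubharmonic and $u_p+\pi H_p(z,z)$ descends to $M$. Integrating against $\omega_0^{\,m-1}$ (Stokes applied to a global function times a closed form) gives $\pi\int_M\omega_{H_p}\wedge\omega_0^{m-1}=-\int_M\sqrt{-1}\,\partial\overline\partial u_p\wedge\omega_0^{m-1}\le 0$ for each $p$.

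The main obstacle is exactly that the determinant only controls the sum $\sum_pH_p$ and not the individual $H_p$; I expect this to be where the genuine work lies, and it is resolved by combining the two computations. The determinant step gives $\sum_p\int_M\omega_{H_p}\wedge\omega_0^{m-1}=0$, while the column step gives each summand $\le0$, so every integral vanishes. Then $\int_M\sqrt{-1}\,\partial\overline\partial u_p\wedge\omega_0^{m-1}=0$ together with $\sqrt{-1}\,\partial\overline\partial u_p\ge0$ forces $\sqrt{-1}\,\partial\overline\partial u_p\equiv0$, whence $\omega_{H_p}=\tfrac1\pi\sqrt{-1}\,\partial\overline\partial(u_p+\pi H_p(z,z))$ is an exact translation-invariant form on $M$, hence zero, i.e.\ $H_p=0$ for every $p$. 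Finally, for $k=1$, having reduced to $H=0$ so that $\Gamma$ acts by $(z,\xi)\mapsto(z+\gamma,\alpha(\gamma)\xi)$, I would classify all $\Gamma$-invariant flat K\"ahler metrics by the same development-map analysis used for Theorem \ref{res_3_intro}(1): invariance under the base translations and the fiber rotation pins down the flat coordinates up to a shearing $z\mapsto z+h(\xi)$ and a symmetric fiber profile, producing the normal form (\ref{gexpress3}).
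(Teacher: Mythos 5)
Your proposal is correct, and it follows the same overall skeleton as the paper's proof: realize the flat metric as the pullback of the Euclidean metric under some $F\in\operatorname{Aut}(\mathbb{C}^n)$ (your development map $\Phi$ is exactly this), restrict to the zero section, and play the determinant (which only sees $\sum_p H_p$) against the individual fiber columns (which see each $H_p$ separately). Where you genuinely diverge is in the mechanism for extracting $H_p=0$ from the two invariance relations. The paper restricts the invariant quantities $|h(z)|^2e^{\pi\sum_p H_p(z,z)}$ and $\sum_j|f_{j,n-k+p}(z)|^2e^{\pi H_p(z,z)}$ to complex lines $t\mapsto tz_0$ and applies a one-variable Liouville-type lemma (Lemma \ref{nonzero}(1)) plus the maximum principle: the determinant gives $\sum_p H_p(z_0,z_0)=0$ pointwise, and $H_p(z_0,z_0)>0$ would force an entire column of the Jacobian to vanish along that line. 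You instead work cohomologically on the compact torus: pluriharmonicity of $\log|\det\Psi|^2$ makes $\sum_p\omega_{H_p}$ exact hence zero, plurisubharmonicity of $\log|\psi_p|^2$ gives $\int_M\omega_{H_p}\wedge\omega_0^{m-1}\le 0$ via Stokes, and the two together force $\sqrt{-1}\partial\bar\partial u_p\equiv 0$ and then $\omega_{H_p}=0$ as an exact invariant form. Both routes are sound; yours buys a cleaner global statement (no restriction to lines, no case analysis on the sign of $H_p(z_0,z_0)$) and makes transparent why the sum and the individual terms are controlled by different objects, at the modest cost of needing the final upgrade from $\operatorname{tr}_{\omega_0}H_p=0$ to $H_p=0$ via the vanishing of $\sqrt{-1}\partial\bar\partial u_p$ — a step you do correctly include. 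Your treatment of the $k=1$ normal form by deferring to the analysis behind (\ref{gexpress3}) matches the paper, which likewise establishes that characterization only in the proof of Theorem \ref{res_3_intro} rather than inside this proposition.
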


There has been important progress on the uniformization of complete K\"ahler manifolds with $BI \geq 0$ and Euclidean volume growth in recent years. We refer the reader to Chau-Tam \cite{CT2006}, Liu \cite{Liu2019}, Lee-Tam \cite{LT2020} for more information, as well as Chau-Tam \cite{CT2008}, Chen-Zhu \cite{CZ2018} and Lott \cite{Lott2021} for further interesting results without assuming Euclidean volume growth.

The structure of the paper is the following. In Section \ref{sec2} we apply Calabi's construction to study the growth of holomorphic functions on the total space of a vector bundle, proving Proposition \ref{holoCala}. In Section \ref{sec3} we study K\"ahler geometry of total spaces of line bundles over an elliptic curve, Theorem \ref{res_3_intro} and Proposition \ref{rankkflat} are proved. In Sections \ref{sec4} and \ref{sec5} we show a biholomorphic classification of total spaces of rank $2$ bundles with degree $0$ and studying their Hermitian geometry, proving Proposition \ref{res_1_intro}, Theorem \ref{res_2_intro}, and Theorem \ref{Hermipoly}. Finally, we discuss the function theory on total spaces of certain bundles with nonzero degree in Section \ref{sec6}.

\vskip 0.2cm

\noindent\textbf{Acknowledgments.} Bo Yang would like to thank Fangyang Zheng for his interest, Zhan Li and Wenfei Liu for helpful discussions on vector bundles over elliptic curves.

\section{Function theory via the Calabi method}\label{sec2}

In this section, we review Calabi's theorem that the total space of any holomorphic vector bundle over a compact K\"ahler manifold admits a complete K\"ahler metric, and apply it to study function theory on $X$ which is the total space of the vector bundle $E$ defined in (\ref{Edef}). The main result is Proposition \ref{holoCala} which gives a sharp estimate on the growth of holomorphic functions on $X$.

Let $X$ be the total space of $\pi: (E, h) \rightarrow M$ which is a Hermitian vector bundle over a complete K\"ahler manifold $(M, g)$. Calabi considered the K\"ahler metric $\widetilde{g}$ on $X$ in the form of
\begin{align}
\widetilde{\omega}=\pi^{\ast}\omega_g+\sqrt{-1}\partial\overline{\partial} u.  \label{Cmetric}
\end{align}
Here for any point $(p, v^{\mu} e_{\mu}) \in X$, $u$ is a smooth single-variable function on $t=h(v^{\mu} e_{\mu}, \overline{v^{\mu} e_{\mu}})$ which is to be determined. For simplicity, we call any K\"ahler metric is \emph{of the Calabi type} of it is in the form of (\ref{Cmetric}).

\begin{theorem}[Calabi {\cite[Theorem 3.2 on p.275]{Calabi1}}]\label{vecmetric}
Let $X$ be the total space of $E$ which is a holomorphic vector bundle over a complete K\"ahler manifold $(M, g)$. Assume $E$ admits a Hermitian metric
$h$ so that its Chern curvature is bounded from above, i.e. there exists some constant $C$ so that
\begin{align}
R(S, \overline{T}, v^{\alpha} e_{\alpha}, \overline{w^{\beta} e_{\beta}}) \leq  C g(S, \overline{T}) h(v^{\alpha} e_{\alpha}, \overline{w^{\beta} e_{\beta}})  \label{ubound}
\end{align}
for any $p \in Y$, $S, T \in T_{p}^{1, 0}M$, and $v^{\alpha} e_{\alpha}, w^{\beta} e_{\beta} \in \pi^{-1}(p)$. Then $X$ admits a complete K\"ahler metric of the Calabi type.
\end{theorem}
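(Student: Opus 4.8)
The plan is to build the potential $u=u(t)$ explicitly after reducing the positivity and completeness of $\widetilde{\omega}$ to a couple of elementary conditions on the single function $\psi(t):=t\,u'(t)$. First I would pass to the coframe adapted to the Chern connection of $(E,h)$. Writing $A^\alpha_\beta:=h^{\alpha\bar\gamma}\partial h_{\beta\bar\gamma}$ for the Chern connection matrix and $\theta^\alpha:=dv^\alpha+A^\alpha_\beta v^\beta$ for the associated vertical $(1,0)$-forms, a direct computation gives $\partial t=h_{\alpha\bar\beta}\bar v^\beta\theta^\alpha$ and $\bar\partial t=h_{\alpha\bar\beta}v^\alpha\overline{\theta^\beta}$, and hence
\begin{equation}
\sqrt{-1}\,\partial\bar\partial t=\sqrt{-1}\,h_{\alpha\bar\beta}\,\theta^\alpha\wedge\overline{\theta^\beta}-\sqrt{-1}\,R_{i\bar j\alpha\bar\beta}\,v^\alpha\bar v^\beta\,dz^i\wedge d\bar z^j,
\end{equation}
where $R$ is the Chern curvature of $(E,h)$. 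Since $\sqrt{-1}\,\partial\bar\partial u=u'\sqrt{-1}\,\partial\bar\partial t+u''\sqrt{-1}\,\partial t\wedge\bar\partial t$, and both $\sqrt{-1}\,h_{\alpha\bar\beta}\theta^\alpha\wedge\overline{\theta^\beta}$ and $\sqrt{-1}\,\partial t\wedge\bar\partial t$ involve only the vertical forms $\theta^\alpha$, the form $\widetilde{\omega}$ is block diagonal in the splitting $\{dz^i\}\cup\{\theta^\alpha\}$. I would then record the two blocks: a horizontal block $g_{i\bar j}-u'(t)R_{i\bar j\alpha\bar\beta}v^\alpha\bar v^\beta$ and a vertical block $u'(t)h_{\alpha\bar\beta}+u''(t)\,h_{\alpha\bar\gamma}\bar v^\gamma\,h_{\delta\bar\beta}v^\delta$, so that positivity of $\widetilde{\omega}$ reduces to positivity of each block separately.

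Next I would translate positivity into conditions on $\psi$. Diagonalizing the vertical block along the directions parallel and orthogonal to the section $v$ shows that it is positive definite exactly when $u'(t)>0$ and $u'(t)+t\,u''(t)=\psi'(t)>0$, i.e. when $\psi>0$ and $\psi$ is strictly increasing. For the horizontal block I would invoke the hypothesis (\ref{ubound}): as Hermitian forms $R_{i\bar j\alpha\bar\beta}v^\alpha\bar v^\beta\le C\,t\,g_{i\bar j}$, so for $u'\ge 0$
\begin{equation}
g_{i\bar j}-u'(t)R_{i\bar j\alpha\bar\beta}v^\alpha\bar v^\beta\ge\big(1-C\,\psi(t)\big)\,g_{i\bar j},
\end{equation}
which is positive once $C\psi(t)<1$. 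Thus $\widetilde{\omega}$ is a genuine Kähler metric as soon as $\psi$ is smooth, positive, strictly increasing, and bounded above by some $L<1/C$; note it is automatically closed, being the sum of the closed form $\pi^*\omega_g$ and an $\sqrt{-1}\,\partial\bar\partial$-exact term.

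Finally I would treat completeness. Restricting to a fiber $\cong\mathbb{C}^r$ and setting $t=r^2$ for the squared norm, the induced radial length element is $\sqrt{\psi'(t)}\,dr=\tfrac12\sqrt{\psi'(t)/t}\,dt$, so each fiber is complete toward infinity iff $\int_0^\infty\sqrt{\psi'(t)/t}\,dt=\infty$. Completeness in the base directions is automatic: the uniform bound $\psi\le L<1/C$ gives $\widetilde{\omega}\ge(1-CL)\,\pi^*\omega_g$ horizontally, so any curve diverging in $X$ whose projection escapes to infinity in $M$ has infinite length because $(M,g)$ is complete. The one genuine tension, which I expect to be the main obstacle, is that the horizontal bound forces $\psi$ to stay bounded (hence $\int_0^\infty\psi'\,dt<\infty$), while fiber completeness demands that $\psi'$ decay slowly enough that $\int_0^\infty\sqrt{\psi'/t}\,dt$ still diverges. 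I would resolve this by prescribing $\psi'(t)\sim\big(t(\ln t)^2\big)^{-1}$ for large $t$: then $\psi$ converges and can be kept below a chosen $L<1/C$, yet $\int^\infty\sqrt{\psi'(t)/t}\,dt\sim\int^\infty\frac{dt}{t\ln t}=\infty$. Setting $u'(t)=\psi(t)/t$ (finite at $t=0$ by L'Hôpital, so $u$ extends smoothly across the zero section) and integrating produces the desired complete Kähler metric of the Calabi type, completing the proof.
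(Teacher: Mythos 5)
Your proposal is correct and follows essentially the same route as the paper's review of Calabi's argument: the horizontal/vertical block decomposition is the paper's (\ref{Cmetric2}), the positivity conditions $u'>0$, $u'+tu''>0$ together with the curvature bound (\ref{ubound}) controlling the horizontal block are the same, and the completeness criterion is exactly the divergence of the integral (\ref{fiberdist}). Your prescription $\psi'(t)=(tu'(t))'\sim\big(t(\ln t)^2\big)^{-1}$ reproduces precisely the asymptotics of Calabi's explicit choice $u(t)=A\log(C+t)-B\log\log(C+t)$ in (\ref{uchoice}), for which $u'+tu''\sim B\,t^{-1}(\log(C+t))^{-2}$, so the two constructions coincide up to the closed form of the potential.
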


It is also mentioned on {\cite[p.272]{Calabi1}} that any K\"ahler metric in the form of (\ref{Cmetric}) satisfies that each fiber is totally geodesic. We also refer to \cite[Proposition 2.2 on p.2297]{HS2002} for a proof. For the sake of completeness, we review this fact in the special case of a rank $2$ bundle over an elliptic curve $\Sigma$. We also discuss the choice of the function $u$ in (\ref{Cmetric}). These two ingredients are crucial in Calabi's proof of Theorem \ref{vecmetric}.

For any $p \in \Sigma$, choose a holomorphic normal coordinate  $(z, v^1, v^2)$
center at $p$ such that $z(p)=0$ and $\partial_z h(e_{\alpha},\overline {e_{\beta}})=0$ at $p$.
Recall $t=h(v, \overline v)=v^{\alpha}\overline {v^{\beta}}h(e_{\alpha},\overline{e_{\beta}})$.
\begin{equation}
     \partial \bar{\partial} u(t)=u^{\prime}(t)\partial \bar{\partial} t +u^{\prime\prime}(t)\partial t \wedge \bar\partial t.
\end{equation}
Hence on $\pi^{-1}(p)$, the local components of $\widetilde{g}$ are
\begin{equation}
    \widetilde{g}=\begin{pmatrix}
        g_{z \bar z}+u'(t)t_{z\bar{z}} &0 &0 \\
        0 & u'+u''|v^1|^2 &u'' \overline{v^1}  v^2 \\
        0&u'' v^1\overline{v^2} & u'+u''|v^2|^2
    \end{pmatrix}.
\end{equation}
It follows that $\Gamma_{\alpha \beta}^w =\widetilde{g}^{w \bar l} \partial_{\alpha} \widetilde{g}_{\beta \bar l}=0$ holds along $\pi^{-1}(p)$. Therefore, the fiber $\pi^{-1}(p)$ is totally geodesic.

Next, we consider the growth of geodesics along a fixed fiber. Let $\widetilde{\omega}_p =\widetilde{\omega}|_{\pi^{-1}(p)}$ the induced metric along $\pi^{-1}(p)$. Then for $v=(v^1,0) \in \pi^{-1}(p)$, we may estimate the distance of $v$ to the zero section $\Sigma$ by
\begin{equation}
     d_{\widetilde{\omega}_p}(v,0)=\int_0^{|v|}\sqrt{u'(t)+tu''(t)}\frac{1}{2\sqrt t}\,dt.
    \label{fiberdist}
\end{equation}
Therefore, $\widetilde{\omega}$ is a complete K\"ahler metric if $u'(t)+tu''(t)>0$ and the integral in (\ref{fiberdist}) is divergent.
In \cite{Calabi1}, the following form of $u(t)$ is considered
\begin{equation}
    u(t)=A\log(C+t)-B\log \log (C+t),   \label{uchoice}
\end{equation}
where $C>0$ is a large constant and $A,B$ are constants to be determined.
\begin{align*}
 &u'(t)=\frac{1}{C+t}(A-\frac{B}{\log (C+t)}),  \\
 &u''(t)=-\frac{1}{(C+t)^2}(A-\frac{B}{\log (C+t)}-\frac{B}{(\log (C+t))^2}), \\
 &u'(t)+tu''(t)=\frac{1}{(C+t)^2}(\frac{Bt}{(\log(C+t))^2}+AC-\frac{BC}{\log (C+t)}).
\end{align*}
Let $A> \frac{B}{\log C}$. Then $u'(t)>0$ for $t>0$. Moreover, we have
\begin{equation}
    \sqrt{u'(t)+tu''(t)}\frac{1}{2\sqrt t}\ge \frac{B}{2(C+t)\log(C+t)}.
\end{equation}
In view of (\ref{fiberdist}), we conclude that the distance function along each fiber goes to infinity. To sum up, we show that the K\"ahler metric in the form of (\ref{Cmetric}) with $u$ of the form (\ref{uchoice}) is complete.

As an application of the Calabi method, we study the growth of holomorphic functions of the vector bundle $E$ defined in (\ref{Edef}). For simplicity, we assume $\tau=\sqrt{-1}$ in (\ref{Edef}), and consider the following hermitian metric on the trivial rank $2$ bundle over $\mathbb{C}$,
\begin{align}
    |(v^1, v^2)|_h^2=|v^1-(\operatorname{Im}z)v^2|^2+k|v^2|^2\ \text{for any}\ k>0.  \label{Hdef}
\end{align}
Such a family of Hermitian (fiber) metrics was introduced by Paun {\cite[p.53]{Paun2001}}. Note that (\ref{Hdef}) is invariant under the action of $(\Gamma, \rho)$ defined in (\ref{Edef}). Let $h$ denote the corresponding Hermitian metric on $E$ over the elliptic curve $\Sigma=\mathbb{C}/\Gamma$. According to the Calabi method, we may study K\"ahler metrics on $X$ in the form of (\ref{Cmetric}) where $\omega_0=\sqrt{-1}dz \wedge d\overline{z}$ is a flat metric on $\Sigma$. Let $z=x+\sqrt{-1} y$ and $t=h(v, \overline{v})=|v^1-yv^2|^2+k|v^2|^2$. Then:
\begin{align*}
    &h_{\alpha \bar \beta}=\begin{pmatrix}
        1 &-y
        \\-y & y^2+k
    \end{pmatrix},\ \ \
    h^{\alpha \bar \beta}=\frac{1}{k}\begin{pmatrix}
        y^2+k &y
        \\y & 1
    \end{pmatrix}.\\
    &\partial_z h_{\alpha \bar \beta}=\begin{pmatrix}
        0 &\frac{\sqrt{-1}}{2}
        \\ \frac{\sqrt{-1}}{2} & \frac{y}{\sqrt{-1}}
    \end{pmatrix},\  \
    \Gamma_{\alpha z}^{\beta}=h^{\beta \bar \delta} \partial_z h_{\alpha \bar \delta}=\frac{\sqrt{-1}}{2k}\begin{pmatrix}
        y & 1 \\
        k-y^2 & -y
    \end{pmatrix}.\\
    & \partial_{\bar z}\Gamma_{z \alpha}^{\beta}=\frac{1}{4k}\begin{pmatrix}
        1 & 0 \\
        -2y & -1
    \end{pmatrix},\ \
    R^E_{z \bar z \alpha \bar \beta}=-\partial_{\bar z}\Gamma_{\alpha z}^{\gamma} h_{\gamma \bar \beta}=-\frac{1}{4k}\begin{pmatrix}
        1 & -y \\
        -y & y^2-k
    \end{pmatrix}.
\end{align*}
According to \cite[p.274]{Calabi1}, we may rewrite (\ref{Cmetric}) as
\begin{align}
\omega_g=\sqrt{-1}\Big( (1-u'(t)R^E_{z \bar z \alpha \bar \beta}v^{\alpha}\overline{v^{\beta}}) dz \wedge d\bar z+(u''(t) h_{\alpha \bar{\xi}} \overline{v^{\xi}} h_{\eta \bar\beta}v^{\eta}+u'(t) h_{\alpha \bar\beta})\nabla v^{\alpha} \wedge \nabla \overline{v^{\beta}}\Big)  \label{Cmetric2}
\end{align}
where
\begin{equation}
    \nabla v^{\alpha}=dv^{\alpha} +(v^1 \Gamma_{1z}^{\alpha} +v^2 \Gamma_{2z}^{\alpha})dz.
\end{equation}
With the choice of Hermitian metric $h$ on $E$ in the form of (\ref{Hdef}), we may solve
\begin{align*}
    & \nabla v^1=dv^1+(v^1\Gamma_{1z}^1+v^2\Gamma_{2z}^1)dz=dv^1+\frac{\sqrt{-1}}{2k}(yv^1+(k-y^2)v^2)dz.\\
    & \nabla v^2=dv^2+(v^1\Gamma_{1z}^2+v^2\Gamma_{2z}^2)dz=dv^2+\frac{\sqrt{-1}}{2k}(v^1-yv^2)dz.
\end{align*}
\begin{align*}
    & R^E_{z \bar z \alpha \bar \beta}v^{\alpha} \overline{v^{\beta}}=-\frac{1}{4k}(|v^1|^2-y(v^1 \overline{v^2}+v^2 \overline{v^1})+(y^2-k)|v^2|^2)=-\frac{1}{4k}(|v^1-yv^2|^2-k|v^2|^2).\\
    & h_{\alpha \bar{\xi}} \overline{v^{\xi}} h_{\eta \bar\beta} v^{\eta}=
    \begin{pmatrix}
        |v^1-yv^2|^2 & (\overline{v^1}-y\overline{v^2})(-yv^1+(y^2+k)v^2)
        \\(v^1-yv^2)(-y\overline{v^1}+(y^2+k)\overline{v^2}) & |-yv^1+(y^2+k)v^2|^2
    \end{pmatrix}.
\end{align*}
If we introduce
\begin{equation*}
    \psi^1=\nabla v^1-y\nabla v^2, \ \psi^2=\nabla v^2, \ a=v^1-yv^2, \ b=kv^2.
\end{equation*} and
\begin{equation*}
    A_{\alpha \bar \beta}=\begin{pmatrix}
        |a|^2 &b\bar a
        \\ a\bar b& |b|^2
    \end{pmatrix},\ \
    B_{\alpha \bar\beta}=\begin{pmatrix}
        1&0\\0 &k
    \end{pmatrix}.
\end{equation*}
Then (\ref{Cmetric2}) can be further reduced to
\begin{equation}
    \omega_g=\sqrt{-1}\Big[ \Big(1+\frac{1}{4k}u'(t) (|v^1-yv^2|^2-k|v^2|^2)\Big) dz \wedge d\bar z+(u''A_{\alpha \bar \beta}+u'B_{\alpha \bar\beta}) \psi^{\alpha} \wedge \overline{\psi^{\beta}}\Big].  \label{Cmetric3}
\end{equation}

Let $p$ be a fixed point on a complete K\"ahler manifold $(X, g)$, and $d_{g}$ the distance function from $p$. We define the order of growth of a holomorphic function $f \in \mathcal{O}(X)$ (in the sense of Hadamard) as
\begin{equation}
    d_{H,f}=\inf\{\rho>0 \ |\ \text{there exist}\ \rho, C_1, C_2>0\ \text{so that}\ |f(q)| \leq C_1 e^{C_2(d_g(q, p)+1)^{\rho}}\ \text{for any}\ q \in X\}.
\end{equation}
Then we introduce a notion of the minimal $H$-(Hadamard) order on $(X, g)$ as
\begin{equation}
    d_{Hmin}(X, g)=\inf \{d_{H,f}\ |\ f \in \mathcal{O}(X, g)\}.
\end{equation}

\begin{proposition}\label{holoCala}
Let $E$ be the indecomposable rank $2$ vector bundle of degree zero defined in (\ref{Edef}) endowed with the Hermitian metric on $E$ defined in (\ref{Hdef}). Let $X$ be the total space of $E$.
Then for any complete K\"ahler metric $g$ on $X$ of the Calabi type, $d_{Hmin}(X, g) \geq 2$. Moreover, there exists a complete K\"ahler metric of the Calabi type so that the corresponding $d_{Hmin} =2$.
\end{proposition}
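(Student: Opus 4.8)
The plan is to reduce the whole statement to the growth of the single fiber coordinate $v^{2}$ (which is the coordinate called $z_{2}$ in Proposition \ref{holoDPS}): by that proposition every $f\in\mathcal{O}(X)$ equals $h(v^{2})$ for an entire $h$, so $d_{Hmin}(X,g)$ is the infimum, over nonconstant entire $h$, of the Hadamard order of $h(v^{2})$, and the problem becomes the comparison of $|v^{2}|$ with $d_{g}$ along the (totally geodesic) fibers. The structural input I would isolate first is a constraint on $u$. Writing $\phi(t):=t\,u'(t)$, positivity of the fiber block in (\ref{Cmetric3}) reads $u'>0$ and $\phi'(t)=u'+tu''>0$, while positivity of the $dz\wedge d\overline z$–coefficient $1+\tfrac{u'}{4k}\bigl(|v^{1}-yv^{2}|^{2}-k|v^{2}|^{2}\bigr)$, evaluated at the points $v^{1}=yv^{2}$ (where $t=k|v^{2}|^{2}$), forces $1-\tfrac{\phi(t)}{4k}>0$, i.e. $\phi(t)<4k$ for all $t$. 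Thus for \emph{any} complete Kähler metric of the Calabi type, $\phi$ is increasing and bounded; this single fact drives the lower bound.

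For $d_{Hmin}\ge 2$, I combine $\phi<4k$ with Cauchy--Schwarz in the fiber-distance formula (\ref{fiberdist}). Along the ray $v^{1}=0$ one has $t\asymp|v^{2}|^{2}$, and since $u'+tu''=\phi'$,
\[
d_{g}\bigl((0,v^{2}),0\bigr)=\int_{0}^{t}\sqrt{\phi'(s)}\,\frac{ds}{2\sqrt s}\le\frac12\Bigl(\int_{0}^{t}\phi'\Bigr)^{1/2}\Bigl(\int\frac{ds}{s}\Bigr)^{1/2}\le\sqrt{k}\,(\log t)^{1/2}\lesssim(\log|v^{2}|)^{1/2}.
\]
Because the fibers are totally geodesic and the zero section is compact, $d_{g}(q,p)\lesssim(\log|v^{2}|)^{1/2}$ for such $q$. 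Now for nonconstant entire $h$ the Cauchy estimates give $\max_{|v^{2}|=R}|h|\ge cR$ for large $R$; choosing $q$ (with $v^{1}=0$, $|v^{2}|=R$) where this maximum is attained, $\tfrac{\log|h(v^{2})|}{d_{g}(q,p)^{\rho}}\gtrsim\tfrac{\log R}{(\log R)^{\rho/2}}\to\infty$ for every $\rho<2$. Hence no bound $|f|\le C_{1}e^{C_{2}d_{g}^{\rho}}$ with $\rho<2$ can hold, so $d_{H,f}\ge 2$ for every nonconstant $f$, giving $d_{Hmin}(X,g)\ge 2$ for all complete Calabi-type $g$.

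For the second assertion I would construct one complete Calabi-type metric realizing $d_{H,v^{2}}=2$. Choose $u$ so that $\phi=tu'$ is smooth, increasing, $0<\phi<4k$, with $\phi'(t)\sim\dfrac{c}{t\log t(\log\log t)^{2}}$ for large $t$ (and any admissible profile near $t=0$ keeping $u'>0$, $\phi'>0$). Then $\phi$ stays bounded while $\int^{\infty}\sqrt{\phi'}\,\tfrac{ds}{2\sqrt s}=\infty$, so by the totally geodesic discussion the metric is complete and Kähler. Integrating gives the radial distance $\rho(t):=\int^{t}\sqrt{\phi'}\tfrac{ds}{2\sqrt s}\sim(\log t)^{1/2}/\log\log t$, which via the path into the zero section yields $d_{g}\lesssim(\log|v^{2}|)^{1/2}/\log\log|v^{2}|$. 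Since the lower bound of the previous paragraph still applies, every nonconstant $f$ has $d_{H,f}\ge 2$; it remains only to show that $v^{2}$ itself has order $\le 2$, i.e. to establish the companion distance lower bound $d_{g}\gtrsim(\log|v^{2}|)^{1/2-\delta}$, which will force $|v^{2}|\le C_{1}e^{C_{2}d_{g}^{\rho}}$ for every $\rho>2$ while failing at $\rho=2$, so that $d_{H,v^{2}}=2$ and hence $d_{Hmin}(X,g)=2$.

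The main obstacle is exactly this distance lower bound, for which the totally geodesic property alone is insufficient. I would use the Lipschitz comparison function $w:=\rho(t)$. Since the Chern connection is metric, the horizontal derivative of $t$ vanishes, so $\partial t=t_{v^{\alpha}}\psi^{\alpha}$ is purely vertical and $A_{\alpha\overline\beta}=t_{v^{\alpha}}\overline{t_{v^{\beta}}}$; writing $s=(t_{v^{1}},t_{v^{2}})^{T}$ and $M=u''A+u'B$ one gets $|\nabla t|_{g}^{2}=2\,s^{*}M^{-1}s$, and $|\nabla w|_{g}^{2}=\rho'(t)^{2}|\nabla t|_{g}^{2}=\tfrac{\phi'}{2t}\,s^{*}M^{-1}s$. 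Using $\det M=u'k\phi'$ (and at $v^{1}=yv^{2}$ the exact value $s^{*}M^{-1}s=t/\phi'$, with $\lambda_{\min}(M)\gtrsim\phi'$ giving $s^{*}M^{-1}s\lesssim t/\phi'$ in general) one obtains the uniform bound $|\nabla w|_{g}^{2}\le C$. Then $w$ is Lipschitz, so $d_{g}(q,p)\gtrsim\rho(t(q))-O(1)\gtrsim(\log|v^{2}|)^{1/2}/\log\log|v^{2}|$, completing the argument. The delicate point is precisely this gradient estimate: because $u''<0$ in the relevant range the fiber metric $M$ degenerates and the denominator controlling $s^{*}M^{-1}s$ is nearly critical, so one must verify carefully that the chosen log--log profile keeps $\phi$ bounded (ensuring $\lambda_{\min}(M)\gtrsim\phi'$) yet lets the distance grow like $(\log|v^{2}|)^{1/2-o(1)}$.
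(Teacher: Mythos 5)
Your proposal follows the paper's argument essentially step for step: the positivity of the $dz\wedge d\bar z$--coefficient at $v^1=yv^2$ forcing $tu'(t)\le C$, the Cauchy--Schwarz bound $d\le C\sqrt{\ln t}$ on the fiber integral (\ref{fiberdist}), the reduction via Proposition \ref{holoDPS} to entire functions of $z_2$ together with the elementary lower bound $\max_{|z_2|=R}|h|\ge cR$, and the profile $(tu')'\sim t^{-1}(\ln t)^{-1}(\ln\ln t)^{-2}$ for the optimal metric are all exactly the paper's choices. The only divergence is the step you leave open, namely the distance lower bound $d_g\gtrsim(\ln|z_2|)^{1/2}/\ln\ln|z_2|$ needed for $d_{H,z_2}\le 2$: you propose a Lipschitz comparison function $w=\rho(t)$ and a gradient estimate involving $(u''A+u'B)^{-1}$, whereas the paper gets this from the totally geodesic fibers reviewed at the start of Section \ref{sec2}. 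Contrary to your remark that total geodesy ``alone is insufficient,'' it does suffice: the zero section is compact, so a minimizing geodesic from $q$ to it meets it orthogonally, hence has vertical initial velocity and, the fiber being totally geodesic, stays inside the fiber through $q$; therefore $d_g(q,p)$ agrees up to $\operatorname{diam}(\Sigma)$ with the intrinsic radial distance $\rho(t)$ in the (rotationally symmetric) fiber metric, which gives the two-sided comparison at once and bypasses the delicate estimate $\lambda_{\min}(u''A+u'B)\gtrsim\phi'$ that you correctly flag as the sticking point of your route.
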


\begin{proof}[Proof of Proposition \ref{holoCala}]
In our case $t=|v^1-yv^2|^2+k|v^2|^2$. We observe that $|v^1-yv^2|^2-k|v^2|^2=-t$ when $v^1=yv^2$. We conclude that $tu'(t) \le C$ for some constant $C$ as $\omega_g$ is positive-definite. The integral in (\ref{fiberdist}) can be bounded from the above. Indeed, for $x>0$ large enough, we have
\begin{equation}
    \int_1^{x} \sqrt{tu''(t)+u'(t)} \frac{1}{\sqrt{t}} \,dt \le (\int_1^{x} (tu'(t))' \,dt)^{\frac{1}{2}} (\int_1^{x} \frac{1}{t} \,dt)^{\frac{1}{2}} \le C \sqrt{\ln x}.  \label{roughest}
\end{equation}

Now we consider the growth of any $f \in \mathcal{O}(X)$ with respect to the distance $d$ induced by a Calabi-type metric $g$. According to Proposition \ref{holoDPS}, it suffices to estimate the growth of $z_2$ along each fiber. As $\ln t \geq \frac{1}{C^2} d^2$ by (\ref{roughest}), we conclude $d_{H, f} \geq 2$.

It remains to construct a Calabi-type metric such that $d_{Hmin} =2$. We consider a smooth positive function $h(t)$ on $[0, +\infty)$ so that $h(t)=t^2$ on $[0,\alpha]$ and
\begin{equation*}
h(t) =\frac{1}{t\ln t (\ln \ln t)^2}\ \text{for}\ t \in [\alpha+1, +\infty).
\end{equation*}
Here $\alpha$ is a sufficiently large constant such that $\ln \ln t>2$ when $t >\alpha$. Then $\int_0^{+\infty} h(t)\,dt$ converges to a constant $C$. Let
\begin{equation}
(t u'(t))' =\frac{2k}{C} h(t) \ \text{for}\ t \geq 0.    \label{uchoose3}
\end{equation}
Then the coefficient of the $\sqrt{-1}dz \wedge d\overline{z}$ term in (\ref{Cmetric3}) is positive. Note that $t \le |a|^2+|b|^2 \le kt$ and $tu''(t) +u'(t) > 0$. Hence $u''(t)A +u'(t) B$ is positive definite. We further estimate the geodesic distance by (\ref{fiberdist}). For sufficiently large $x>0$,
\begin{equation*}
    \int_0^x \sqrt{tu''(t)+u'(t)} \frac{1}{\sqrt{t}} \,dt \sim  \int_{\alpha+1}^{x}  \frac{1}{t \sqrt{\ln t} \ln \ln t} \,dt=\infty.
\end{equation*}
Hence (\ref{Cmetric3}) defines a complete Calabi-type metric.  Finally, $d_{Hmin} =2$ follows from an elementary identity
\begin{equation*}
    \lim_{x \to +\infty} \dfrac{\ln \ln x}{\ln\int_{\alpha+1}^{x}  \frac{1}{t \sqrt{\ln t} \ln \ln t} \,dt}=2.
\end{equation*}
To sum up, (\ref{uchoose3}) is an alternative choice of $u$ in the proof of Theorem \ref{vecmetric} under which holomorphic functions grow more `slowly' than in (\ref{uchoice}).
\end{proof}

\section{Line bundles whose total spaces admit K\"ahler metrics with \texorpdfstring{$BI \geq 0$}{TEXT}}\label{sec3}

In this section, we study complete K\"ahler metrics with $BI \geq 0$ on the total space of a line bundle over a complex torus, proving Theorem \ref{res_3_intro} and Proposition \ref{rankkflat}. We begin with a statement of Appel-Humbert's theorem (\cite{Mumford} and \cite{Debarre}) which characterizes any line bundle over a compact complex torus.

Let $V$ be a complex vector space of dimension $n$ and $V=\mathbb{C}\text{-span}\{e_1, \cdots, e_n\}$. Let $H$ be a Hermitian bilinear form on $V \times V$. For any $z \in V$ with $z=z^i e_i$, we define
\begin{equation}
    h_{i\bar{j}}=H(e_i, e_j)=\operatorname{Re}H(e_i, e_j)+\sqrt{-1}\operatorname{Im}H(e_i, e_j).  \label{Hform1}
\end{equation} Then $\operatorname{Re}H$ is symmetric while $\operatorname{Im}H$ is skew-symmetric, and both are $i$-invariant in the sense of $(\ ,\ )=(i \cdot\ , i \cdot\ )$. We may also consider the corresponding real $(1, 1)$ form
\begin{equation}
    H_{form}=\sqrt{-1}h_{i\bar{j}} dz^i \wedge d\overline{z^j}.  \label{Hform2}
\end{equation}

Next we pick $\Gamma \subset V$ which is a lattice of real rank $2n$.  Let $V_{\mathbb{R}}$ denote the underlying real vector space of $V$. Then
we have $\Gamma \otimes_{\mathbb{Z}} \mathbb{R}=V_{\mathbb{R}}$. It is possible to define an endormorphism $J$ with $J^2=-id$ on $V_{\mathbb{R}}$ (and its complexification $V_{\mathbb{R}} \otimes_{\mathbb{R}} \mathbb{C}$) so that $V$ is exactly the $\sqrt{-1}$ eigenspace. On the compact complex torus $M=V / \Gamma$, the real $(1, 1)$ form $H_{form}$ defined in (\ref{Hform2}) lies in $H^2(M, \mathbb{Z})$ if and only if $\operatorname{Im}H$ is $\mathbb{Z}$-valued on $\Gamma \times \Gamma$. We refer the reader to \cite[Lemma 3.7]{Schnell_25} for a nice geometric explanation of this fact.

Let $H$ be a Hermitian bilinear form on $V$ so that $\operatorname{Im}H$ is $\mathbb{Z}$-valued when restricted onto $\Gamma \times \Gamma$. Let $\alpha: \Gamma \rightarrow S^1$ so that
\begin{align}
\alpha(\gamma_1+\gamma_2)=e^{i \pi \operatorname{Im} H(\gamma_1, \gamma_2)}\alpha(\gamma_1)\alpha(\gamma_2).   \label{alphasemi}
\end{align}
Then we may define a line bundle $L(H, \alpha)$ over $M$ as a quotient of $V \times \mathbb{C}$ under the action
\begin{align}
(z, \xi) \rightarrow (z+\gamma, \alpha(\gamma) e^{\pi H(z, \gamma)+\frac{\pi}{2}H(\gamma, \gamma)} \xi).    \label{AHaction}
\end{align} Obviously, (\ref{AHaction}) reduces to (\ref{AHaction_intro}) when $M$ is an elliptic curve. Appel-Humbert's theorem (see \cite[Theorem on p.19]{Mumford} and \cite[Theorem 5.17 on p.50]{Debarre}) states that any line bundle over $M$ is isomorphic to some $L(H, \alpha)$. Note that
$h(z, \xi)=e^{-\pi H(z, z)}|\xi|^2$ is invariant under the action (\ref{AHaction}) and descends to a Hermitian metric on $L(H, \alpha)$. Moreover, its curvature form $\Theta(L)=\pi H_{form}$ and a representative of $c_1(L)$ is $-\operatorname{Im}H$.

We state an observation on a class of subharmonic functions and recall a simple fact in linear algebra.

\begin{lemma}\label{nonzero}
    Given a fixed integer $n \geq 1$. Let $f(z)$ be a holomorphic function on $\mathbb{C}^n$ which is not identically zero, and $u$ a positive smooth function with $\Delta \ln u \leq 0$ (or $\Delta \ln u \ge 0$) everywhere on $\mathbb{C}^n$. We assume either of the following
     \begin{enumerate}[label=(\arabic*)]
      \item   $n=1$ and $u(z)|f(z)|^2$ is bounded between two positive constants on $\mathbb{C}$.
      \item   $u(z)|f(z)|^2$ is doubly periodic (invariant under $z \rightarrow z+\gamma$ for any $\gamma \in \Gamma$ for a lattice $\Gamma$ of real rank $2n$ in $\mathbb{C}^n$).
     \end{enumerate}
     Then $\ln u$ is harmonic and $u(z)|f(z)|^2$ is constant.
\end{lemma}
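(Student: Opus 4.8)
The plan is to set $v=u|f|^2$ and to first show that the positive lower bound on $v$ forces $f$ to be nowhere vanishing, which decouples the two factors. In case (1) the hypothesis gives $v\ge c>0$ directly, and in case (2) the function $v$ descends to a continuous positive function on the compact torus $M=\mathbb{C}^n/\Gamma$, hence is again bounded below by a positive constant; in either situation $f(z_0)=0$ would give $v(z_0)=0$, a contradiction. Thus $f$ has no zeros on $\mathbb{C}^n$, so (as $\mathbb{C}^n$ is simply connected) $\ln f$ admits a global holomorphic branch and $\ln|f|^2=2\operatorname{Re}\ln f$ is pluriharmonic; in particular $\Delta\ln|f|^2\equiv 0$. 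Writing $\ln v=\ln u+\ln|f|^2$ then yields the key identity $\Delta\ln v=\Delta\ln u$ on all of $\mathbb{C}^n$, so that $\ln v$ is superharmonic (resp.\ subharmonic) exactly when $\ln u$ is.

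For case (1) I would invoke a Liouville theorem in real dimension two: a superharmonic function on $\mathbb{C}$ that is bounded below, and a subharmonic function bounded above, must be constant. Here $\ln v$ is bounded on both sides since $v$ lies between two positive constants, so under either sign assumption $\ln v$ is constant; consequently $v$ is constant and $\Delta\ln u=\Delta\ln v=0$, i.e.\ $\ln u$ is harmonic. The underlying Liouville statement is the standard one: the circular mean $A(r)=\frac{1}{2\pi}\int_0^{2\pi}\ln v(re^{i\theta})\,d\theta$ is monotone and convex in $\ln r$, and a one-sidedly bounded convex function of $\ln r\in\mathbb{R}$ is constant, after which equality in the sub/super-mean-value inequality forces $\ln v$ itself to be constant.

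For case (2), valid for all $n\ge 1$, I would integrate over the torus rather than appeal to a planar Liouville theorem. Since $v>0$ is doubly periodic and smooth, $\ln v$ descends to a smooth function on $M$, whence $\int_M\Delta\ln v\,dV=0$ because the integral of a Laplacian over a closed manifold vanishes. By the key identity this gives $\int_M\Delta\ln u\,dV=0$, while $\Delta\ln u$ is continuous, doubly periodic (being equal to $\Delta\ln v$) and of one fixed sign; a sign-definite continuous function with vanishing integral over $M$ is identically zero. Hence $\Delta\ln u\equiv0$, so $\ln u$ is harmonic, and $\ln v$ is then a periodic harmonic function, which is constant by the maximum principle on $M$ (equivalently $\int_M|\nabla\ln v|^2=-\int_M\ln v\,\Delta\ln v=0$). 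Thus $v$ is constant.

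The main obstacle is the step showing $f$ is nowhere vanishing, which is exactly where a positive lower bound on $v$ is indispensable, particularly in the superharmonic branch of case (2): taking $u=e^{-2\pi y^2}$ and $f$ a Jacobi theta function on $\Sigma=\mathbb{C}/(\mathbb{Z}+\mathbb{Z}\sqrt{-1})$ produces a doubly periodic $v=u|f|^2$ with $\ln u$ strictly superharmonic and $v$ nonconstant, so the doubly-periodic hypothesis in (2) should be read together with $v>0$. The only other delicate point is the two-dimensional Liouville theorem in case (1), which is genuinely special to $\mathbb{C}=\mathbb{R}^2$ and explains the restriction to $n=1$ there; in higher real dimension one-sidedly bounded superharmonic functions need not be constant, whereas case (2) sidesteps this issue entirely through compactness of the torus.
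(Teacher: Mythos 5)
Your case (1) is correct and is essentially the paper's argument: the positive lower bound forces $f$ to be zero-free, so $\ln(u|f|^2)$ is a bounded sub- or superharmonic function on $\mathbb{C}$, hence constant by the planar Liouville theorem, and then $\Delta\ln u=\Delta\ln(u|f|^2)=0$.

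Case (2) is where the real issue lies, and you have put your finger on it. Your written argument is circular at the first step: you deduce that $f$ is zero-free from the claim that $v=u|f|^2$ descends to a \emph{positive} function on the torus, but positivity of $v$ is exactly equivalent to zero-freeness of $f$, and the hypothesis only gives periodicity. However, the counterexample you flag is genuine: for $\Gamma=\mathbb{Z}+\mathbb{Z}\sqrt{-1}$, $u=e^{-2\pi y^2}$ (so $\Delta\ln u=-4\pi<0$) and $f=\vartheta$ the Jacobi theta function, $u|f|^2$ is doubly periodic, vanishes on $Z(\vartheta)$, and is not constant, while $\ln u$ is not harmonic. So part (2) of Lemma \ref{nonzero} is false as stated in the superharmonic branch $\Delta\ln u\le 0$. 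The paper's own proof breaks at the sentence asserting that ``the integral of $\Delta\ln u$ is zero in a fundamental parallelogram'': $\ln u$ itself is not periodic, so nothing forces that integral to vanish, and in the example above it equals $-4\pi$ times the area. What the Poincar\'e--Lelong formula actually yields, applied distributionally to the periodic $L^1$ function $\ln(u|f|^2)$ on the torus, is $\int\Delta\ln u\,dV=-c\cdot\operatorname{vol}\bigl(Z(f)\bigr)\le 0$ over a fundamental domain, with $c>0$ and the zero divisor counted with multiplicity. Combined with the pointwise sign, this \emph{does} prove the subharmonic branch $\Delta\ln u\ge 0$ without any extra hypothesis (it forces $\Delta\ln u\equiv 0$ and $Z(f)=\emptyset$ simultaneously, after which a periodic harmonic function is constant), but it gives nothing in the superharmonic branch unless one adds the hypothesis that $u|f|^2$ is everywhere positive --- your proposed fix, under which your torus-integration argument is correct and cleaner than the paper's. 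Be aware that this is not a cosmetic repair: Case 2 of the proof of Lemma \ref{BIintro_part2} applies part (2) with $\Delta\ln u\le 0$ to $u|f_{1,p}|^2$ before knowing that $f_{1,p}$ is zero-free, so the amended lemma would need its positivity hypothesis verified there, or that step needs a different argument.
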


\begin{proof}[Proof of Lemma \ref{nonzero}]
  (1)  Note that $f$ has no zeros and $\Delta \ln (h|f|^2)=\Delta \ln u \le 0$. Then $-\ln(h(z)|f(z)|^2)$ is a bounded subharmonic function. Therefore, $h(z)|f(z)|^2$ is constant. Note that any smooth bounded subharmonic function with sub-logarithmic growth on $\mathbb{C}$ is constant. It follows that $\Delta \ln h=-\Delta \ln |f|^2=0$.

   (2) Let $Z(f)$ denote the zero set of an entire function $f$. Note that $Z(f)$ is a union of analytic hypersurfaces (with multiplicity) in $\mathbb{C}^n$. By the Poincar\'e-Lelong formula (\cite[p.388]{GH}), we have
   \[
   \sqrt{-1}\partial  \bar{\partial} \ln (u |f|^2)= \sqrt{-1}\partial  \bar{\partial}  \ln u  + \sqrt{-1}\partial  \bar{\partial}  \ln (|f|^2)=\sqrt{-1} \partial  \bar{\partial}  \ln u, \ \ \ z \in \mathbb{C}^n \setminus Z(f).
   \]
   According to our assumption, $\sqrt{-1} \partial  \bar{\partial}  \ln u$ is doubly-periodic in $\mathbb{C}^n \setminus Z(f)$. As it is smooth, we observe that it is in fact doubly-periodic on $\mathbb{C}^n$. In view of
   \[
   \sqrt{-1} \partial  \bar{\partial}  \ln u \wedge \omega_0^{n-1}=\frac{n}{4}(\Delta \ln u)\, \omega_0^{n}
   \] where $\omega_0$ the standard K\"ahler form on $\mathbb{C}^n$, $\Delta \ln u$ is also doubly periodic. But then $\Delta \ln u=0$ everywhere as the integral of $\Delta \ln u$ is zero in a fundamental parallelogram (corresponding to a closed torus). We conclude that $\ln(u(z)|f(z)|^2)$ is subharmonic, and it attains the maximum at some point. Therefore, $u(z)|f(z)|^2$ is constant.
\end{proof}

\begin{lemma}\label{detlemma}
    Let $a_1,a_2,\cdots,a_n$ be $n$-dimensional column vectors, and $A=a_1 \overline{a_1}^T+ \cdots a_n \overline{a_n}^T$. Then $A=B \overline{B}^T$ where $B=(a_1,a_2,\cdots,a_n)$. In particular, $\det A=|\det B|^2$.
\end{lemma}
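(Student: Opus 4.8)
The plan is to establish the matrix identity $A=B\overline{B}^T$ by a direct entrywise computation, after which the determinant formula follows immediately from the multiplicativity of $\det$. Writing $(a_p)_i$ for the $i$-th entry of the column vector $a_p$, the matrix $B=(a_1,\dots,a_n)$ has entries $B_{ip}=(a_p)_i$, so its conjugate transpose $\overline{B}^T$ has entries $(\overline{B}^T)_{pk}=\overline{(a_p)_k}$. I would then compute the $(i,k)$-entry of the product, namely $(B\overline{B}^T)_{ik}=\sum_{p=1}^n (a_p)_i\,\overline{(a_p)_k}$, and compare it with the $(i,k)$-entry of the sum of rank-one outer products, $A_{ik}=\sum_{p=1}^n (a_p\overline{a_p}^T)_{ik}=\sum_{p=1}^n (a_p)_i\,\overline{(a_p)_k}$. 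Since the two expressions agree for all $i$ and $k$, we conclude $A=B\overline{B}^T$.

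For the determinant statement I would invoke three elementary properties of $\det$: it is multiplicative, it is invariant under transposition, and it sends entrywise complex conjugation of a matrix to complex conjugation of its value. Concretely, $\det A=\det(B\overline{B}^T)=(\det B)(\det \overline{B}^T)$, and since $\det \overline{B}^T=\det \overline{B}=\overline{\det B}$, this yields $\det A=(\det B)\overline{(\det B)}=|\det B|^2$.

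Since the argument is a routine bookkeeping of indices, there is no genuine obstacle here; the only point requiring care is to interpret $\overline{B}^T$ consistently as the conjugate transpose throughout and to apply the three properties of $\det$ listed above in the correct order.
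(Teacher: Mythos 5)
Your proof is correct: the entrywise identification $(B\overline{B}^T)_{ik}=\sum_{p}(a_p)_i\overline{(a_p)_k}=A_{ik}$ and the chain $\det A=(\det B)\overline{(\det B)}=|\det B|^2$ are exactly right. The paper states this lemma without proof as a standard linear-algebra fact, and your computation is precisely the routine verification it implicitly relies on, so there is nothing to compare beyond noting agreement.
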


\begin{proof}[Proof of Proposition \ref{rankkflat}]
    If each $H_p=0$, it follows from (\ref{AHaction}) that
    \begin{equation*}
        \omega=\sqrt{-1}(\sum_{q=1}^{n-k} dz_q \wedge d \overline{z_q} +\sum_{p=1}^k d\xi_p \wedge d \overline{\xi_p} )
    \end{equation*}
    is a well-defined complete flat K\"ahler metric on $X$.

    Conversely, assume that $X$ admits a complete flat K\"ahler metric $g$. There exists
    $F \in \operatorname{Aut}(\mathbb{C}^n)$ with $F(z,\xi)=(f_1(z,\xi),f_2(z,\xi),\cdots,f_n(z,\xi))$ such that
    \begin{equation*}
        \omega_g=\sqrt{-1} \sum_{j=1}^n df_j \wedge d\overline{f_j},
    \end{equation*}
    is invariant with respect to
    \begin{align}
         (z, \xi_1, \cdots, \xi_k) \rightarrow (z+ \gamma, \alpha_1(\gamma)e^{\pi H_1(z, \gamma)+\frac{\pi}{2}H_1(\gamma, \gamma)}\xi_1, \cdots, \alpha_k(\gamma)e^{\pi H_k(z, \gamma)+\frac{\pi}{2}H_k(\gamma, \gamma)}\xi_k).   \label{AHaction2}
    \end{align}
    Along the zero section of $E$ which is $\{\xi_1=\cdots=\xi_k=0\}$, we have
    \begin{equation*}
        df_j= f_{j,1}(z)dz_1 +\cdots+ f_{j,n-k}(z) dz_k+ f_{j,n-k+1}(z) d\xi_1 +\cdots+ f_{j,n}(z) d\xi_k
    \end{equation*}
    for $j=1,2,\cdots,n$. It follows from Lemma \ref{detlemma} that there exists a holomorphic function $h(z)$ such that
    $\det(g)=|h(z)|^2 >0$. The volume form
    \begin{equation}
        \det(g) dz_1 \wedge d\overline{z_1} \wedge \cdots \wedge dz_{n-k} \wedge d\overline{z_{n-k}} \wedge\cdots \wedge d\xi_k \wedge d\overline{\xi_k}
    \end{equation}
    is invariant with respect to (\ref{AHaction2}). Therefore, for any $\gamma \in \Lambda$,
    \begin{equation}
        |h(z)|^2= |h(z+\gamma)|^2 e^{\pi H_1(z,\gamma)+ \pi H_1(\gamma,z)+ \pi H_1(\gamma,\gamma)} \cdots e^{\pi H_k(z,\gamma)+ \pi H_k(\gamma,z)+ \pi H_k(\gamma,\gamma)}.
    \end{equation}
    Then
    \begin{equation}
        |h(z)|^2 e^{\pi (H_1(z,z)+\cdots+ H_k(z,z))}
    \end{equation}
    is invariant under $z \to z+ \gamma$, therefore bounded between two positive constants. For any fixed $z_0 \in \mathbb{C}^{n-k}$, consider a holomorphic function of a single variable $t$ as
    \begin{equation*}
        |h(tz_0)|^2 e^{\pi (H_1(z_0,z_0)+ \cdots + H_k(z_0,z_0)) |t|^2}.
    \end{equation*}
    It follows from part (1) of Lemma \ref{nonzero} that
    \begin{equation}
        H_1(z_0,z_0) +\cdots + H_k(z_0,z_0) =0.
    \end{equation}
    If there exists some $1 \leq p \leq k$ so that $H_p(z_0,z_0)>0$, then we compare the coefficients in front of $d\xi_p \wedge d\overline{\xi_p}$ and get
    \begin{equation*}
        \sum_{j=1}^n (|f_{j,n-k+p}(z)|^2) e^{\pi H_p(z,z)}
    \end{equation*}
    is invariant under $z \rightarrow z+\gamma$. Similarly, we get
    \begin{equation*}
        \sum_{j=1}^n (|f_{j,n-k+p}(tz_0)|^2) e^{\pi H_k(z_0,z_0)|t|^2}
    \end{equation*}
    is bounded. Taking $|t| \to +\infty$, we conclude that $f_{j,n-k+p}(tz_0)=0$ for each $j$ by the maximum principle. It's a contradiction as the Jacobian of $F$ is nondegenerate. Hence $H_p=0$ for all $p=1,2,\cdots,k$.
\end{proof}

We restate part (2) of Theorem \ref{res_3_intro} separately in the following.

\begin{lemma}\label{BIintro_part2}
     Let $L(H,\alpha)$ be a line bundle over a compact complex torus $\mathbb{C}^{n-1} / \Gamma$ and $X$ the total space. Assume that $X$ admits a complete K\"ahler metric $g$ with
     \begin{equation}
         \omega_g=\sqrt{-1} (u df_1 \wedge d\overline{f_1}+\sum_{j=2}^n df_j \wedge d\overline{f_j})
     \end{equation}
     Here $F(z,\xi)=(f_1(z,\xi),f_2(z,\xi),\cdots,f_n(z,\xi)) \in \operatorname{Aut}(\mathbb{C}^n)$ and $u(z, \xi) :=u \circ f_1(z,\xi)$ such that $u(w)|dw|^2$ is a complete K\"ahler metric with nonnegative Gauss curvature on $\mathbb{C}$. Then $H=0$.
\end{lemma}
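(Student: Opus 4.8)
The plan is to use the hypothesis that, after pulling back by the automorphism $F$, the metric $\omega_g$ is a Riemannian product, and to deduce that the deck transformations of $X=\mathbb{C}^n/\Gamma$ are conjugated by $F$ into holomorphic maps with \emph{unimodular} Jacobian. This unimodularity lets the nowhere-vanishing Jacobian of $F$ play exactly the role of the function $h$ in the proof of Proposition~\ref{rankkflat}, after which the same one-variable Liouville argument via Lemma~\ref{nonzero} forces $H=0$.

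First I would reformulate the invariance. Write $\omega_{\mathrm{prod}}=\sqrt{-1}\big(u(w_1)\,dw_1\wedge d\overline{w_1}+\sum_{j\ge 2}dw_j\wedge d\overline{w_j}\big)$ on $\mathbb{C}^n=\mathbb{C}_{w_1}\times\mathbb{C}^{n-1}_{w'}$, so that $\omega_g=F^{*}\omega_{\mathrm{prod}}$. Let $\phi_\gamma$ denote the deck action \eqref{AHaction}. Then $\Gamma$-invariance of $\omega_g$ is equivalent to saying that $\psi_\gamma:=F\circ\phi_\gamma\circ F^{-1}$ is a holomorphic isometry of $(\mathbb{C}^n,\omega_{\mathrm{prod}})$ for every $\gamma\in\Gamma$. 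Since $g$ is complete, $\omega_{\mathrm{prod}}$ is a complete simply connected K\"ahler product. If the factor $g_0=u|dw|^2$ is flat, then $\omega_g=F^{*}\omega_{\mathrm{prod}}$ is a complete flat K\"ahler metric on $X$, and $H=0$ is precisely Proposition~\ref{rankkflat} with $k=1$; so I may assume $g_0$ is non-flat.

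The hard part, and the main obstacle, is to show that each $\psi_\gamma$ has constant Jacobian of modulus one. Here I would invoke the de Rham decomposition theorem: since $(\mathbb{C},g_0)$ is a complete simply connected non-flat surface it is de Rham irreducible, while $(\mathbb{C}^{n-1},g_e)$ is the Euclidean factor, so every isometry $\psi_\gamma$ preserves the splitting, $\psi_\gamma(w_1,w')=(a_\gamma w_1+\beta_\gamma,\,U_\gamma w'+b_\gamma)$ with $U_\gamma\in U(n-1)$ and $w\mapsto a_\gamma w+\beta_\gamma$ a holomorphic isometry of $(\mathbb{C},g_0)$. A holomorphic isometry of $\mathbb{C}$ is affine, and when $a_\gamma\ne 1$ it has a fixed point, where the isometry condition on its differential forces $|a_\gamma|=1$; hence $J_{\psi_\gamma}=a_\gamma\det U_\gamma=:\lambda_\gamma$ is a constant with $|\lambda_\gamma|=1$.

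Finally I would run the computation. Set $\eta:=J_F$, a nowhere-vanishing entire function. Differentiating $F\circ\phi_\gamma=\psi_\gamma\circ F$ and taking determinants gives $\eta(\phi_\gamma(z,\xi))\,c(z,\gamma)=\lambda_\gamma\,\eta(z,\xi)$, where $c(z,\gamma)=\alpha(\gamma)e^{\pi H(z,\gamma)+\frac{\pi}{2}H(\gamma,\gamma)}$ is the holomorphic Jacobian of $\phi_\gamma$. Taking absolute values and using $|\lambda_\gamma|=|\alpha(\gamma)|=1$, I would verify that $\Phi(z,\xi):=|\eta(z,\xi)|^2 e^{\pi H(z,z)}$ satisfies $\Phi\circ\phi_\gamma=\Phi$, so that $\Phi$ descends to $X$; in particular $\Phi_0(z):=|\eta(z,0)|^2e^{\pi H(z,z)}$ is doubly periodic, hence bounded between two positive constants on $\mathbb{C}^{n-1}$. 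Fixing $z_0$ and restricting to the complex line $t\mapsto tz_0$, the one-variable function $|\eta(tz_0,0)|^2 e^{\pi H(z_0,z_0)|t|^2}$ is bounded between positive constants, so part~(1) of Lemma~\ref{nonzero} yields $H(z_0,z_0)=0$. As $z_0$ is arbitrary and a Hermitian form is determined by its associated quadratic form, this gives $H=0$.
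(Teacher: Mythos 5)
Your proof is correct, but it takes a genuinely different route from the paper's. The paper works entirely on the zero section: using Lemma \ref{detlemma} it writes $\det(g)=u(z)|h(z)|^2$ and gets from the volume form that $u(z)|h(z)|^2e^{\pi H(z,z)}$ is $\Gamma$-invariant, deduces only the inequality $H(z_0,z_0)\ge 0$ from part (1) of Lemma \ref{nonzero} (the superharmonic term $\Delta\ln u\le 0$ obstructs a direct equality), and then rules out $H(z_0,z_0)>0$ by a two-case comparison of the $d\xi\wedge d\overline{\xi}$ coefficients, each case ending in a contradiction with the nondegeneracy of the Jacobian of $F$. You instead conjugate the deck transformations by $F$ into holomorphic isometries of the model product metric and, after disposing of the flat case via Proposition \ref{rankkflat} with $k=1$, invoke the de Rham decomposition (a complete simply connected non-flat surface is irreducible, so the splitting is canonical and preserved) to conclude that each $\psi_\gamma$ splits as an affine map times a unitary affine map and hence has constant Jacobian of modulus one. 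The resulting cocycle identity for $J_F$ makes the conformal factor $u$ drop out entirely, so a single application of part (1) of Lemma \ref{nonzero} to $|J_F(tz_0,0)|^2e^{\pi H(z_0,z_0)|t|^2}$ gives $H(z_0,z_0)=0$ outright, with no sign dichotomy and no case analysis. The trade-off is clear: the paper's argument is elementary and purely computational (Liouville-type arguments plus coefficient comparison), while yours is conceptually cleaner and explains why the volume-form argument must close up, at the cost of leaning on the de Rham theorem and requiring the separate flat/non-flat split.
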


\begin{proof}[Proof of Lemma \ref{BIintro_part2}]
    Throughout the proof, we focus on the zero section $\{\xi=0\}$. Let
    \begin{equation*}
        df_j= f_{j,1}(z)dz_1 +\cdots +f_{j,n-1}(z) dz_{n-1}+ f_{j,n}(z) d\xi
    \end{equation*}
    for $j=1,2,\cdots,n$. It follows from Lemma \ref{detlemma} that there exists a holomorphic function $h(z)$ such that
    $\det(g)=u(z)|h(z)|^2 >0$. Considering the volume form,
    \begin{equation*}
        u(z)|h(z)|^2 e^{\pi H(z,z)}
    \end{equation*}
    is invariant under $z \to z+\gamma$. For any fixed $z_0=(z_{0, 1}, \cdots, z_{0, {n-1}})$,
    \begin{equation}
        \Delta_t \log \Big(u(t z_0)|h(t z_0)|^2 e^{\pi H(z_0,z_0)|t|^2}\Big)=\frac{1}{4}(\Delta \ln u) \big| \frac{\partial f_1}{\partial z_k} z_{0, k}\big|^2+2\pi H(z_0,z_0).
    \end{equation}
    Note that $\Delta \log u(z) \leq 0$. It follows from part (1) of Lemma \ref{nonzero} that
    \begin{equation*}
        H(z_0,z_0) \ge 0.
    \end{equation*}
    Suppose that there exists $z_0$ with $H(z_0,z_0) > 0$. We compare the coefficients in front of $d\xi \wedge d\overline{\xi}$ on $\{ tz_0 | t \in \mathbb{C}\}$. One has
    \begin{equation}
        (u(tz_0)|f_{1,n}(tz_0)|^2+ |f_{2,n}(tz_0)|^2 +\cdots +|f_{n,n}(tz_0)|^2) e^{\pi H(z_0,z_0)|t|^2}  \label{clinebdd}
    \end{equation}
    is bounded. We will show that $u(z)|f_{1,n}(z)|^2$ is constant, and then we get a contradiction as before.

   \textbf{Case 1:} If $f_{1, p}(z)$ is identically zero for any $1 \leq p \leq n-1$, then $f_1(z, 0)$ is constant, and so is $u(z, 0)=u(f_1(z, 0))$. Note that $H(z_0, z_0)>0$. It follows from (\ref{clinebdd}) that $f_{1, n}(tz_0)=\cdots=f_{n, n}(tz_0)=0$. This is a contradiction, as the determinant of the complex Jacobian of $F$ is nowhere vanishing.

   \textbf{Case 2:} We assume that there exists some $1 \leq p \leq n-1$ such that $f_{1,p}(z)$ is not identically zero. Comparing the coefficients in front of $dz_p \wedge d\overline{z_p}$, we get that
   \[
   u(z)|f_{1, p}(z)|^2+|f_{2, p}|^2+\cdots+|f_{n, p}|^2
   \]
    is doubly periodic. As each $f_{k, p}$ with $2 \leq k \leq n$ has to be constant, $u(z)|f_{1,p}(z)|^2$ is doubly periodic. By part (2) of Lemma \ref{nonzero}, $u(z)|f_{1,p}(z)|^2$ must be constant. Then $f_{1,p}$ has no zeros and $u(z)=\frac{C}{|f_{1,p}(z)|^2}$. By (\ref{clinebdd}), we get
    \begin{equation*}
        C|\frac{f_{1,n}(tz_0)}{f_{1,1}(tz_0)}|^2+ |f_{2,n}(tz_0)|^2 +\cdots +|f_{n,n}(tz_0)|^2 \to 0.
    \end{equation*}
    However, it follows that $f_{k, n}(tz_0)=0$ for any $1 \leq k \leq n$. Contradiction again!

    To sum up, we have proved $H \equiv 0$.

\end{proof}

\begin{proof}  [Proof of Theorem \ref{res_3_intro}]

\textbf{Step 1:} We prove the first part of Theorem \ref{res_3_intro}. Let $X$ be the total space of a line bundle $L(H,\alpha)$ over an elliptic curve $\Sigma$. We show that $X$ admits a complete K\"ahler metric with nonnegative bisectional curvature if and only if $H=0$.

Assume $X$ admits a complete K\"ahler metric $g$ with $BI \geq 0$. We show that the corresponding universal covering manifold $(\widetilde{X}, \widetilde{g})$ is holomorphically isometric to $\mathbb{C} \times N$ where $(N, h)$ is a complete K\"ahler metric with nonnegative Gauss curvature on $\mathbb{C}$. Once it holds, $H=0$ follows from Lemma \ref{BIintro_part2}.

To see it, we use a remarkable result of Ni-Tam \cite{NT2003}. For any fixed point $p \in X$, we consider the Busemann function $B_{\sigma}(x)=\lim_{s \rightarrow \infty} (s-d(x, \sigma(s)))$ associated to a geodesic ray $\sigma$ from $p$. Then we define
\begin{equation}
    \mathcal{B}(x)=\sup_{\sigma} B_{\sigma}(x).
\end{equation} According to Wu \cite[Theorem A on p.58]{Wu1979}, $\mathcal{B}$ is a Lipschitz continuous (with Lipschitz constant $1$) plurisubharmonic (PSH for short) function on $X$. In \cite{NT2003} the authors study the heat flow of $\mathcal{B}$ with respect to the metric $g$
\begin{equation}
    \frac{\partial}{\partial t} v-\Delta_g v=0,\ \ \ v(x, 0)=\mathcal{B}(x).
\end{equation}
For any fixed $t>0$, $v(x, t)$ is a smooth PSH function with $\sup_{x \in X} |\nabla v(x, t)| \leq 1$. However, $v$ can not be strictly PSH as the restriction of $v$ onto the zero section of $L$ is constant.
According to \cite[Lemma 4.1]{NT2003}, there are two cases.

Case 1: $\mathcal{B}$ is pluriharmonic on $X$. If so, $\mathcal{B}$ is a smooth harmonic function with linear growth. Following the proof of \cite[Theorem 4.1]{NT2003}, we see that $\nabla B$ is parallel. Then $\nabla \mathcal{B}$ generates a geodesic line with $Ric(\nabla \mathcal{B}, \nabla \mathcal{B})=0$. Moreover, $J \nabla \mathcal{B}$ is also parallel and its lifting to $\widetilde{X}$ generates a geodesic line. In this case $(\widetilde{X}, \widetilde{g})$ splits as $(\mathbb{C} \times N, g_e+h)$.

Case 2: For any $t>0$,
\[
\mathcal{K}(x, t)=\{ w \in T^{1, 0}_x (X)\ |\ v_{\alpha \overline{\beta}} w^{\alpha}=0,\ \ \forall\, 1 \leq \beta \leq n \}
\] is a nontrivial parallel distribution on $X$. Now we fix some $t_0>0$ and write $v(x)$ as $v(x, t_0)$. Let $\rho: \widetilde{X} \rightarrow X$ and $\rho^{\ast} v$ the lifting of $v$ onto $\widetilde{X}$. The universal covering $(\widetilde{X}, \widetilde{g})$ is holomorphically isometric to as $N_1 \times N_2$ so that $T^{1, 0} N_1=\mathcal{K}(x, t_0)$. In particular, the complex Hessian of $\rho^{\ast}v$ vanishes along $T^{1, 0} N_1$. We observe that $N_1$ is holomorphically isometric to $\mathbb{C}$. Indeed, for any fixed $w \in N_2$, both $\nabla \rho^{\ast}v(\cdot, w)$ and $J\nabla \rho^{\ast}v(\cdot, w)$ are parallel along $N_1$.

\textbf{Step 2:} We give a complete characterization of complete K\"ahler metrics with $BI \geq 0$ on $X$, proving (\ref{ir_metric_intro}) and (\ref{r_metric_intro}).

Now we have $H=0$ by Lemma \ref{BIintro_part2}. Let $\alpha(1)=e^{2\pi i \theta_1}$ and $\alpha(\tau)=e^{2\pi i \theta_2}$ for $\theta_1, \theta_2 \in [0, 1)$. We study the K\"ahler metric $\omega=\sqrt{-1}(udf_1 \wedge d\overline{f_1} +df_2 \wedge d\overline{f_2})$ at any point $(z,\xi) \in \mathbb{C}^2$. One has
\begin{align}
    df_1=f_{1,1}(z,\xi) dz +f_{1,2}(z,\xi) d\xi, \ \ \ \ df_2=f_{2,1}(z,\xi) dz +f_{2,2}(z,\xi) d\xi.
    \label{df1fd2gen}
\end{align}
The local components of $\omega$ read
\begin{align}
    &\begin{pmatrix}
        g_{z \bar z} & g_{z \bar \xi} \\
        g_{\xi \bar z}& g_{\xi \bar \xi}
    \end{pmatrix}= \nonumber \\
    &
    \begin{pmatrix}
     u(z,\xi) |f_{1,1}(z,\xi)|^2+ |f_{2,1}(z,\xi)|^2 & u(z,\xi) f_{1,1}(z,\xi) \overline{f_{1,2}(z,\xi)} +f_{2,1}(z,\xi) \overline{f_{2,2}(z,\xi)} \\
        u(z,\xi) f_{1,2}(z,\xi) \overline{f_{1,1}(z,\xi)} +f_{2,2}(z,\xi) \overline{f_{2,1}(z,\xi)} &
u(z,\xi)| f_{1,2}(z,\xi)|^2+|f_{2,2}(z,\xi)|^2  \nonumber
    \end{pmatrix}.
\end{align}
It follows from Lemma \ref{detlemma} that there exists a holomorphic function $h(z,\xi)$ such that
\begin{equation}
    \det g(z, \xi) =u(z,\xi) |h(z,\xi)|^2 >0.
\end{equation}
Note that for any fixed $\xi$, $\det g$ takes values in a compact set $K \times S^1 \subset \mathbb{C}^2$ where $K$ can be chosen as the closed fundamental  parallelogram in $\mathbb{C}$. By part (1) of Lemma \ref{nonzero}, there exists some constant $C(\xi)>0$ such that
\begin{equation}
    u(z,\xi)= \frac{C(\xi)}{|h(z,\xi)|^2}.
\end{equation}

For any fixed $\xi$, we know that $g_{z \bar z}$ is bounded, and $f_{2,1}(z,\xi)$ is bounded. Hence $f_{2,1}(z,\xi)=f_{2,1}(\xi)$ which is independent of $z$. Moreover, for a fixed $\xi$,
\begin{equation}
    u(z,\xi) |f_{1,1}(z,\xi)|^2=
    C(\xi) \big|\frac{f_{1,1}(z,\xi)}{h(z,\xi)}\big|^2
\end{equation}
is bounded. Hence there exists $C_1(\xi)$ with $f_{1,1}(z,\xi)=C_1(\xi) h(z,\xi)$. By the same argument, $f_{2,2}(z,\xi)=f_{2,2}(\xi)$ and $f_{1,2}(z,\xi)=C_2(\xi)  h(z,\xi)$. By (\ref{df1fd2gen}), we have $\frac{\partial f_2}{\partial z}=f_{2, 1}(\xi)$ and $\frac{\partial f_2}{\partial \xi}=f_{2, 2}(\xi)$. Therefore, $f_{2,1}(\xi)$ must be constant. It follows that
\begin{equation}
    g=\begin{pmatrix}
     C(\xi)|C_1(\xi)|^2+ |f_{2,1}|^2 & C(\xi) C_1(\xi) \overline{C_2(\xi)} +f_{2,1} \overline{f_{2,2}(\xi)} \\
        C(\xi) C_2(\xi) \overline{C_1(\xi)} +f_{2,2}(\xi) \overline{f_{2,1}} &
    C(\xi)|C_2(\xi)|^2 +|f_{2,2}(\xi)|^2
    \end{pmatrix}.  \label{gexpress1}
\end{equation}
Therefore $C(\xi)|C_1(\xi)|^2$ must be constant due to the K\"ahler condition $\frac{\partial g_{z\bar{z}}}{\partial \xi}=\frac{\partial g_{\xi\bar{z}}}{\partial z}$.

\textbf{Case 1:} $C_1(\xi)$ is not identically zero. In this case, $C=C(\xi)|C_1(\xi)|^2>0$ and $C_1(\xi)$ is nowhere vanishing. Let $\lambda(\xi)=\frac{C_2(\xi)}{C_1(\xi)}$. (\ref{gexpress1}) can be simplified as
\begin{equation}
    \omega= \sqrt{-1}\Big[ C (dz+\lambda(\xi)d\xi) \wedge (\overline{dz+ \lambda(\xi)d\xi}) +(f_{2,1} dz+f_{2,2}(\xi) d\xi) \wedge (\overline{f_{2,1} dz+f_{2,2}(\xi) d\xi})\Big].  \label{gexpress2}
\end{equation}
We observe that
\begin{equation*}
    (a\eta_1 +b \eta_2) \wedge (\bar a \overline{\eta_1} +\bar b \overline{\eta_2}) + (\bar b\eta_1 -\bar a \eta_2) \wedge (b \overline{\eta_1} - a \overline{\eta_2})
    =(|a|^2+|b|^2)(\eta_1 \wedge \overline{\eta_1}+ \eta_2 \wedge \overline{\eta_2}).
\end{equation*}
Let $\eta_1=\sqrt{C}(dz+\lambda(\xi) d\xi)$ and $\eta_2= f_{2,1} dz+f_{2,2}(\xi) d\xi$ while $a=\sqrt{C}$ and $b=\overline{f_{2,1}}$. We conclude that (\ref{gexpress2}) can be reduced to
\begin{equation}
    \omega=\sqrt{-1}\Big[(C+|f_{2,1}|^2) d(z + h_1(\xi)) \wedge d\overline{(z + h_1(\xi))} + |h_2 (\xi)|^2 d\xi \wedge d\overline{\xi}\Big]
    \label{gexpress3}
\end{equation}
where
\begin{equation}
h_1^{\prime}(\xi)=\frac{C \lambda(\xi)+\overline{f_{2, 1}}f_{2,2}(\xi)}{C+|f_{2,1}|^2},\ \ \ h_2(\xi)=\frac{\sqrt{C}}{\sqrt{C+|f_{2,1}|^2}} (f_{2,1}\lambda(\xi)-f_{2,2}(\xi)).
\end{equation}
Note that both $h_1$ and $h_2$ are holomorphic on $\mathbb{C}$ and $h_2$ is nowhere vanishing. It follows that (\ref{gexpress3}) corresponds to a flat K\"ahler metric on $\mathbb{C}^2$. Moreover, the induced metric along the $\xi$ direction does not need to be flat.

\textbf{Case 2:} $C_1(\xi) \equiv 0$. In this case, $f_{1,1}(z,\xi) \equiv 0$ and $f_1$ depends only on $\xi$. Then we can rewrite $u(f_1(\xi)) df_1 \wedge d\overline{f_1}$ as $u(f_1(\xi)) |\frac{\partial f_1}{\partial \xi}|^2d\xi \wedge d\overline{\xi}$. Moreover, $f_{2,1}$ is a nonzero constant and we may simplify (\ref{gexpress1}) as
\begin{equation}
    \omega=\widetilde{u}(\xi) d\xi \wedge d \bar \xi +C(dz + d h(\xi)) \wedge (d\bar z +d\overline{h(\xi)}), \label{gexpress4}
\end{equation} where
\begin{equation}
\widetilde{u}(\xi)=u(f_1(\xi)) |\frac{\partial f_1}{\partial \xi}|^2,\ \ \ h^{\prime}(\xi)=\frac{f_{2,2}(\xi)}{f_{2,1}}.
\end{equation}
Recall $\omega$ is invariant under the action
\begin{equation}
(z, \xi) \rightarrow (z+1, e^{2\pi i \theta_1} \xi),\ \ \text{and}\ \  (z, \xi) \rightarrow (z+\tau, e^{2\pi i \theta_2} \xi).   \label{thetadef2}
\end{equation}
Therefore, $\widetilde{u}(\xi)$ satisfies $\widetilde{u}(e^{2\pi i \theta_1} \xi)=\widetilde{u}(e^{2\pi i \theta_2} \xi) =\widetilde{u}(\xi)$. Moreover, $\Delta \ln \widetilde{u} \le 0$ and $h(\xi)$ is a holomorphic function with the invariant property $h(e^{2\pi i \theta_1} \xi)=h(e^{2\pi i \theta_2} \xi) =h(\xi)$. In particular, if one of $\theta_1$ and $\theta_2$ is irrational, $h(\xi)$ is forced to be constant and (\ref{gexpress4}) is reduced to
\begin{equation}
    \omega= C dz \wedge d\overline{z} +\widetilde{u}(|\xi|) d\xi \wedge d \overline{\xi}.
\end{equation}
If both $\theta_1$ and $\theta_2$ are rational, there exists a minimal positive integer $k$ so that $k\theta_1$ and $k\theta_2$ are integers. $\widetilde{u}(\xi)$ is $\mathbb{Z}_k$-invariant in the sense that $\widetilde{u}(\xi)=\widetilde{u}(e^{\frac{2\pi i}{k}}\xi)$ for any $\xi \in \mathbb{C}$. Similarly, $h(\xi)$ is an entire function of $\xi^k$. Then (\ref{gexpress4}) is reduced to
\begin{equation}
    \omega= C d(z+ h(\xi)) \wedge d(\overline{z}+ \overline{h(\xi)}) +\widetilde{u}(\xi) d\xi \wedge d \overline{\xi}.
\end{equation}
\end{proof}

It follows from (\ref{gexpress3}) and (\ref{gexpress4}) that the zero section of $X$ (which corresponds to $\{\xi=0\}$) is totally geodesic. However, the fiber is not totally geodesic in general. We give an example to show that each fiber with the induced metric has nonpositive Gauss curvature.

\begin{example}\label{negexam}
It follows from (\ref{gexpress4}) that any nonflat complete K\"ahler metric $g$ on $X$ with $BI \geq 0$ must be of the form
\begin{equation*}
    g=\begin{pmatrix}
     C & \overline{\widetilde{h}(\xi)} \\
     \widetilde{h}(\xi) & \widetilde{u}(\xi)+|\widetilde{h}(\xi)|^2
    \end{pmatrix}.
\end{equation*}
Here $\widetilde{h}(\xi)=h'(\xi)$ for $h$ defined in (\ref{gexpress4}). Let $\widetilde{u}(\xi)=\frac{1}{1+|\xi|^2}$ and $\widetilde{h}(\xi)=\xi$. Note that we choose $\widetilde{u}(\xi)|d\xi|^2$ as Hamilton's cigar soliton on $\mathbb{C}$ in \cite{Hamilton}. We may
choose either $\theta_1=\theta_2=1$ or $\theta_1=\theta_2=\frac{1}{2}$. Then either the corresponding $X$ is globally trivial or a finite cover of $X$ is so. In either case, the induced metric on the fiber $(|\xi|^2 +\frac{1}{1+|\xi|^2})|d\xi|^2$ has nonpositive Gauss curvature everywhere. To see it, we consider $f(t)=t+\frac{1}{1+t}$ on $[0, \infty)$. Then
\begin{equation*}
    t(\ln f)'=\frac{2t^2+t}{t^2+t+1}-\frac{t}{1+t},\ \ (t(\ln f)^{\prime})^{\prime}=\frac{t(4t^2 + 7t + 4)}{(1+t)^2(t^2+t+1)^2}\ge 0.
\end{equation*}
\end{example}

\section{Function theory on total spaces of rank \texorpdfstring{$2$}{TEXT} bundles of degree zero}
\label{sec4}

In this section, we study the function theory on total spaces of rank $2$ bundles of degree zero over an elliptic curve $\Sigma$ defined by (\ref{ecurve}). Our main goal is to prove Theorem \ref{res_2_intro}. During the course of the proof, we develop several function-theoretic and geometric results on the total spaces, including Proposition \ref{Xfunction} and Proposition \ref{flattotal}.

First of all, we observe an explicit holomorphic bundle isomorphism between two vector bundles which are associated with representations (\ref{gen-rep}) with the same value of $b_2-b_1\tau$.

\begin{lemma}\label{isobundle}
    Let $X$ (resp. $Y$) be the total space of $E$ (resp. $\widetilde{E}$) over the elliptic curve $\Sigma$ defined by (\ref{ecurve}). Suppose $E$ is associated with the representation $\rho_E(\gamma_k)=e^{2\pi i \theta_k}\begin{pmatrix}
        1&b_k\\0&1
    \end{pmatrix}$ and $\widetilde{E}$ is associated with
    $\rho_{\widetilde{E}}(\gamma_k)=e^{2\pi i \theta_k}\begin{pmatrix}
        1&\widetilde{b}_k\\0&1
    \end{pmatrix}$ with $k=\{1, 2\}$.

    If $b_2-b_1 \tau=\widetilde{b}_2-\widetilde{b}_1 \tau$, then  $F(z,z_1,z_2)=(z,z_1-b_1z z_2+ \widetilde{b}_1 z z_2,z_2)$ defines a holomorphic bundle isomorphism between $E$ and $\widetilde{E}$. In particular, if $E$ is indecomposable, then $b_2-b_1 \tau \ne0$.
\end{lemma}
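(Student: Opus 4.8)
The plan is to work on the universal cover $\mathbb{C} \times \mathbb{C}^2$, where $E$ and $\widetilde{E}$ are realized as quotients by the respective $\pi_1(\Sigma)$-actions $\gamma_k^{E}(z, z_1, z_2) = (\gamma_k(z),\, e^{2\pi i \theta_k}(z_1 + b_k z_2),\, e^{2\pi i \theta_k} z_2)$ and the analogous $\gamma_k^{\widetilde{E}}$ with $b_k$ replaced by $\widetilde{b}_k$. The map $F$ is fiberwise linear (for fixed $z$ it sends $(z_1, z_2) \mapsto (z_1 + (\widetilde{b}_1 - b_1) z z_2,\, z_2)$, with polynomial inverse $(z_1, z_2) \mapsto (z_1 - (\widetilde{b}_1 - b_1) z z_2,\, z_2)$) and covers the identity on the base. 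Hence to show $F$ descends to a holomorphic bundle isomorphism it suffices to verify the equivariance $F \circ \gamma_k^{E} = \gamma_k^{\widetilde{E}} \circ F$ for $k = 1, 2$.

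The core is then a direct check of these two identities. First I would test $\gamma_1$, where the base shift is $z \mapsto z+1$: the correction produced by $F$ after the shift is $(\widetilde{b}_1 - b_1)(z+1) z_2$, and upon comparing the $z_2$-coefficients on both sides (the common factor $e^{2\pi i \theta_1}$ cancels) one finds the extra constant $(\widetilde{b}_1 - b_1)$ is absorbed exactly, so the identity holds unconditionally. Next I would test $\gamma_2$, where the base shift is $z \mapsto z + \tau$: now the analogous correction is $(\widetilde{b}_1 - b_1)\tau z_2$, and matching the $z_2$-coefficients reduces to the single scalar equation $b_2 + (\widetilde{b}_1 - b_1)\tau = \widetilde{b}_2$, which is precisely the hypothesis $b_2 - b_1 \tau = \widetilde{b}_2 - \widetilde{b}_1 \tau$. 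Since $F$ is a biholomorphism of $\mathbb{C}^3$ preserving fibers linearly, the descended map is a genuine vector bundle isomorphism.

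For the final assertion I would argue by contraposition. If $b_2 - b_1 \tau = 0$, then taking $\widetilde{b}_1 = \widetilde{b}_2 = 0$ gives $\widetilde{b}_2 - \widetilde{b}_1 \tau = 0 = b_2 - b_1 \tau$, so the first part applies and yields $E \cong \widetilde{E}$, where $\widetilde{E}$ is associated to the purely scalar representation $\rho_{\widetilde{E}}(\gamma_k) = e^{2\pi i \theta_k} \operatorname{Id}$. Such a bundle splits holomorphically as $L \oplus L$, with $L \in \operatorname{Pic}^0(\Sigma)$ the line bundle determined by $\alpha(\gamma_k) = e^{2\pi i \theta_k}$, hence is decomposable; therefore $E$ is decomposable as well. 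Thus indecomposability of $E$ forces $b_2 - b_1 \tau \neq 0$.

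The computation itself is routine; the only points requiring care are the uniform cancellation of the factors $e^{2\pi i \theta_k}$ from both sides of each equivariance identity, and confirming that $F$, being fiberwise linear with polynomial inverse, defines an isomorphism of \emph{vector bundles} rather than merely a biholomorphism of the total spaces. I expect no genuine obstacle beyond this bookkeeping.
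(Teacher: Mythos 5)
Your proposal is correct and follows essentially the same route as the paper: a direct verification that $F\circ\gamma_k^{E}=\gamma_k^{\widetilde{E}}\circ F$ for $k=1,2$, with the $\gamma_1$ identity holding automatically and the $\gamma_2$ identity reducing to $b_2-b_1\tau=\widetilde{b}_2-\widetilde{b}_1\tau$. The contrapositive argument you give for the final assertion (taking $\widetilde{b}_1=\widetilde{b}_2=0$ to exhibit $E\cong L\oplus L$) is exactly what the paper leaves implicit, and your remarks on fiberwise linearity are a harmless extra precaution.
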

\begin{proof}[Proof of Lemma \ref{isobundle}]
    It is a direct computation to show that:
    \begin{align*}
        &F(z+1, e^{2\pi i \theta_1}
        \begin{pmatrix}
            z_1+b_1 z_2 \\ z_2
        \end{pmatrix})
        =(z+1,e^{2\pi i \theta_1}\begin{pmatrix}
            1&\widetilde{b}_1 \\ 0&1
        \end{pmatrix}
        \begin{pmatrix}
            z_1-b_1z z_2+\widetilde{b}_1 z z_2\\ z_2
        \end{pmatrix}).\\
        &F(z+\tau,e^{2\pi i \theta_2}
        \begin{pmatrix}
            z_1+b_2 z_2 \\ z_2
        \end{pmatrix})=(z+\tau,e^{2\pi i \theta_2}\begin{pmatrix}
            1&\widetilde{b}_2 \\ 0&1
        \end{pmatrix}
        \begin{pmatrix}
            z_1-b_1z z_2+\widetilde{b}_1 z z_2\\ z_2
        \end{pmatrix}).
    \end{align*}
    Therefore $F$ descends to a bundle isomorphism between $E$ and $\widetilde{E}$.
\end{proof}

The following result gives a complete characterization of $\mathcal{O}(X)$ when $X$ is the total space of any rank-$2$ vector bundle of degree $0$. It is part (2) of Proposition \ref{res_1_intro}.

\begin{proposition}\label{Xfunction}

Let $E$ be a rank $2$ vector bundle of degree $0$ over the elliptic curve $\Sigma$ defined in (\ref{ecurve}), and $X$ the total space of $E$. Then exactly one of the following conclusions holds.

\begin{enumerate}[label=(\Alph*)]
\item  $E=L_1 \oplus L_2$ for two line bundles $L_1$ and $L_2$ with $\operatorname{deg}(L_1)+\operatorname{deg}(L_2)=0$ and $L_1 \neq L_2$. We may choose $\xi_1$ and $\xi_2$ as local coordinates of $L_1$ and $L_2$ along the fiber directions respectively. Then any $f \in \mathcal{O}(X)$ has a Taylor series
\begin{equation}
f=\sum_{k=0}^{\infty} \sum_{p+q=k} a_{p, q} \xi_1^p \xi_2^q,    \label{taylor1}
\end{equation}
where $a_{p, q}$ is the local coefficient of any element in $H^{0}(\Sigma, L_1^{-p} \otimes L_2^{-q})$ for any integers $p, q \geq 0$. In particular, there are two subcases.
\begin{enumerate} [label=(\Roman*)]
\item  If $L_1 \neq L_2$ and $L_1, L_2 \in \operatorname{Pic}^{0} (\Sigma)$, $a_{p, q}$ is nonzero (a constant) if and only if $L_1^{-p} \otimes L_2^{-q}=\mathcal{O}$.
\item  If $\operatorname{deg}(L_1)=-\operatorname{deg}(L_2)>0$, then $a_{p, q}=
     0$ if $p>q$. Moreover, $a_{p, p}=0$ if $L_2 \neq L_1^{-1}$, and it can be any constant if $L_2=L_1^{-1}$. If $p<q$, $a_{p, q}$ is determined by $H^{0}(\Sigma, L_1^{-p} \otimes L_2^{-q})$ with $\operatorname{dim} H^0(\Sigma, L_1^{-p} \otimes L_2^{-q})=(q-p)\operatorname{deg}(L_1)$.
\end{enumerate}

\item  $E$ is associated with a representation defined by (\ref{gen-rep}). For any given $f \in \mathcal{O}(X)$, we consider its lift (still denoted by $f$ for simplicity) in $\mathcal{O}(\mathbb{C}^3)$. Then we have
  \begin{enumerate} [label=(\Roman*)]
      \item
       If $\theta_1$ or $\theta_2$ is irrational, then $\mathcal{O}(X)=\{\text{constant}\}$.
    \item
    If both $\theta_1$ and $\theta_2$ are rational, we may assume that $m$ is the smallest positive integer such that $m\theta_1$, $m\theta_2 \in \mathbb Z$. Then we consider two subcases.
    \begin{enumerate} [label=(\arabic*)]
        \item If $b_2 \ne b_1 \tau$, then any lift of $f \in \mathcal{O}(X)$ admits a Taylor series
        \begin{equation}
            f(z, z_1, z_2)=\sum_{k \geq 0, \, m|k}^{\infty} c_k z_2^{k},\ \text{where}\ c_k \in \mathbb{C}.  \label{taylor2}
        \end{equation}
        \item If $b_2=b_1 \tau$, then any lift of $f \in \mathcal{O}(X)$ admits a Taylor series
        \begin{equation}
            f(z, z_1, z_2)=\sum_{k \geq 0,\,m|k}^{\infty}\, \sum_{p+q=k}  c_{p, q} (z_1-b_1 z z_2)^p z_2^q, \ \text{where}\ c_{p, q} \in \mathbb{C},\,\text{for integers}\ p, q \geq 0.   \label{taylor3}
        \end{equation}
    \end{enumerate}
    \end{enumerate}
    In Case (B), $E$ is decomposable (hence isomorphic to $L \oplus L$ for some $L \in \operatorname{Pic}^{0} (\Sigma)$ by Lemma \ref{isobundle}) if and only if $b_2=b_1 \tau$.

\end{enumerate}
\end{proposition}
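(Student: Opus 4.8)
The plan is to organize the proof around the trichotomy of degree-zero rank $2$ bundles recalled in the introduction. Case (A) collects Type I with distinct summands and Type II, while Case (B) packages the representation (\ref{gen-rep}), which covers Type III together with the split bundle $L\oplus L$; since (A) is required to have $L_1\neq L_2$, these are disjoint and exhaustive, which is what ``exactly one'' means. For Case (A), write $E=L_1\oplus L_2$ and use the fibrewise $\mathbb{C}^\ast\times\mathbb{C}^\ast$ action scaling $(\xi_1,\xi_2)$. Decomposing any $f\in\mathcal{O}(X)$ into multihomogeneous pieces (the fibrewise Taylor expansion is automatic from holomorphy), the weight-$(p,q)$ component is $a_{p,q}\,\xi_1^p\xi_2^q$ with $a_{p,q}\in H^0(\Sigma,L_1^{-p}\otimes L_2^{-q})$, which is (\ref{taylor1}). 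The three bullet points then follow at once from Riemann--Roch on the elliptic curve $\Sigma$ (where $K_\Sigma=\mathcal{O}$): a line bundle $N$ satisfies $h^0(N)=0$ for $\deg N<0$, $h^0(N)=\deg N$ for $\deg N>0$, and $h^0(N)\in\{0,1\}$ according to whether $N\cong\mathcal{O}$ when $\deg N=0$. Substituting $N=L_1^{-p}\otimes L_2^{-q}$ with $\deg N=-p\deg L_1-q\deg L_2$ reproduces subcases (I) and (II) verbatim.

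For Case (B) I would lift $f$ to an entire function on $\mathbb{C}^3$ and pass to the triangular (hence invertible) coordinate $w=z_1-b_1zz_2$. A direct computation shows that under the deck transformations $w\mapsto e^{2\pi i\theta_1}w$ and $z_2\mapsto e^{2\pi i\theta_1}z_2$ under $\gamma_1$, while $w\mapsto e^{2\pi i\theta_2}(w+c\,z_2)$ and $z_2\mapsto e^{2\pi i\theta_2}z_2$ under $\gamma_2$, where $c:=b_2-b_1\tau$. Since the action is fibre-linear and preserves total degree in $(w,z_2)$, each homogeneous fibre-degree $k$ of $f$ is invariant separately. Writing $f=\sum_{p,q\ge 0}g_{p,q}(z)\,w^p z_2^q$ and matching coefficients of $w^a z_2^b$ with $k=a+b$, the $\gamma_1$-invariance gives $g_{a,k-a}(z)=e^{2\pi i\theta_1 k}g_{a,k-a}(z+1)$, and the $\gamma_2$-invariance gives the triangular system
\[
g_{a,k-a}(z)=e^{2\pi i\theta_2 k}\sum_{p=a}^{k}\binom{p}{a}c^{p-a}\,g_{p,k-p}(z+\tau).
\]
I would analyze this by a descending induction on $a$ from $k$ to $0$.

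The two elementary inputs are: (i) if $h$ is entire with $|h(z+1)|=|h(z+\tau)|=|h(z)|$, then $h$ is bounded, hence constant by Liouville; and (ii) if $h$ is entire, $1$-periodic, and satisfies $h(z+\tau)=h(z)+A$ for a constant $A$, then expanding $h$ as a Laurent series in $e^{2\pi i z}$ and comparing coefficients forces $A=0$ and $h$ constant (using $e^{2\pi i n\tau}\neq 1$ for $n\neq 0$ since $\operatorname{Im}\tau>0$). If $\theta_1$ or $\theta_2$ is irrational, the top equation $g_{k,0}(z)=e^{2\pi i\theta_1 k}g_{k,0}(z+1)=e^{2\pi i\theta_2 k}g_{k,0}(z+\tau)$ with (i) gives $g_{k,0}=0$ for $k\ge 1$, and the argument descends to kill every $g_{a,k-a}$, leaving only constants and hence (B)(I). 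If both $\theta_j$ are rational with least common denominator $m$, the phases equal $1$ exactly when $m\mid k$, so all degrees with $m\nmid k$ vanish by the same reasoning. For $m\mid k$: when $c=0$ (the split case $b_2=b_1\tau$, by Lemma \ref{isobundle}) the $\gamma_2$-system decouples to $g_{a,k-a}(z)=g_{a,k-a}(z+\tau)$, so by (i) each $g_{a,k-a}$ is an arbitrary constant, giving (\ref{taylor3}).

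The crux is the indecomposable case $c\neq 0$ with $m\mid k$, where the system reads $g_{a,k-a}(z)=g_{a,k-a}(z+\tau)+\sum_{p>a}\binom{p}{a}c^{p-a}g_{p,k-p}(z+\tau)$. Running the descending induction, at stage $a$ the higher coefficients $g_{p,k-p}$ $(p>a)$ are already constants $C_p$, so the remainder is a constant $D_a$; input (ii) then shows simultaneously that $g_{a,k-a}$ is a constant $C_a$ and that $D_a=0$. Because $D_a=(a+1)c\,C_{a+1}+\sum_{p\ge a+2}\binom{p}{a}c^{p-a}C_p$ with the higher $C_p$ already shown to vanish, $D_a=0$ forces $C_{a+1}=0$. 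Descending from $a=k$ to $a=0$ annihilates $C_1,\dots,C_k$ and leaves only $C_0=g_{0,k}$ free, yielding $f=\sum_{m\mid k}c_k z_2^k$ as in (\ref{taylor2}). This propagation is the main obstacle: the naive double-periodicity argument only shows the top coefficient $g_{k,0}$ is \emph{some} constant, and it is the compatibility forced by the nonzero unipotent shift $c$ together with the Laurent rigidity in (ii) that drives the vanishing down the tower and thereby detects the indecomposability of $E$. The final decomposability dichotomy ($E\cong L\oplus L$ iff $b_2=b_1\tau$) is exactly Lemma \ref{isobundle}.
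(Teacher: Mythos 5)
Your proof is correct and follows essentially the same route as the paper: a fibrewise Taylor expansion, Liouville-type arguments on the coefficient functions, and a descending triangular induction in which the nonvanishing of $c=b_2-b_1\tau$ forces all mixed coefficients to die, leaving only powers of $z_2$. The single difference is presentational: you perform the change of variables $w=z_1-b_1zz_2$ at the outset, so the coefficients become genuine constants and the $\gamma_2$-relation is a clean shift equation handled by your input (ii), whereas the paper works with the raw monomials $z_1^{k-t}z_2^t$, tracks the (affine, then polynomial) growth of the $f_{k-t,t}(z)$, and only resums into powers of $z_1-b_1zz_2$ at the end in (\ref{sum_re}) --- the same normalization its Lemma \ref{isobundle} provides.
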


\begin{proof}[Proof of Proposition \ref{Xfunction}]

If $E=L_1 \oplus L_2$, then any holomorphic function on its total space $X$ has a nice form of Taylor series. We refer the reader to \cite[Corollary 3.5]{WY2025} for the corresponding Taylor series in the case of a line bundle. In our case, let $\xi_1$ and $\xi_2$ denote local coordinates of $L_1$ and $L_2$ along the fiber directions respectively. We may check
\begin{align}
    a_{p,q} \xi_1^p \xi_2^q\ |\ p+q=n, \text{where}\  a_{p, q}\ \text{are local coefficients of some}\ \sigma \in H^{0}(\Sigma, L_1^{-p} \otimes L_2^{-q}),
\end{align} are globally defined on $X$. In general, for any given $f \in \mathcal{O}(X)$ we have the Taylor series as in (\ref{taylor1}).

Regarding the coefficients $a_{p, q}$, we recall a basic result on $\operatorname{dim} H^0(\Sigma, L)$ for any line bundle $L$ over the elliptic curve $\Sigma$. It states that $H^0(\Sigma, L)$ is nonzero if and only if $L=\mathcal{O}$. In this case $H^0(\Sigma, L)$ is spanned by constant sections. In the case of $c_1(L)>0$ and $\operatorname{dim} H^0(\Sigma, L)=\operatorname{deg}(L)$, we refer to \cite[Theorem 6.44 on p.55]{Debarre} for a proof. For example, if we assume $\operatorname{deg}(L_1)=\operatorname{deg}(L_2)=0$ and $L_1 \neq L_2$, by Appel-Humbert's theorem, we may assume $L_1=L(0, \alpha)$ and $L_2=L(0, \beta)$ where $\alpha, \beta: \Gamma \rightarrow S^1$ are two characters. In this case $H^{0}(\Sigma, L_1^{-p} \otimes L_2^{-q})$ is nonzero if and only if $\alpha(p\gamma) \beta(q\gamma)=1$ for any $\gamma \in \mathbb{Z}\text{-span}\{1, \tau\}$.

Next we consider the case that $E$ is associated with a representation defined by (\ref{gen-rep}). For any given $f \in \mathcal{O}(X)$, we consider its lift $f \in \mathcal{O}(\mathbb{C}^3)$. For any $(z, z_1, z_2) \in \mathbb{C}^3$, $f$ satisfies
\begin{align}
    f(z,z_1,z_2)=f(z+1,e^{2\pi i \theta_1}(z_1+b_1 z_2), e^{2\pi i \theta_1}z_2)=f(z+\tau,e^{2\pi i \theta_2}(z_1+b_2 z_2), e^{2\pi i \theta_2}z_2).     \label{pi-inv}
\end{align}
We consider its Taylor expansion
\begin{equation}
    f(z,z_1,z_2)=f_0(z)+\sum_{k=1}^{+\infty} \sum_{t=0}^k f_{k-t,t}(z)z_1^{k-t} z_2^t.  \label{ftaylor}
\end{equation}
It follows from (\ref{pi-inv}) that
\begin{align*}
    f(z,z_1,z_2)&=f_0(z+1)+\sum_{k=1}^{+\infty} e^{2\pi i \theta_1 k} \sum_{t=0}^k f_{k-t,t}(z+1)(z_1+b_1z_2)^{k-t} z_2^t,\\
   f(z,z_1,z_2)&=f_0(z+\tau)+\sum_{k=1}^{+\infty} e^{2\pi i \theta_2 k} \sum_{t=0}^k f_{k-t,t}(z+\tau)(z_1+b_2 z_2)^{k-t} z_2^t.
\end{align*}
We proceed to compare the coefficients of the homogeneous monomials of $z_1$ and $z_2$. First of all, $f_0(z)$ must be constant since it is doubly periodic. For $k\ge 1$,
\begin{equation}
        f_{k,0}(z)=e^{2\pi i \theta_1 k} f_{k,0}(z+1)=e^{2\pi i \theta_2 k} f_{k,0}(z+\tau).
\end{equation}
It follows that $f_{k,0}$ is bounded, hence constant. Whenever one of $e^{2\pi i \theta_1 k}$ and $e^{2\pi i \theta_2 k}$ differs from $1$, we have $f_{k,0}=0$. We may further show $f_{k-1,1}=\cdots=f_{0,k}=0$ by the same argument.

If both $\theta_1$ and $\theta_2$ are rational, then we consider any positive integer $k$ so that $e^{2\pi i \theta_1 k}=e^{2\pi i \theta_2 k}=1$. Now that $f_{k,0}(z)=c_{k,0}$ which is a constant. Recall the binomial identity states that $(z_1+b_1 z_2)^k=\sum_{p=0}^k C^{p}_{k} b_1^{p} z_1^{k-p} z_2^{p}$. We consider the coefficient of $z_1^{k-1}z_2$ in (\ref{ftaylor}) and get
\[
f_{k-1,1}(z)=f_{k-1,1}(z+1)+c_{k,0}kb_1=f_{k-1,1}(z+\tau)+c_{k,0}kb_2.
\]
Then $f_{k-1,1}(z)$ is of linear growth and we may assume $f_{k-1,1}(z)=Az+B$. Then $A+c_{k,0}kb_1=A\tau+c_{k,0}kb_2=0$. It follows that $f_{k,0}=c_{k,0} \ne 0$ only if $b_2= b_1 \tau$.

To sum up, when $b_2 \ne b_1 \tau$ and $k \geq 1$, we have $f_{k,0}=0$ and $f_{k-1, 1}=B$. If $k \geq 2$, we may apply the above argument on $f_{k-2, 2}$. It follows that
\[
f_{k-2,2}(z)=f_{k-2,2}(z+1)+B(k-1)b_1=f_{k-2,2}(z+\tau)+B(k-1)b_2.
\]
Similarly, we have $f_{k-2,2}(z)$ is of linear growth. Hence $f_{k-1, 1}=0$ and $f_{k-2,2}(z)$=constant. In the end, we have
$f_{k,0}=f_{k-1,1}=\cdots=f_{1,k-1}=0$ and $f_{0,k}$ is constant. Hence $f$ only depends on $z_2$ and the Taylor series (\ref{taylor2}) holds.

When $b_2=b_1 \tau$ and $f_{k,0}(z) =c_{k,0} \neq 0$, we have
    \begin{align*}
        f_{k-1,1}(z)&=-c_{k,0} b_1kz+c_{k-1,1}.\\
        f_{k-2,2}(z)&=c_{k,0}(-1)^2\frac{k(k-1)}{2}b_1^2 z^2+c_{k-1,1}(k-1)(-b_1 z)+c_{k-2,2}. \\
        &\cdots \\
        f_{k-t,t}(z)&=\sum_{s=0}^t c_{k-s,s} C_{k-s}^{t-s}(-b_1 z)^{t-s}.
    \end{align*}
       Here $c_{k-t,t}$ are constants determined by $f_{k-t, t}(z)$. Hence
\begin{equation}
    \sum_{t=0}^k f_{k-t,t}(z)z_1^{k-t}z_2^t =\sum_{t=0}^k c_{k-t,t}(z_1 -b_1 zz_2)^{k-t}z_2^t.  \label{sum_re}
\end{equation}
To sum up, we have proved (\ref{taylor3}). Finally, we consider any vector bundle $E$ in Cases (B) and prove that it is decomposable if and only if $b_2=b_1 \tau$. Obviously, the `if' direction follows from Lemma \ref{isobundle}. Conversely, by Lemma \ref{isobundle} it suffices to show $E$ is indecomposable under the assumption of $b_1=0$ and $b_2 \neq 0$. We observe that any vector bundle $E$ in Cases (B) is isomorphic to $L \otimes \widehat{E}$ where $L \in \operatorname{Pic}^{0}(\Sigma)$ and $\widehat{E}$ is associated with the representation (\ref{gen-rep}) with parameters $\theta_1=\theta_2=b_1=0$ and $b_2 \neq 0$. But $\widehat{E}$ is indecomposable by Lemma \ref{fact1}, so is $E=L \otimes \widehat{E}$.

\end{proof}

Our next result determines which total spaces of rank $2$ vector bundles of degree $0$ admit complete flat K\"ahler metrics. It is exactly part (3) of Proposition \ref{res_1_intro}.

\begin{proposition}\label{flattotal}
Let $E$ be a rank $2$ vector bundle of degree $0$ over the elliptic curve $\Sigma$ defined in (\ref{ecurve}), and $X$ the total space of $E$. Then $X$ admits a complete flat K\"ahler metric if and only if $E$ is isomorphic to $L_1 \oplus L_2$ for any given $L_1, L_2 \in \operatorname{Pic}^{0} (\Sigma)$.
\end{proposition}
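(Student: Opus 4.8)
The plan is to prove the two implications separately: the ``if'' direction by exhibiting a flat metric, and the converse by extracting a flatness obstruction from the holomorphic tangent bundle of $X$ along its zero section.

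For the ``if'' direction, suppose $E$ is of \ref{T1}, say $E=L_1\oplus L_2$ with $L_1,L_2\in\operatorname{Pic}^0(\Sigma)$. By Appel--Humbert we may write $L_i=L(0,\alpha_i)$, so the defining action has $H=0$; this is exactly the situation of Proposition \ref{rankkflat} with $k=2$, $n=3$ and $H_1=H_2=0$, which produces a complete flat K\"ahler metric on $X$. Equivalently, since each $\alpha_i$ is $S^1$-valued, the Euclidean form $\sqrt{-1}(dz\wedge d\overline{z}+d\xi_1\wedge d\overline{\xi_1}+d\xi_2\wedge d\overline{\xi_2})$ is invariant under the deck action and descends to $X$.

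For the converse, assume $X$ carries a complete flat K\"ahler metric $g$. The decomposable cases are quick: if $E$ is of \ref{T2}, then $E=L_1\oplus L_2$ with $\operatorname{deg}L_1=-\operatorname{deg}L_2>0$, so the corresponding $H_1\neq 0$ and Proposition \ref{rankkflat} forbids a complete flat K\"ahler metric. It remains to rule out \ref{T3}. The key point is that flatness of $g$ makes the Chern connection on $T^{1,0}X$ flat with holonomy in $U(3)$, so $T^{1,0}X$ is a flat unitary holomorphic bundle whose monodromy is the unitary representation $\rho'\colon\pi_1(X)\to U(3)$ given by parallel transport. Since $X$ deformation retracts onto its zero section, the inclusion $\iota\colon\Sigma\hookrightarrow X$ induces an isomorphism $\pi_1(\Sigma)\cong\pi_1(X)$, so $\iota^{*}T^{1,0}X$ is a flat unitary bundle over $\Sigma$ with abelian monodromy $\rho'$. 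As $\pi_1(\Sigma)\cong\mathbb{Z}^2$ is abelian, $\rho'$ is simultaneously diagonalizable, hence $\iota^{*}T^{1,0}X$ splits holomorphically as $L_1'\oplus L_2'\oplus L_3'$ with each $L_i'\in\operatorname{Pic}^0(\Sigma)$. On the other hand, the vertical--horizontal splitting along the zero section gives a canonical holomorphic isomorphism $T^{1,0}X|_\Sigma\cong T^{1,0}\Sigma\oplus E\cong\mathcal{O}\oplus E$, using $T^{1,0}\Sigma\cong\mathcal{O}$. Thus $\mathcal{O}\oplus E\cong L_1'\oplus L_2'\oplus L_3'$ as holomorphic bundles over $\Sigma$. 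By the Krull--Schmidt theorem for holomorphic bundles over a compact Riemann surface, the decomposition into indecomposables is unique up to order and isomorphism; but the left-hand side has indecomposable summands of ranks $\{1,2\}$ (the rank-$2$ factor $E$ being indecomposable for \ref{T3}, cf.\ Lemma \ref{fact1} and Lemma \ref{isobundle}), while the right-hand side has three line bundles of ranks $\{1,1,1\}$. This contradiction excludes \ref{T3}, so $X$ admits a complete flat K\"ahler metric only when $E$ is of \ref{T1}.

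I expect the indecomposable case \ref{T3} to be the main obstacle. A direct approach, lifting $g$ to $\mathbb{C}^3$, writing $\omega=\sqrt{-1}\sum_j df_j\wedge d\overline{f_j}$ with $F\in\operatorname{Aut}(\mathbb{C}^3)$ and comparing coefficients along the zero section as in Proposition \ref{rankkflat} and Lemma \ref{BIintro_part2}, leads to a component such as $g_{2\overline{2}}$ growing without bound along the orbit $z\mapsto z+N\tau$; however this merely reflects that the fiber frame $\partial_{z_1},\partial_{z_2}$ transforms by the non-unitary matrix $\rho(\gamma_2)$, and is not an honest geometric invariant, so no contradiction follows from growth alone. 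The flat-bundle argument above sidesteps this by encoding precisely the non-unitarizability of $\rho$ in the failure of $\mathcal{O}\oplus E$ to be polystable. The steps requiring care are the holomorphic identification $T^{1,0}X|_\Sigma\cong\mathcal{O}\oplus E$, the agreement of the flat (Chern) holomorphic structure on $\iota^{*}T^{1,0}X$ with its structure as a restriction, and the precise application of uniqueness of the indecomposable decomposition.
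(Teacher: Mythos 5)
Your proof is correct, and for the decisive indecomposable case (\ref{T3}) it takes a genuinely different route from the paper. The paper lifts the flat metric to $\mathbb{C}^3$, writes $\rho^{*}\omega_0=\sqrt{-1}\sum_j df_j\wedge d\overline{f_j}$ for some $F\in\operatorname{Aut}(\mathbb{C}^3)$, and compares coefficients of $dz\wedge d\overline{z}$, $dz_1\wedge d\overline{z_1}$ and $dz_2\wedge d\overline{z_1}$ along the zero section under the deck action; Liouville's theorem forces the entire function $h(z)=\sum_i\eta_i(z)\overline{\xi_i}$ to be affine with slope simultaneously proportional to $-b_1$ and to $-b_2/\tau$, whence $b_2=b_1\tau$ and $E\cong L\oplus L$ by Lemma \ref{isobundle}. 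You instead invoke the holonomy principle: a flat K\"ahler metric makes the Chern connection on $T^{1,0}X$ flat and unitary, its restriction to the zero section is the flat bundle of an abelian unitary representation of $\pi_1(\Sigma)\cong\mathbb{Z}^2$, hence a sum of degree-zero line bundles, and Atiyah's Krull--Schmidt theorem then contradicts $T^{1,0}X|_{\Sigma}\cong\mathcal{O}\oplus E$ with $E$ indecomposable. The two identifications you flag as delicate do hold: the zero section gives a canonical holomorphic splitting $T^{1,0}X|_{\Sigma}\cong T^{1,0}\Sigma\oplus E$, and the Chern connection of a restricted Hermitian holomorphic bundle along a complex submanifold is the restriction of the Chern connection, so the flat holomorphic structure agrees with the restricted one. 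What your approach buys: it nowhere uses completeness in the \ref{T3} case, so it in fact shows that the total space of an indecomposable rank-two degree-zero bundle admits no flat K\"ahler metric at all, and the same degree count disposes of \ref{T2} without appealing to Proposition \ref{rankkflat}. What the paper's computation buys is uniformity and explicitness: the same coefficient-comparison machinery is reused in Lemma \ref{BIintro_part2} and Theorem \ref{biholoC}, and it outputs the concrete relation $b_2=b_1\tau$ rather than a bare contradiction.
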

\begin{proof}[Proof of Proposition \ref{flattotal}]

    Let $\rho: \widetilde{X} \rightarrow X$ be the universal covering. If $X$ admits a complete flat K\"ahler metric $\omega_0$, then there exists a biholomorphic map $F=(f_0, f_1, f_2) \in \operatorname{Aut}(\mathbb{C}^3)$ such that
    \begin{equation}
        \rho^* \omega_0=\sqrt{-1}(df_0 \wedge d\overline {f_0}+ df_1 \wedge d\overline {f_1}+df_2 \wedge d\overline {f_2}).
    \end{equation}
    At any point in the preimage of the zero section section of $E$ under $\rho$ (i.e. $\{z_1=z_2=0\}$), one has:
    \begin{align}
        df_i(z,z_1,z_2)=\lambda_i(z)dz+\xi_i(z)dz_1+\eta_i(z)dz_2, \ \text{for}\ 0 \leq i \leq 2.  \label{dfizero}
    \end{align}
    As in the proof of Proposition \ref{rankkflat}, the idea is to analyze the invariance property of $\rho^{\ast}\omega_0$ along the zero section. It remains to deal with the case that $E$ is associated with a representation (\ref{gen-rep}). Note that $\rho^*\omega_0$ is preserved under the action defined by (\ref{gen-rep}). On $\{z_1=z_2=0\}$, we compare the coefficients in front of $dz \wedge d\bar z$ and get that $\Lambda(z)=|\lambda_0(z)|^2+|\lambda_1(z)|^2+|\lambda_2(z)|^2$ is doubly periodic. As $\Lambda$
    is a subharmonic function while attains its maximum in the interior, $\Lambda(z)$ is constant and so is each of $\lambda_0$, $\lambda_1$, $\lambda_2$. Similarly, $\xi_0$, $\xi_1$, $\xi_2$ are constant after comparing the coefficients in front of $dz_1 \wedge d\bar z_1$. Now (\ref{dfizero}) is reduced to
    \begin{align*}
        df_i(z,z_1,z_2) =\lambda_i dz+\xi_i dz_1+\eta_i(z)dz_2,\ \ \text{for}\ 0 \leq i \leq 2.
    \end{align*}
    Next we compare the coefficients in front of $dz_2 \wedge d\bar z_1$. Then
    \begin{align}\label{des}
        &\eta_0(z)\overline {\xi_0} +\eta_1(z)\overline {\xi_1}+\eta_2(z)\overline {\xi_2}\\
        =&\eta_0(z+1)\overline {\xi_0} +b_1 |\xi_0|^2+\eta_1(z+1)\overline {\xi_1}+b_1 |\xi_1|^2+\eta_2(z+1)\overline {\xi_2}+b_1 |\xi_2|^2  \nonumber\\
        =&\eta_0(z+\tau)\overline {\xi_0} +b_2 |\xi_0|^2+\eta_1(z+\tau)\overline {\xi_1}+b_2 |\xi_1|^2+\eta_2(z+\tau)\overline {\xi_2}+b_2 |\xi_2|^2.  \nonumber
    \end{align}
    As $h(z)=\eta_0(z)\overline {\xi_0} +\eta_1(z)\overline {\xi_1}+\eta_2(z)\overline {\xi_2}$ is holomorphic, by Liouville's theorem $h(z)=(c_0+c_1+c_2)z+B$, where $c_k z$ is the first order term of $\eta_k(z)\overline {\xi_k}$. We plug the expression of $h(z)$ into equation (\ref{des}) and get
    \begin{align*}
        &c_0+c_2+c_2=-b_1(|\xi_0|^2+|\xi_1|^2+|\xi_2|^2), \\
        &(c_0+c_2+c_2)\tau=-b_2(|\xi_0|^2+|\xi_1|^2+|\xi_2|^2).
    \end{align*}
    It follows that $b_2=b_1 \tau$. By Lemma \ref{isobundle}, $E$ is isomorphic to $L \oplus L$ for some $L \in \operatorname{Pic}^{0} (\Sigma)$.

\end{proof}

Recall the fundamental domain $\mathcal{F}$ of modular group acting on the upper half plane $\mathbb{H}$:
\begin{align}
\mathcal{F}&=\{ z \in \mathbb{H} \ |\ |z|>1.\ |\operatorname{Re}z|<\frac{1}{2}\},  \nonumber \\
\widetilde{\mathcal{F}}&=\mathcal{F} \cup \{ z \in \mathbb{H} \ |\ |z|=1, -\frac{1}{2}<|\operatorname{Re}z| \leq 0, \ \text{or}\ |z| \geq 1, \operatorname{Re}z=-\frac{1}{2}\}. \label{funda}
\end{align}

The following result completes the biholomorphic classification on total spaces of any rank two vector bundles of degree zero over an elliptic curve. It is a restatement of Theorem \ref{res_2_intro}.

\begin{theorem}\label{biholoC}
    Let $\Sigma$ (resp. $\widetilde{\Sigma}$) be the elliptic curve generated by $\{1, \tau\}$ (resp. $\{1, \widetilde{\tau}\}$), and $X$ (resp. $\widetilde X$) the total space of a vector bundle $E$ (resp. $\widetilde E$) of rank $2$ and degree $0$ over $\Sigma$ (resp. $\widetilde{\Sigma}$). We may further assume that $\tau,\, \widetilde{\tau} \in \mathcal{\widetilde{F}}$ defined in (\ref{funda}). If $X$ is biholomorphic to $\widetilde{X}$, then $\tau=\widetilde{\tau}$ and $E$ is bundle isomorphic to $\widetilde{E}$.
\end{theorem}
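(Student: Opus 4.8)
The plan is to lift $f$ to an automorphism of the common universal cover $\mathbb{C}^3$ and extract both the modular relation between $\tau$ and $\widetilde\tau$ and a bundle isomorphism from the way the lift intertwines the two deck-group actions. First I would lift $f$ to $F=(F_0,F_1,F_2)\in\operatorname{Aut}(\mathbb{C}^3)$. Since $X$ deformation retracts onto its zero section $\Sigma$, the induced map $\pi_1(X)\cong\pi_1(\widetilde X)$ is an isomorphism $\mathbb{Z}^2\to\mathbb{Z}^2$ carried by an integer matrix $\left(\begin{smallmatrix}p&q\\r&s\end{smallmatrix}\right)\in\operatorname{GL}(2,\mathbb{Z})$, and conjugating the generators $\gamma_1,\gamma_2$ of the deck group by $F$ gives the intertwining relations $F\circ\gamma_1=\widetilde\gamma_1^{\,p}\widetilde\gamma_2^{\,q}\circ F$ and $F\circ\gamma_2=\widetilde\gamma_1^{\,r}\widetilde\gamma_2^{\,s}\circ F$. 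The key observation is that the base-translation part of any deck transformation of $\widetilde X$ does not depend on the fibre variables, so $\xi_0(z):=F_0(z,0,0)$ obeys $\xi_0(z+1)=\xi_0(z)+(p+q\widetilde\tau)$ and $\xi_0(z+\tau)=\xi_0(z)+(r+s\widetilde\tau)$; hence $\xi_0'$ is doubly periodic, so constant by Liouville, and $\xi_0(z)=Az+B$ with $A=p+q\widetilde\tau$ and $A\tau=r+s\widetilde\tau$.

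I would then note $A\neq 0$, since $A=0$ forces $p=q=r=s=0$ and contradicts the invertibility of the matrix; therefore $\tau=\frac{s\widetilde\tau+r}{q\widetilde\tau+p}$. The matrix has determinant $\pm 1$, and because the corresponding M\"obius map preserves the upper half plane its determinant is $+1$, so it lies in $\operatorname{SL}(2,\mathbb{Z})$. Since the fundamental domain $\widetilde{\mathcal{F}}$ in (\ref{funda}) meets each $\operatorname{SL}(2,\mathbb{Z})$-orbit in exactly one point, the assumption $\tau,\widetilde\tau\in\widetilde{\mathcal{F}}$ forces $\tau=\widetilde\tau$, which settles the first assertion and shows in addition that the matrix stabilizes $\tau$.

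It remains to produce a bundle isomorphism. The trichotomy \ref{T1}/\ref{T2}/\ref{T3} is itself a biholomorphic invariant: Type I is precisely the class whose total space carries a complete flat K\"ahler metric by Proposition \ref{flattotal}, while Types II and III are separated by Proposition \ref{Xfunction}, since for an indecomposable $E$ the ring $\mathcal{O}(X)$ is either $\mathbb{C}$ or polynomials in the single fibre coordinate $z_2$, whereas in Type II it contains the richer families arising from $H^0(\Sigma,L_1^{-p}\otimes L_2^{-q})$ with $p<q$, so that the number of analytically independent holomorphic functions differs. Hence $E$ and $\widetilde E$ are of the same type. Within each type I would normalize the representation, using Lemma \ref{isobundle} to arrange $b_1=\widetilde b_1=0$ in the indecomposable case, and Taylor expand $F_1,F_2$ along the zero section; the intertwining relations successively force the fibre coefficients to vanish or be constant, yield the character condition $\left(\begin{smallmatrix}p&q\\r&s\end{smallmatrix}\right)\binom{\widetilde\theta_1}{\widetilde\theta_2}\equiv\binom{\theta_1}{\theta_2}\pmod{\mathbb{Z}^2}$, and finally let me read off an explicit fibre-linear $F$ descending to a bundle isomorphism.

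The hardest step I expect is the matching at the two special moduli $\tau=\sqrt{-1}$ and $\tau=-\tfrac{1}{2}+\tfrac{\sqrt{3}}{2}\sqrt{-1}$, where the stabilizer of $\tau$ in $\operatorname{SL}(2,\mathbb{Z})$ is nontrivial and the matrix need not be the identity. There one cannot compare $\theta_k$ with $\widetilde\theta_k$ directly; instead one must solve the character condition jointly with the constraints on $b_2,\widetilde b_2$ and exhibit an explicit automorphism such as $F(z,z_1,z_2)=\bigl((p+q\tau)z,\;c_1z_1+c_2z\,z_2,\;z_2\bigr)$ that intertwines the two modified representations, checking both that the condition is necessary and that it suffices to produce a holomorphic bundle isomorphism. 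A secondary subtlety is verifying that the function-theoretic invariants genuinely separate Type II from Type III in every subcase of characters, so that no degenerate configuration collapses the distinction.
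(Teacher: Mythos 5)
Your outline is essentially the paper's: lift the biholomorphism to $F\in\operatorname{Aut}(\mathbb{C}^3)$, use the intertwining with the two deck actions to show $F_0(z,0,0)=Az+B$ with $A=p+q\widetilde\tau$, $A\tau=r+s\widetilde\tau$, conclude $\tau=\widetilde\tau$ from the fundamental domain, and then Taylor-expand $F_1,F_2$ along the zero section to extract the character condition (\ref{theta}) and an explicit fibre-preserving isomorphism. Two of your local choices genuinely differ from the paper, and both work. First, you get $\operatorname{SL}(2,\mathbb{Z})$ directly from the induced isomorphism of $\pi_1\cong\mathbb{Z}^2$ together with preservation of the upper half plane; the paper instead first shows (after translating away the constants $\lambda_0,\eta_0$) that $F$ carries zero section to zero section and then invokes the standard classification of isomorphisms of elliptic curves. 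Your route is cleaner and avoids that normalization. Second, for the mixed case (\ref{T2} versus \ref{T3}) you propose a function-theoretic obstruction — in \ref{T3} every element of $\mathcal{O}(X)$ depends on $z_2$ alone by (\ref{taylor2}), so there is at most one analytically independent holomorphic function, whereas in \ref{T2} the sections $a_{0,1}\xi_2$ and $a_{1,2}\xi_1\xi_2^2$ from (\ref{taylor1}) have generically independent differentials — while the paper (Case 4) instead shows directly that the intertwining forces $\xi_1=\lambda_1=\eta_1=0$ via the doubly periodic quantity $|\eta_1|^2e^{\pi H|z|^2}$ with $H>0$, contradicting the nondegeneracy of the Jacobian. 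Your invariant does separate the two types in every character subcase (the point you flag), so this is a valid alternative.

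The one place your sketch underestimates the work is the \ref{T2}--\ref{T2} case. There the statement that the intertwining relations "force the fibre coefficients to vanish or be constant" is not accurate: the surviving coefficients are exponentials of the form $Ce^{\mp\pi\widetilde{H}A\overline{B}z}$, and before one can write down the bundle isomorphism (\ref{Phidef}) one must first prove that the degrees match, $H=\widetilde{H}$, and that $|A|=1$. The paper does this by showing $|\eta_2|^2e^{\pi(\widetilde{H}|A|^2-H)|z|^2+\cdots}$ is doubly periodic, which together with $|A|^2\ge1$ (a consequence of $\widetilde\tau\in\widetilde{\mathcal{F}}$) pins down $\widetilde{H}|A|^2=H=\widetilde{H}$; the relation between $(\alpha,\beta)$ and $(\widetilde\alpha,\widetilde\beta)$ and the translation constant $B$ then come out of the exponential coefficients. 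This is a genuine extra step your plan would need to supply, though it fits naturally into the same Taylor-expansion framework you describe.
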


\begin{proof}[Proof of Theorem \ref{biholoC}]

Recall that any rank $2$ vector bundle of degree zero over an elliptic curve must belong to \ref{T1}, \ref{T2}, and \ref{T3}. We consider the lifting biholomorpihsm $F \in \operatorname{Aut}(\mathbb{C}^3)$ defined in (\ref{defFcomp}). Now we analyze the biholomorphic mapping $F$ case by case.

\textbf{Case 1:} $E$ is of \ref{T1} and $\widetilde{E}$ is of \ref{T2} or \ref{T3}.

In this case, there are no biholomorphisms between $X$ and $\widetilde{X}$, as a direct consequence of Proposition $\ref{flattotal}$.

\textbf{Case 2:} Both $E$ and $\widetilde{E}$ are of \ref{T3}. In this case $E$ (resp. $\widetilde E$) is associated with a representation (\ref{gen-rep}) where the parameters of $\rho$ (resp. $\widetilde{\rho}$) are $\theta_1, b_1, \theta_2$, and $b_2$ (resp. $\widetilde{\theta}_1, \widetilde{b}_1, \widetilde{\theta}_2$, and $\widetilde{b}_2$). According to Lemma \ref{isobundle}, we may assume that $b_1=\widetilde{b}_1=0$ after a bundle isomorphism. Note that $b_2$ (resp. $\widetilde{b}_2$) represents the previous $b_2-b_1 \tau$ (resp. $\widetilde{b}_2-\widetilde{b}_1\widetilde{\tau}$), which is nonzero for an indecomposable bundle.

\textbf{Step 1:} If $f: X \to \widetilde{X}$ is a biholomorphic map, then $f$ can be lifted to some $F \in \operatorname{Aut}(\mathbb{C}^3)$. Let
    \begin{align}
             F(z,z_1,z_2)=(F_0(z,z_1,z_2),F_1(z,z_1,z_2),F_2(z,z_1,z_2)).
    \label{defFcomp}
    \end{align}
  Then there exists $p,q,r,s \in \mathbb{Z}$, such that
    \begin{align}
        &F(z+1,\rho(\gamma_1)\begin{pmatrix}
            z_1 \\z_2
        \end{pmatrix})=(F_0(z,z_1,z_2)+p+q\widetilde{\tau}, (\widetilde{\rho}(\widetilde{\gamma_1}))^p (\widetilde{\rho}(\widetilde{\gamma_2}))^q \begin{pmatrix}
            F_1(z,z_1,z_2) \\ F_2(z,z_1,z_2)
        \end{pmatrix}). \label{correspond1}\\
        &F(z+\tau,\rho(\gamma_2)\begin{pmatrix}
            z_1 \\z_2
        \end{pmatrix})=(F_0(z,z_1,z_2)+r+s\tilde{\tau},  (\widetilde{\rho}(\widetilde{\gamma_1}))^r(\widetilde{\rho}(\widetilde{\gamma_2}))^s \begin{pmatrix}
            F_1(z,z_1,z_2) \\ F_2(z,z_1,z_2)
        \end{pmatrix}).\label{correspondtau}
    \end{align}

We expand three components of $F$ into Taylor series with respect to $z_1$ and $z_2$.
\begin{align}
    &F_0(z,z_1,z_2)=\xi_0(z)+\xi_1(z)z_1+\xi_2(z)z_2+\sum_{i+j\ge2} \xi_{ij}(z)z_1^i z_2^j.  \label{defF1}\\
&F_1(z,z_1,z_2)=\lambda_0(z)+\lambda_1(z)z_1+\lambda_2(z)z_2+\sum_{i+j\ge2} \lambda_{ij}(z)z_1^i z_2^j. \label{defF2} \\
&F_2(z,z_1,z_2)=\eta_0(z)+\eta_1(z)z_1+\eta_2(z)z_2+\sum_{i+j\ge2} \eta_{ij}(z)z_1^i z_2^j. \label{defF3}
\end{align}
Then we rewrite Equations $(\ref{correspond1})$ and $(\ref{correspondtau})$ as follows.
\begin{align}
    &F_0(z+1,e^{2\pi i \theta_1}z_1,e^{2\pi i \theta_1}z_2)=F_0(z,z_1,z_2)+p+q\widetilde{\tau}. \label{01} \\
    &F_0(z+\tau,e^{2\pi i \theta_2}(z_1+b_2 z_2),e^{2\pi i \theta_2}z_2)=F_0(z,z_1,z_2)+r+s\widetilde{\tau}. \label{0tau} \\
    &F_1(z+1,e^{2\pi i \theta_1}z_1,e^{2\pi i \theta_1}z_2)=e^{2\pi i (p\widetilde{\theta}_1+q\widetilde{\theta}_2)}(F_1(z,z_1,z_2)+q\widetilde{b}_2F_2(z,z_1,z_2)).\label{11}\\
    &F_1(z+\tau,e^{2\pi i \theta_2}(z_1+b_2 z_2),e^{2\pi i \theta_2}z_2)=e^{2\pi i (r\widetilde{\theta}_1+s\widetilde{\theta}_2)}(F_1(z,z_1,z_2)+s\widetilde{b}_2F_2(z,z_1,z_2)).\label{1tau}\\
    &F_2(z+1,e^{2\pi i \theta_1}z_1,e^{2\pi i \theta_1}z_2)=e^{2\pi i (p\widetilde{\theta}_1+q\widetilde{\theta}_2)}F_2(z,z_1,z_2). \label{21} \\
    &F_2(z+\tau,e^{2\pi i \theta_2}(z_1+b_2 z_2),e^{2\pi i \theta_2}z_2)=e^{2\pi i (r\widetilde{\theta}_1+s\widetilde{\theta}_2)}F_2(z,z_1,z_2).\label{2tau}
\end{align}
Now we may further determine $\xi_0(z)$. Indeed, it follows from $(\ref{01})$ and $(\ref{0tau})$ that
\begin{align*}
    &\xi_0(z+1)=\xi_0(z)+p+q\widetilde{\tau}; \\
    &\xi_0(z+\tau)=\xi_0(z)+r+s\widetilde{\tau}.
\end{align*}
Hence $\xi_0(z)=Az+B$ where $A=p+q\widetilde{\tau}$, $A\tau=r+s\widetilde{\tau}$, and $B$ is constant.
We see $A \neq 0$ as $F(z, 0, 0)$ is injective with respect to $z$. We arrive at
\begin{equation}
    \tau=\dfrac{r+s\widetilde{\tau}}{p+q\widetilde{\tau}},\ \text{where} \begin{pmatrix}
        s&r\\ q&p
    \end{pmatrix} \in \operatorname{GL}(2,\mathbb{Z}).   \label{notyetiso}
\end{equation}

\textbf{Step 2:} We claim that if such a biholomorphism exists, then
\begin{equation}
         \begin{pmatrix}
             p&q\\r&s
          \end{pmatrix}
         \begin{pmatrix}
        \widetilde{\theta}_1\\\widetilde{\theta}_2
        \end{pmatrix}-
         \begin{pmatrix}
            \theta_1 \\ \theta_2
        \end{pmatrix}\in \mathbb{Z}^2. \label{theta}
  \end{equation}
To prove the claim, we consider $\eta_0(z), \eta_1(z), \eta_2(z)$. Let $z_1=z_2=0$. we get $\eta_0(z+1)=e^{2\pi i (p\widetilde{\theta}_1+q\widetilde{\theta}_2)} \eta_0(z)$ and $\eta_0(z+\tau)=e^{2\pi i (r\widetilde{\theta}_1+s\widetilde{\theta}_2)} \eta_0(z)$ from $(\ref{21})$ and $(\ref{2tau})$. Then $\eta_0$ is bounded, hence constant. It also follows from $(\ref{21})$ and $(\ref{2tau})$ that
\begin{align*}
    &e^{2\pi i \theta_1}(\eta_1(z+1)z_1 +\eta_2(z+1)z_2)=e^{2\pi i (p\widetilde{\theta}_1+q\widetilde{\theta}_2)}(\eta_1(z)z_1+\eta_2(z)z_2); \\
    &e^{2\pi i \theta_2}(\eta_1(z+\tau)(z_1+b_2 z_2) +\eta_2(z+\tau)z_2)=e^{2\pi i (r\widetilde{\theta}_1+s\widetilde{\theta}_2)}(\eta_1(z)z_1+\eta_2(z)z_2).
\end{align*}
Comparing the coefficients of $z_1$ term, we see that $\eta_1$ is constant. If (\ref{theta}) does not hold, we have $\eta_1(z)=0$. By a similar argument, $\eta_2(z)=0$. But then the Jacobian of $F$ vanishes along $z_1=z_2=0$. This is a contraction as $F$ is a biholomorphism. Therefore we prove (\ref{theta}).

\textbf{Step 3:} Show that $\Sigma$ is biholomorphic to $\widetilde{\Sigma}$.

If at least of one of $\theta_1$ and $\theta_2$ is nonzero, then $\eta_0=0$ in view of (\ref{theta}). Comparing the constant terms in $(\ref{11})$ and $(\ref{1tau})$, $\lambda_0=0$. Hence $F$ maps the zero section of $X$ to that of $\widetilde{X}$.

Otherwise  if $\theta_1=\theta_2=0$, by (\ref{11}) and (\ref{1tau}) we have
\begin{align*}
    \lambda_0(z+1)=\lambda_0(z)+q\widetilde{b}_2\eta_0, \ \ \lambda_0(z+\tau)=\lambda_0(z)+s\widetilde{b}_2\eta_0.
\end{align*}
It follows that $\lambda_0(z)=Cz+D$ where $C=q\widetilde{b}_2\eta_0$ and $C\tau=s\widetilde{b}_2\eta_0$. We derive that $C$ has to be zero as $\operatorname{Im}\tau>0$. Then $\lambda_0(z)=\lambda_0$ which is a constant. Suppose that $F(z,0,0)=(\xi_0(z),c_1,c_2)$. One can check that $G(z,z_1,z_2)=(F_0(z,z_1,z_2),F_1(z,z_1,z_2)-c_1, F_2(z,z_1,z_2)-c_2)$ is also a well-defined biholomorphism.

To sum up, we can always get a biholomorphism which maps the zero section of $X$ to the zero section of $\widetilde{X}$. Hence $\Sigma$ is biholomorphic to $\widetilde{\Sigma}$.
Moreover, by a standard fact on elliptic curves, the matrix $\begin{pmatrix}
        s&r\\ q&p
    \end{pmatrix}$ in (\ref{notyetiso}) indeed lies in $\operatorname{SL}(2,\mathbb{Z})$. If $\tau,\, \widetilde{\tau} \in \mathcal{\widetilde{F}}$ defined in (\ref{funda}), then $\tau=\widetilde{\tau}$ and there exists $\begin{pmatrix}
        s&r\\ q&p
    \end{pmatrix} \in \operatorname{PSL}(2,\mathbb{Z})$ such that $\tau=\dfrac{s \tau +r}{q\tau +p}$.

\textbf{Step 4:} We may check that
 \begin{align}
 F(z, z_1, z_2)=((p+q\tau)z, c_1 z_1+ q \widetilde{b}_2 z\,z_2, z_2),\ \ \text{where}\ c_1=\frac{\widetilde{b}_2(s-q\tau)}{b_2}, \label{biholomap} \end{align}
 satisfies (\ref{01})-(\ref{2tau}) and thus induces a bundle isomorphism between $E$ and $\widetilde{E}$.
Moreover, if $\tau \ne i, -\frac{1}{2}+\frac{\sqrt{3}}{2}\sqrt{-1}$,
    $\begin{pmatrix}
        s&r\\ q&p
    \end{pmatrix}$ has to be the identity matrix or its negative. Then $(\ref{theta})$ reduces to $\theta_k=\widetilde{\theta}_k$ or $\theta_k=-\widetilde{\theta}_k$.
 Consequently, $(\ref{biholomap})$ has a simple expression
$F(z, z_1, z_2)=(z, \frac{\widetilde{b}_2}{b_2}z_1, z_2)$ or
$(-z, -\frac{\widetilde{b}_2}{b_2}z_1, z_2)$.

\textbf{Case 3:} Both $E$ and $\widetilde{E}$ are of \ref{T1}. We assume $E=L_1 \oplus L_2$ with $L_1=L(0, \alpha)$ and $L_2=L(0, \beta)$, and $\widetilde{E}=\widetilde{L}_1 \oplus \widetilde{L}_2$ with $\widetilde{L}_1=L(0, \widetilde{\alpha})$ and $\widetilde{L}_2=L(0, \widetilde{\beta})$.

We proceed as the proof of \textbf{Case 2}. Namely, if $X$ and $\widetilde{X}$ are biholomorphic, then $\Sigma$ and $\widetilde{\Sigma}$ are isomorphic. There exists some $\begin{pmatrix}  s & r \\ q & p \end{pmatrix} \in \operatorname{SL}(2, \mathbb{Z})$ so that $\tau=\dfrac{r+s\widetilde{\tau}}{p+q\widetilde{\tau}}$. Moreover, $\alpha$ and $\beta$ satisfy
\begin{align*}
\alpha(1)=\widetilde{\alpha}(p+q\widetilde{\tau}), \ \alpha(\tau)=\widetilde{\alpha}(r+s\widetilde{\tau});\ \ \ \beta(1)=\widetilde{\beta}(p+q\widetilde{\tau}), \ \beta(\tau)=\widetilde{\beta}(r+s\widetilde{\tau}).
\end{align*}
For simplicity, we assume two elliptic curves have the same $\tau=\widetilde{\tau}$. One can check that
\begin{equation}
    F(z, z_1, z_2)=((p+q\tau)z, z_1, z_2)
\end{equation}
is a well-defined bundle isomorphism.

\textbf{Case 4:} $E$ is of \ref{T2} and $\widetilde{E}$ is of \ref{T3}. We may assume $L_1=L(H, \alpha)$ and $L_2=L(-H, \beta)$ with $H>0$. We will show there are no biholomorphisms between $X$ and $\widetilde{X}$.

Suppose that there exists a biholomorphic map $F$. As the proof of \textbf{Case 2}, we have
\begin{align}
 &F(z+1,\alpha(1) e^{\pi H z+\pi \frac{H}{2}} z_1,\beta(1)e^{-\pi H z-\pi\frac{H}{2}} z_2)  \\
 =&(F_0(z,z_1,z_2)+p+q\widetilde{\tau}, (\widetilde{\rho}(\widetilde{\gamma_1}))^p (\widetilde{\rho}(\widetilde{\gamma_2}))^q \begin{pmatrix}
     F_1(z,z_1,z_2) \\ F_2(z,z_1,z_2)
\end{pmatrix}).  \nonumber
        \\
&F(z+\tau,\alpha(\tau) e^{\pi H z\bar {\tau}+\pi\frac{H}{2} |\tau|^2} z_1,\beta(\tau)e^{-\pi H z\bar {\tau}-\pi\frac{H}{2}|\tau|^2} z_2) \\
=&(F_0(z,z_1,z_2)+r+s\tilde{\tau},  (\widetilde{\rho}(\widetilde{\gamma_1}))^r(\widetilde{\rho}(\widetilde{\gamma_2}))^s \begin{pmatrix}
   F_1(z,z_1,z_2) \\ F_2(z,z_1,z_2)
\end{pmatrix}).  \nonumber
\end{align}
for some $p,q,r,s \in \mathbb{Z}$. Hence we get
\begin{align}
    &F_2(z+1,\alpha(1) e^{\pi H z+\pi\frac{H}{2}} z_1,\beta(1)e^{-\pi H z-\pi\frac{H}{2}} z_2)=e^{2\pi i (p\widetilde{\theta}_1+q\widetilde{\theta}_2)}F_2(z,z_1,z_2).  \label{group5}\\
    &F_2(z+\tau,\alpha(\tau) e^{\pi H z\bar {\tau}+\pi \frac{H}{2} |\tau|^2} z_1,\beta(\tau)e^{-\pi H z\bar {\tau}-\pi\frac{H}{2}|\tau|^2} z_2)=e^{2\pi i (r\widetilde{\theta}_1+s\widetilde{\theta}_2)}F_2(z,z_1,z_2).  \label{group6}
\end{align}

Recall that the Taylor series of $F$ are given in (\ref{defF1}), (\ref{defF2}), and (\ref{defF3}).
Now we study the coefficient of $z_1$ in (\ref{group5}) and (\ref{group6}).
\begin{align*}
    &\eta_1(z+1) \alpha(1) e^{\pi H z+\pi\frac{H}{2}}=e^{2\pi i (p\widetilde{\theta}_1+q\widetilde{\theta}_2)}\eta_1(z). \\
    &\eta_1(z+\tau)\alpha(\tau) e^{\pi H z\bar {\tau}+\pi\frac{H}{2} |\tau|^2}=e^{2\pi i (r\widetilde{\theta}_1+s\widetilde{\theta}_2)}\eta_1(z).
\end{align*}
It follows that $|\eta_1(z)|^2 e^{\pi H |z|^2}$ is invariant under $z \rightarrow z+1$ and $z \rightarrow z+\tau$. Then $|\eta_1(z)|^2 e^{\pi H |z|^2}$ is bounded and $\lim_{|z|\to \infty}|\eta_1(z)|^2=0$ as $H>0$. Hence $\eta_1$ must be zero. Similarly, we may show that $\xi_1$ and $\lambda_1$ are zero. It is a contradiction, as the Jacobian of $F$ is nondegenerate.

\textbf{Case 5:} Both $E$ and $\widetilde{E}$ are of \ref{T2}. We assume $E=L_1 \oplus L_2$ with $L_1=L(H, \alpha)$ and $L_2=L(-H, \beta)$, and $\widetilde{E}=\widetilde{L}_1 \oplus \widetilde{L}_2$ with $\widetilde{L}_1=L(\widetilde{H}, \widetilde{\alpha})$ and $\widetilde{L}_2=L(-\widetilde{H}, \widetilde{\beta})$. As $X$ and $\widetilde{X}$ are biholomorphic, we consider the lifting biholomorphism $F=(F_0, F_1, F_2) \in \operatorname{Aut}(\mathbb{C}^3)$.

\textbf{Step 1:} Study $F_0$. In this case, there exists $p,q,r,s \in \mathbb{Z}$, such that
    \begin{align}
        &F_0(z+1,\alpha(1) e^{\pi H z+\pi \frac{H}{2}} z_1,\beta(1)e^{-\pi H z-\pi\frac{H}{2}} z_2)=F_0(z,z_1,z_2)+p+q\widetilde{\tau}. \\
        &F_0(z+\tau,\alpha(\tau) e^{\pi H z\bar {\tau}+\pi\frac{H}{2} |\tau|^2} z_1,\beta(\tau)e^{-\pi H z\bar {\tau}-\pi\frac{H}{2}|\tau|^2} z_2)=F_0(z,z_1,z_2)+r+s\tilde{\tau}.
    \end{align}
    Let $z_1=z_2=0$. Then we have $\xi_0(z)=A z+B$ for some constant $B$ as before. Here $A=p+q \widetilde{\tau}$ and $A\tau=r+s\widetilde{\tau}$. Note that $|A|^2=(p+q\operatorname{Re}\tau)^2+q^2(\operatorname{Im}\tau)^2 \ge 1$.

    \textbf{Step 2:} Without loss of generality, we assume that $\widetilde{H} \ge H >0$. We will show that $H=\widetilde{H}$.
  \begin{align}
        &F_2(z+1,\alpha(1) e^{\pi H z+\pi\frac{H}{2}} z_1,\beta(1)e^{-\pi H z-\pi\frac{H}{2}} z_2)\\
        =&\widetilde{\beta}(A) e^{-\pi \widetilde{H}F_0 \overline{A}-
        \frac{\pi}{2}\widetilde{H}|A|^2}F_2(z,z_1,z_2).  \nonumber\\
        &F_2(z+\tau,\alpha(\tau) e^{\pi H z\bar {\tau}+\pi\frac{H}{2} |\tau|^2} z_1,\beta(\tau)e^{-\pi H z\bar {\tau}-\pi\frac{H}{2}|\tau|^2} z_2)\\
        =&\widetilde{\beta}(A \tau) e^{-\pi \widetilde{H}F_0 \overline{A}\overline{\tau}-\frac{\pi}{2}\widetilde{H}|A|^2|\tau|^2}F_2(z,z_1,z_2).
        \nonumber
    \end{align}

Hence $\eta_0(z)$ satisfies:
\begin{align*}
    &\eta_0(z+1)=\widetilde{\beta}(A) e^{-\pi \widetilde{H}(A z +B) \overline{A}-\frac{\pi}{2}\widetilde{H}|A|^2}\eta_0(z). \\
    &\eta_0(z+\tau)=\widetilde{\beta}(A \tau) e^{-\pi \widetilde{H}(A z+B) \overline{A \tau}-\frac{\pi}{2}\widetilde{H}|A|^2|\tau|^2}\eta_0(z).
\end{align*}
It follows that
\begin{equation}
    |\eta_0(z)|^2 e^{\pi \widetilde{H}|A|^2|z|^2+\pi \widetilde{H}(A\overline{B}z +B\overline{A}\bar z)}
\end{equation}
is double periodic under $z \rightarrow z+1$ and $z \rightarrow z+\tau$. Taking $|z| \to \infty$, one has $\eta_0(z)=0$ by the maximum principle. The same argument can be used to prove that
\begin{align}
    |\eta_1(z)|^2 e^{\pi (\widetilde{H}|A|^2+H)|z|^2+\pi \widetilde{H}(A\overline{B}z +B\overline{A}\bar z)},\ \ \ |\eta_2(z)|^2 e^{\pi( \widetilde{H}|A|^2-H)|z|^2+\pi \widetilde{H}(A\overline{B}z +B\overline{A}\bar z)}  \label{eta12double}
\end{align}
are both double periodic. Then $\eta_1(z)=0$. If $\widetilde{H} >H$ or $|A|>1$, we have $\eta_2(z)=0$. This is a contradiction as $F \in \operatorname{Aut}(\mathbb{C}^3)$ and $\frac{\partial F_2}{\partial z}, \frac{\partial F_2}{\partial z_1}$, and $\frac{\partial F_2}{\partial z_2}$ can not vanish simultaneously. Hence $H=\widetilde{H}$ and $|A|=|p+q \widetilde{\tau}|=1$.

\textbf{Step 3:} Show $\tau=\widetilde{\tau}$. Recall there exists $p,q,r,s \in \mathbb{Z}$ such that:  $r+s\widetilde{\tau}=\tau(p+q \widetilde{\tau})$. We discuss $|p+q \widetilde{\tau}|=1$ case by case.

\textbf{Subcase 1:} $q=0$. Then $p =\pm 1$ and $\pm \tau=r+s\widetilde{\tau}$. Since $\tau, \widetilde{\tau} \in \widetilde{\mathcal{F}}$,  we have $\operatorname{Im}\tau= \operatorname{Im}\widetilde{\tau}$ or $\operatorname{Im}\tau\ge 2 \operatorname{Im}\widetilde{\tau} >1$. If $\operatorname{Im}\tau\ge 2 \operatorname{Im}\widetilde{\tau}$, we consider the inverse mapping $F^{-1}$ and run Step 1. Then there exist $r' ,s' \in \mathbb{Z}$ such that $\widetilde{\tau}=r'+s'\tau$ and  $\operatorname{Im}\widetilde{\tau} \geq \operatorname{Im}\tau$,  which is a contradiction. Hence $\operatorname{Im}\tau= \operatorname{Im}\widetilde{\tau}$ and $\tau=\widetilde{\tau}$. Note that $A=\pm1$ in this case.

\textbf{Subcase 2:} $p=0$, $q=\pm 1$ and $|\widetilde{\tau}|=1$. Then $\pm \tau \widetilde{\tau}=r+s\widetilde{\tau}$. Let $\widetilde{\tau}=e^{i\theta}$ with $\theta \in [\frac{\pi}{2}, \frac{2\pi}{3}]$. Then $\pm\tau=s+re^{-i\theta}$. Since $\tau \in \widetilde{\mathcal{F}}$, a simple analysis shows $s=0$ and $r=\pm 1$, and $\tau=\widetilde{\tau}=i$.

\textbf{Subcase 3:} $p=q=1$ (or $p=q=-1$) and $\widetilde{\tau}=e^{\frac{2\pi}{3}i}$. Obviously $\tau=se^{\frac{\pi}{3}i}+re^{\frac{-\pi}{3}i}$. Similarly, we have $\tau=e^{\frac{2\pi}{3}i}$.

\textbf{Step 4:} Determine the relation between $\beta$ and $\widetilde{\beta}$.
It follows from (\ref{eta12double}) that
\begin{equation}
    |\eta_2(z)|^2 e^{\pi \widetilde{H}(A\overline{B}z +B\overline{A}\bar z)}
\end{equation}
is double periodic. By Liouville's theorem, $\eta_2(z)=Ce^{-\pi \widetilde{H}A\overline{B}z}$.
\begin{align*}
    &Ce^{-\pi \widetilde{H}A\overline{B}(z+1)} \beta(1)e^{-\pi H z-\frac{H}{2}}=Ce^{-\pi \widetilde{H}A\overline{B}z}\widetilde{\beta}(A) e^{-\pi \widetilde{H}(A z+B) \overline{A}-\frac{\pi}{2}\widetilde{H}|A|^2}. \\
    &Ce^{-\pi \widetilde{H}A\overline{B}(z+\tau)}\beta(\tau)e^{-\pi H z\bar {\tau}-\frac{H}{2}|\tau|^2}=Ce^{-\pi \widetilde{H}A\overline{B}z}\widetilde{\beta}(A \tau) e^{-\pi \widetilde{H}(A z+B) \overline{A}\overline{\tau}-\frac{\pi}{2}\widetilde{H}|A|^2|\tau|^2}.
\end{align*}
Therefore,
\begin{align}
    \beta(1) \widetilde{\beta}(A)^{-1}=e^{\pi H (A \overline{B}- B \overline{A})},\ \
    \beta(\tau) \widetilde{\beta}(A \tau)^{-1}=e^{\pi H (A \overline{B} \tau-B \overline{A \tau})}.  \label{betacondi}
\end{align}

\textbf{Step 5:} We determine the relation between $\alpha$ and $\widetilde{\alpha}$.

Note that $\xi_1(z)=0$ and $\eta_1(z)=0$ as in Step 1 and 2. Then $\lambda_1(z)$ is nowhere vanishing. By the same argument in Step 4, one has
\begin{equation}
    \lambda_1(z)=Ce^{\pi \widetilde{H}A\overline{B}z}.
\end{equation}
Hence
\begin{align}
    \alpha(1) \widetilde{\alpha}(A)^{-1}=e^{-\pi H (A \overline{B}- B \overline{A})},\ \
    \alpha(\tau) \widetilde{\alpha}(A \tau)^{-1}=e^{-\pi H (A \overline{B} \tau-B \overline{A \tau})}.  \label{alphacondi}
\end{align}
Combining (\ref{betacondi}) and (\ref{alphacondi}), we get
\begin{align}
    \alpha(1)\beta(1)=\widetilde{\alpha}(A)\widetilde{\beta}(A),\ \
    \alpha(\tau)\beta(\tau)=\widetilde{\alpha}(A \tau)\widetilde{\beta}(A \tau).   \label{twocondi}
\end{align}

\textbf{Step 6:} Finally, it is straightforward to check
\begin{equation}
    \Phi(z,z_1,z_2)=(A z+B,e^{\pi HA\overline{B}z} z_1, e^{-\pi HA\overline{B}z} z_2)  \label{Phidef}
\end{equation}
is a desired bundle isomorphism between $E$ and $\widetilde{E}$.

In fact, the above proof shows a stronger statement. Namely, given two vector bundles $E=L_1 \oplus L_2$ and $\widetilde{E}=\widetilde{L}_1 \oplus \widetilde{L}_2$ over an elliptic curve defined by the lattice $\mathbb{Z}\text{-span}\{1, \tau\}$ where $\tau \in \widetilde{\mathcal{F}}$ defined in (\ref{funda}). Here we assume $H>0$ and $L_1=L(H, \alpha)$, $L_2=L(-H, \beta)$, $\widetilde{L}_1=L(\widetilde{H}, \widetilde{\alpha})$, and $\widetilde{L}_2=L(-\widetilde{H}, \widetilde{\beta})$. Then we introduce $A$ as
\begin{align}
    A=\begin{cases}
        \pm 1 \ \ \ \ \ \text{if}\ \tau \neq i, e^{\frac{2\pi i}{3}} \\
        \pm 1\ \text{or}\ \pm i\ \ \ \ \ \text{if}\ \tau=i \\
        \pm 1\ \text{or}\ \pm e^{\frac{\pi i}{3}}\ \text{or}\
        \pm e^{\frac{2\pi i}{3}}
        \ \ \ \ \text{if}\ \tau=e^{\frac{2\pi i}{3}}.
    \end{cases}
\end{align}
Assume (\ref{twocondi}) holds and
\[
\beta(1) \widetilde{\beta}(A)^{-1}=e^{2\pi i \theta}\ \text{and}\ \ \beta(\tau) \widetilde{\beta}(A \tau)^{-1}=e^{2\pi i \phi}
\]
hold for some $\theta, \phi \in [0, 1)$. Then
$E$ and $\widetilde{E}$ are bundle isomorphic by (\ref{Phidef}). Here $B$ in (\ref{Phidef}) is determined in the following way. Assume $A \overline{B}=u+vi$ and $\tau=a+bi$. Then $\operatorname{Im}A \overline{B}=v, \operatorname{Im}A \overline{B}\tau=va+ub$. For any $\theta, \phi$, $Hv=\theta,\ H(va+ub)=\phi$ has a unique solution $v=\frac{\theta}{H}, u=\frac{\phi-\theta a}{Hb}$. Then we solve $B=\overline{\frac{1}{A}(u+vi)}$.

\end{proof}

\section{Hermitian geometry on total spaces of rank \texorpdfstring{$2$}{TEXT} bundles of degree zero}\label{sec5}

Let $E$ be a rank $2$ bundle of degree zero over an elliptic curve and $X$ its total space. In this section we construct complete Hermitian metrics on $X$ and prove Theorem \ref{Hermipoly}. If $E$ is of \ref{T1}, the standard Euclidean metric descends to $X$ as the  K\"ahler metric $g$ in Proposition \ref{flattotal}. It remains to construct the desired Hermitian metric $g$ if $E$ is of \ref{T2} and \ref{T3} respectively.

\subsection{The Type II case} In this subsection, we prove Theorem \ref{Hermipoly} when $E$ is of \ref{T2}, i.e. $E=L_1 \oplus L_2$ where $\operatorname{deg}L_1=-\operatorname{deg}L_2>0$. First of all, we consider $X$ be the total space of a line bundle of the form $L(H,\alpha)$ and introduce
\begin{equation}
    \omega_{L}=\sqrt{-1} (dz \wedge d\bar z+e^{\pi H |z|^2} \partial (e^{-\pi H |z|^2} \xi) \wedge \bar \partial (e^{-\pi H |z|^2}\overline{\xi})).   \label{HmetricT2}
\end{equation}

\begin{lemma}\label{complete2}
$\omega_{L}$ defined in (\ref{HmetricT2}) is a complete Hermitian metric on $X$.
\end{lemma}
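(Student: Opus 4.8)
The plan is to recast $\omega_L$ in a manifestly invariant form, check positivity and $\Gamma$-invariance, and then establish completeness through a proper exhaustion function of bounded gradient.

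First I would introduce the $(1,0)$-form $\eta = d\xi - \pi H\bar{z}\xi\,dz$, which is the $(1,0)$-part of the Chern connection of $(L,h)$ with $h = e^{-\pi H|z|^2}|\xi|^2$. A direct computation gives $\partial(e^{-\pi H|z|^2}\xi) = e^{-\pi H|z|^2}\eta$, so that (\ref{HmetricT2}) becomes
\[
\omega_L = \sqrt{-1}\bigl(dz\wedge d\bar z + e^{-\pi H|z|^2}\,\eta\wedge\bar\eta\bigr).
\]
Since the coefficient of $d\xi$ in $\eta$ is $1$, the pair $\{dz,\eta\}$ is a global $(1,0)$-coframe on $\mathbb{C}^2$, and in this coframe the Hermitian matrix of $\omega_L$ is $\operatorname{diag}(1, e^{-\pi H|z|^2})$, which is positive definite everywhere; hence $\omega_L$ is a positive $(1,1)$-form.

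Next I would verify invariance under the Appel--Humbert action $(z,\xi)\mapsto(z+\gamma,\, c(z,\gamma)\xi)$ with $c(z,\gamma)=\alpha(\gamma)e^{\pi Hz\bar\gamma+\frac{\pi}{2}H|\gamma|^2}$. The first summand is invariant since $dz$ is. For the second, the key (short) computation is that the cross terms cancel and $\eta$ transforms as a section, $\eta\mapsto c\,\eta$. Because $|\alpha(\gamma)|=1$, one finds $|c(z,\gamma)|^2 = e^{\pi H(z\bar\gamma+\bar z\gamma+|\gamma|^2)}$, which exactly cancels the change of $e^{-\pi H|z|^2}$ under $z\mapsto z+\gamma$; thus $e^{-\pi H|z|^2}\eta\wedge\bar\eta$ is invariant and $\omega_L$ descends to a Hermitian metric on $X$.

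The substantive step is completeness, which I would prove with the standard criterion that a Hermitian manifold carrying a proper function of bounded gradient is complete. The natural candidate is the invariant fiber norm $h = e^{-\pi H|z|^2}|\xi|^2$, which is a well-defined function on $X$; its sublevel sets are closed disk bundles over the compact base $\Sigma$, hence compact, so $h$ (and $\phi := \sqrt{1+h}$) is proper. Using the unitary frame $e_1 = e^{\pi H|z|^2/2}\partial_\xi$ and $e_0 = \partial_z + \pi H\bar z\xi\,\partial_\xi$ dual to $\{dz,\eta\}$, I would compute $e_0(h)=0$ and $|e_1(h)| = \sqrt{h}$, so that $|\nabla\phi|^2_{\omega_L}$ is bounded by an absolute constant. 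Properness of $\phi$ together with the gradient bound then forces every finite-length divergent path to keep $\phi$ bounded, contradicting properness, and completeness follows. The main obstacle I anticipate is conceptual rather than computational: when $H\neq 0$ the fiber coefficient $e^{-\pi H|z|^2}$ degenerates as $|z|\to\infty$ on the universal cover $\mathbb{C}^2$, so one cannot read off completeness from naive fiber lengths there. The resolution is to work with the $\Gamma$-invariant exhaustion $h$, which both packages the compactness of the base and makes the gradient estimate uniform in the sign of $H$; getting the frame $\{e_0,e_1\}$ and the cancellation $e_0(h)=0$ right is what makes the bounded-gradient estimate clean.
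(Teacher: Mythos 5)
Your proof is correct, and it takes a genuinely different route from the paper. The paper works on the universal cover $\mathbb{C}^2$ with the explicit matrix of $\omega_L$ in the coordinates $(z,\xi)$, fixes a base point, and bounds from below the length of an arbitrary $C^1$ curve reaching a far-away point $(z_*,\xi_*)$, splitting into cases according to whether $|z_*|\to\infty$ or $|\xi_*|\to\infty$ and, in the latter case, according to the size of $\sup|z(t)|$ along the curve; this yields quantitative lower bounds of the form $d\gtrsim \sqrt{\ln|\xi_*|}$ and concludes completeness via Hopf--Rinow. You instead diagonalize $\omega_L$ in the coframe $\{dz,\eta\}$ with $\eta=d\xi-\pi H\bar z\xi\,dz$ (your identity $\partial(e^{-\pi H|z|^2}\xi)=e^{-\pi H|z|^2}\eta$ and the transformation law $\eta\mapsto c\,\eta$ are both correct, and they also give a cleaner verification that $\omega_L$ descends to $X$ than the paper's matrix computation), and then prove completeness by exhibiting the proper function $\phi=\sqrt{1+h}$, $h=e^{-\pi H|z|^2}|\xi|^2$, with $e_0(h)=0$ and $|e_1(h)|=\sqrt h$ in the unitary frame, hence $|\nabla\phi|$ bounded; the standard criterion (a proper function with bounded gradient forces completeness) finishes the argument. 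Your approach is shorter, avoids the casework, works directly on $X$ rather than on the cover, and is uniform in the sign of $H$; what it gives up is the explicit rate at which the distance to the zero section grows along the fibers (roughly $\sqrt{\ln|\xi|}$), which the paper's curve-by-curve estimate produces as a by-product, though that rate is not actually reused elsewhere in the paper (the growth estimates needed for Theorem \ref{Hermipoly} are derived separately for the metric (\ref{HmetricT2g}) via the totally geodesic fiber argument). All the computations you flag as needing care (the cancellation in $e_0(h)$, the compactness of the sublevel sets of $h$ over the compact base) check out.
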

\begin{proof}[Proof of Lemma \ref{complete2}]
It is direct to check that $\omega$ is a well-defined Hermitian metric on $X$ with its local components
\begin{equation}
       \begin{pmatrix}
        1+e^{-\pi H |z|^2} (\pi H)^2 |z|^2 |\xi|^2  & e^{-\pi H |z|^2} (-\pi H \bar{z})\xi \\
        e^{-\pi H |z|^2} (-\pi H \bar{z})\bar{\xi} & e^{-\pi H |z|^2}
    \end{pmatrix}.  \label{HmetricT2f}
\end{equation}
Indeed, we may rewrite (\ref{HmetricT2f}) as
\begin{equation*}
    g=\begin{pmatrix}
        1&0 \\ 0&0
    \end{pmatrix} +c
    \begin{pmatrix}
        |a|^2 & \bar a \\
        a &1
    \end{pmatrix},\ \ \text{where}\ c=e^{-\pi H|z|^2} \text{and}\ a=-\pi H z \overline{\xi}.
\end{equation*}
We consider $\omega$ as the lift metric on the universal cover of $X$. For a fixed point $p=(0, 0)$,
let $\gamma(t)$ with $t \in [t_0, t_1]$ denote any $C^1$ curve connecting $p$ and any point $q=(z_{\ast}, \xi_{\ast})$. Let $\gamma(t)=(z(t), \xi(t))$. Then
$g(\gamma'(t), \gamma'(t))=|z'(t)|^2 +c|\overline a z'(t)+\xi'(t)|^2$. In below, we estimate the length of $\gamma$ as $q$ tends to infinity.

\textbf{Case 1:} If $|z_{\ast}| \to \infty$, we provide the length of $\gamma$.
\begin{equation*}
    \int_{t_0}^{t_1} \sqrt{g(\gamma'(t), \gamma'(t))} dt\ge \int_{t_0}^{t_1} |z'(t)|dt=|z_{\ast}|.
\end{equation*}

\textbf{Case 2:} If $|z_{\ast}|$ is bounded and $|\xi_{\ast}| \to \infty$. Let $M=\sup_{t \in [t_1, t_2]}|z(t)|$.

Subcase 2a:  We assume $\max(e^{\pi H M^2}, (\pi H M)^2) \geq \ln|\xi_{\ast}|$.
Then we may apply Case 1 on a part of $\gamma(t)$ to conclude that
\begin{equation*}
    \int_{t_0}^{t_1} \sqrt{g(\gamma'(t), \gamma'(t))} dt\ge M \geq \begin{cases}
    \frac{1}{\pi H}\sqrt{\ln (\ln |\xi_{\ast}|)},  \ \text{if}\ H>0,\\
    \frac{1}{\pi |H|} \sqrt{\ln |\xi_{\ast}|},  \ \text{if}\ H \leq 0.
    \end{cases}
\end{equation*}

Subcase 2b:  We assume $\max(e^{\pi H M^2}, (\pi H M)^2) \leq \ln|\xi_{\ast}|$.
\begin{align*}
    g(\gamma'(t), \gamma'(t)) &=(1+c|a|^2) |z'(t)|^2+2\operatorname{Re}(c\bar a z'(t) \overline{\xi'(t)})+c|\xi'(t)|^2\\
    &=(1+c|a|^2) \Big|z'(t)+\frac{c a\xi'(t)}{1+c|a|^2}\Big|^2+\frac{c |\xi'(t)|^2}{1+c|a|^2}\\
    &\ge \frac{|\xi'(t)|^2}{e^{\pi H|z(t)|^2}+ (\pi H) ^2|z(t)|^2|\xi(t)|^2}.
\end{align*}
As we assume that $|z|$ is bounded along $\gamma(t)$, there exists some constant $C>0$ such that
\begin{equation*}
    \int_{t_0}^{t_1} \sqrt{g(\gamma'(t), \gamma'(t))}\,dt \ge \frac{1}{\sqrt{\ln |\xi_{\ast}|}} \int_{{|\xi(t)| \geq 1}}^{t_1} \frac{|\xi'(t)|}{2|\xi|} \,dt \ge  \frac{1}{2}\sqrt{\ln |\xi_{\ast}|}.
\end{equation*}
Therefore, we have shown that the lifting metric $\omega$ on the universal cover of $X$ (hence $\omega$ on $X$) is complete.
\end{proof}

Motivated by (\ref{HmetricT2}), we introduce a Hermitian metric on the total space $X$ of $E=L_1(H,\alpha) \oplus L_2(-H,\beta)$ as follows.
\begin{align}
    \omega=&\sqrt{-1}\Big(dz \wedge d\bar z+e^{\pi H |z|^2} \partial (e^{-\pi H |z|^2}\xi_1) \wedge \bar \partial (e^{-\pi H |z|^2}\overline{\xi_1})   \label{HmetricT2g} \\
    &+e^{-\pi H |z|^2} \partial (e^{\pi H |z|^2}\xi_2) \wedge \bar \partial (e^{\pi H |z|^2}\overline{\xi_2})\Big).    \nonumber
\end{align}

\begin{lemma}\label{complete2g}
$\omega$ defined in (\ref{HmetricT2g}) is a complete Gauduchon metric on $X$ with zero Chern-Ricci curvature.
\end{lemma}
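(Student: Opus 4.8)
The plan is to rewrite $\omega$ in a form adapted to the direct-sum structure, then prove well-definedness, completeness, and the two curvature statements in turn. Using $e^{\pi H|z|^2}\partial(e^{-\pi H|z|^2}\xi_1)=\phi_1$ and $e^{-\pi H|z|^2}\partial(e^{\pi H|z|^2}\xi_2)=\phi_2$, where $\phi_1=d\xi_1-\pi H\bar z\,\xi_1\,dz$ and $\phi_2=d\xi_2+\pi H\bar z\,\xi_2\,dz$, the metric (\ref{HmetricT2g}) becomes
\[
\omega=\sqrt{-1}\big(dz\wedge d\bar z+e^{-\pi H|z|^2}\phi_1\wedge\bar\phi_1+e^{\pi H|z|^2}\phi_2\wedge\bar\phi_2\big).
\]
Well-definedness would then follow exactly as in Lemma \ref{complete2}: a direct check gives $T_\gamma^{\ast}\phi_1=\alpha(\gamma)e^{\pi H z\bar\gamma+\frac{\pi}{2}H|\gamma|^2}\phi_1$, while $e^{-\pi H|z|^2}$ transforms by the reciprocal of the modulus squared of this factor, so that $e^{-\pi H|z|^2}\phi_1\wedge\bar\phi_1$ is invariant; this is simply the invariance of the Appel--Humbert fibre metric $e^{-\pi H|z|^2}|\xi_1|^2$ on $L_1$. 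The $L_2=L(-H,\beta)$ term is invariant for the same reason.

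For completeness I would avoid a direct curve estimate and reduce to Lemma \ref{complete2}. Writing $p_1,p_2$ for the projections of $X$ onto the total spaces $X_1,X_2$ of $L_1,L_2$, one has $\omega\ge p_1^{\ast}\omega_{L_1}$ and $\omega\ge p_2^{\ast}\omega_{L_2}$ as real $(1,1)$-forms, since each is obtained from $\omega$ by discarding one nonnegative summand; hence $d_\omega(x,y)\ge d_{\omega_{L_i}}(p_i(x),p_i(y))$. Any sequence leaving every compact subset of $X$ has, along a subsequence, one of the fibre norms $h_1=e^{-\pi H|z|^2}|\xi_1|^2$ or $h_2=e^{\pi H|z|^2}|\xi_2|^2$ tending to infinity (the base $\Sigma$ being compact); its image under the corresponding $p_i$ then leaves every compact subset of $X_i$, so $d_{\omega_{L_i}}\to\infty$ by Lemma \ref{complete2}, whence $d_\omega\to\infty$. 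Thus bounded sets are relatively compact and $\omega$ is complete.

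The Chern-Ricci statement is immediate once one notices that $\{dz,\phi_1,\phi_2\}$ arises from $\{dz,d\xi_1,d\xi_2\}$ by a unipotent change of coframe, so that $\det(g_{i\bar j})=e^{-\pi H|z|^2}\cdot e^{\pi H|z|^2}=1$ and the first Chern-Ricci form $-\sqrt{-1}\partial\bar\partial\log\det(g_{i\bar j})$ vanishes. For the Gauduchon property I must show $\partial\bar\partial\omega^2=0$, as $X$ has complex dimension $3$. Setting $\Phi_0=\sqrt{-1}\,dz\wedge d\bar z$, $\Phi_1=\sqrt{-1}e^{-\pi H|z|^2}\phi_1\wedge\bar\phi_1$, $\Phi_2=\sqrt{-1}e^{\pi H|z|^2}\phi_2\wedge\bar\phi_2$, one has $\omega^2=2(\Phi_0\wedge\Phi_1+\Phi_0\wedge\Phi_2+\Phi_1\wedge\Phi_2)$, and the plan is to kill the three summands separately. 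For the first two, wedging against $dz\wedge d\bar z$ annihilates the $dz$-parts of $\phi_i$, giving $\Phi_0\wedge\Phi_1=-e^{-\pi H|z|^2}\,dz\wedge d\bar z\wedge d\xi_1\wedge d\bar\xi_1$; since its coefficient depends only on $z$, its $\partial\bar\partial$ is a multiple of $dz\wedge d\bar z$ and is killed by the $dz\wedge d\bar z$ already present. The decisive summand is $\Phi_1\wedge\Phi_2$: using $e^{-\pi H|z|^2}e^{\pi H|z|^2}=1$ it equals $\Omega\wedge\bar\Omega$ with $\Omega=\phi_1\wedge\phi_2$, and a short computation gives $\partial\Omega=0$ and $\partial\bar\partial\Omega=0$, while $\bar\partial\Omega\wedge\partial\bar\Omega=0$ because every term of both $\bar\partial\Omega$ and $\partial\bar\Omega$ already contains both $dz$ and $d\bar z$; hence $\partial\bar\partial(\Omega\wedge\bar\Omega)=0$ and therefore $\partial\bar\partial\omega^2=0$.

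I expect the Gauduchon identity to be the main point requiring care: its vanishing is not formal but rests on the two structural facts that $\det(g_{i\bar j})\equiv 1$ (equivalently, the two fibre weights are reciprocal) and that $\Omega=\phi_1\wedge\phi_2$ is $\partial$-closed, together with the observation that every coefficient function depends on $z$ alone. Once these are isolated, the remaining steps are routine bookkeeping of wedge products, and the completeness and Chern-Ricci assertions fall out with little extra work.
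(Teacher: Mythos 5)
Your proof is correct, and its substance coincides with the paper's: both verify invariance of the two fibre terms under the Appel--Humbert action, both get $\det(g_{i\bar j})=1$ (the paper via Lemma \ref{detlemma}, you via the unipotent coframe $\{dz,\phi_1,\phi_2\}$ --- the same fact), and your Gauduchon computation is exactly the one the paper suppresses behind ``it is straightforward to check'': the cross terms $\Phi_0\wedge\Phi_i$ die because their coefficients depend on $z$ alone, and the decisive term $\Phi_1\wedge\Phi_2=\pm\,\Omega\wedge\bar\Omega$ loses its weight precisely because the two fibre weights are reciprocal, after which $\partial\Omega=0$ and the $dz\wedge d\bar z$ saturation finish it. The one place you genuinely diverge is completeness: the paper adapts the curve-length estimate of Lemma \ref{complete2} to the $3\times 3$ matrix (\ref{HmetricT2gf}) by direct comparison, whereas you observe that $\omega$ dominates the pullbacks $p_i^{\ast}\omega_{L_i}$ under the two bundle projections and invoke Lemma \ref{complete2} twice, together with properness of $\max(h_1,h_2)$ over the compact base. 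Your reduction is cleaner and avoids re-running the case analysis of Lemma \ref{complete2}; the paper's version keeps everything explicit in one coordinate computation. Both are valid, and your route generalizes more readily to direct sums of more than two line bundles.
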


\begin{proof}[Proof of Lemma \ref{complete2g}]
First of all, we may check that $\omega$ is invariant under the action
\[
(z, \xi_1, \xi_2) \rightarrow  (z+\gamma, \alpha(\gamma) e^{\pi H z \overline{\gamma}+\frac{\pi}{2}H |\gamma|^2} \xi_1, \beta(\gamma) e^{-\pi H z \overline{\gamma}-\frac{\pi}{2}H |\gamma|^2} \xi_2).
\]
Hence $\omega$ descends onto $X$ and its local components read
\begin{equation}
     \Scale[0.92]{
       \begin{pmatrix}
        1+e^{-\pi H |z|^2} (\pi H)^2 |z|^2 |\xi_1|^2+e^{\pi H |z|^2} (\pi H)^2 |z|^2 |\xi_2|^2  & e^{-\pi H |z|^2} (-\pi H \overline{z})\xi_1  &
        e^{\pi H |z|^2} (\pi H \overline{z})\xi_2\\
        e^{-\pi H |z|^2} (-\pi H z)\overline{\xi_1} & e^{-\pi H |z|^2}  &   0\\
        e^{\pi H |z|^2} (\pi H z)\overline{\xi_2}   &  0  &  e^{\pi H |z|^2}
    \end{pmatrix}.  \label{HmetricT2gf}
    }
\end{equation}
After a comparison between (\ref{HmetricT2f}) and (\ref{HmetricT2gf}), we see that the completeness of $\omega$ can be proved similarly as in Lemma \ref{complete2}. It follows from Lemma \ref{detlemma} or a direct computation that $\det(g)=1$. Then the first Chern-Ricci curvature is equal to zero. It is straightforward to check that $\omega$ is Gauduchon. Namely, $\sqrt{-1}\partial \bar \partial (\omega^2)=0$ in the sense of \cite{Gauduchon1977}.
\end{proof}

\begin{proof}[Proof of Theorem \ref{Hermipoly} when $E$ is of \ref{T2}]

In view of Lemma \ref{complete2g}, it suffices to determine $\mathcal{O}_d(X, g)$ which is defined in (\ref{polyspaced}). Consider the complete Hermitian metric $(X, g)$ defined in (\ref{HmetricT2g}). Let $\Sigma$ denote the zero section of the vector bundle $E=L_1(H,\alpha) \oplus L_2(-H,\beta)$, and $\widetilde{d}(q)$ the distance function (with respect to $g$) of any point $q \in X$ to $\Sigma$. Equivalently, we need to characterize all $f \in \mathcal{O}(X)$ so that there exists some $C(d, f)>0$
\begin{align}
    |f(q)| \leq C(\widetilde{d}(q)+1)^{d}\ \text{for all}\ q \in X.   \label{polyspaced2}
\end{align}

With a slight abuse of notations, let $\pi: E \rightarrow \Sigma$ be the projection and $\xi_1$ and $\xi_2$ denote local coordinates of $L_1$ and $L_2$ along the fiber directions respectively. It follows from (\ref{taylor1}) that any $f \in \mathcal{O}(X)$ can be expressed as
$f=\sum_{n=p+q \geq 0}^{\infty} a_{p, q} \xi_1^p \xi_2^q$, where $a_{p, q}$ is the local coefficient of any element in $H^{0}(\Sigma, L_1^{-p} \otimes L_2^{-q})$ for any integers $p, q \geq 0$. In order to characterize (\ref{polyspaced2}), it suffices to show both $\xi_1$ and $\xi_2$ grow linearly with respect to $\widetilde{d}$ on $\pi^{-1}(U)$ for a small open set $U \subset \Sigma$. We argue by three steps.

\textbf{Step 1:} We observe that a special fiber $\pi^{-1}(0)$ of the point $z=0$ on $\Sigma$ is totally geodesic with respect to the Riemannian connection of $g$. Indeed, by (\ref{HmetricT2gf}), $\frac{\partial}{\partial z}$ is the normal direction along $\pi^{-1}(0)$. The Riemannian Christoffel symbols (in terms of real coordinates) satisfies $\Gamma_{ab}^{c}=0$ where $a, b$ is from the real and imaginary coordinates of $\xi_1, \xi_2$, and $c$ is from those of $z$.

\textbf{Step 2:} We show any fiber $\pi^{-1}(B)$ for some point $z=B$ on $\Sigma$ is totally geodesic. To that end, we consider the bundle automorphism $\Phi$ defined in (\ref{Phidef}) in the case of $A=1$. Namely
\begin{equation}
    \Phi(z, \xi_1, \xi_2)=(z+B, e^{\pi H \overline{B} z} \xi_1, e^{-\pi H \overline{B} z} \xi_2).
    \label{PhidefN}
\end{equation}
It is easy to check the pull back metric $\Phi^{\ast}(\omega)$ has the corresponding local components
\begin{equation}
  \begin{pmatrix}
        1+e^{-\pi H |z|^2+\pi H |B|^2} (\pi H)^2 |z|^2 |\xi_1|^2+e^{\pi H |z|^2-\pi H |B|^2} (\pi H)^2 |z|^2 |\xi_2|^2  &  A\\
       B &     C
    \end{pmatrix}.
    \end{equation}
Here we set
  \begin{align*}
      A&=\begin{pmatrix}
          e^{-\pi H |z|^2+\pi H |B|^2} (-\pi H \overline{z})\xi_1  &
        e^{\pi H |z|^2-\pi H |B|^2} (\pi H \overline{z})\xi_2
      \end{pmatrix},\\
      B&=\begin{pmatrix}
          e^{-\pi H |z|^2+\pi H |B|^2} (-\pi H z)\overline{\xi_1} \\  e^{\pi H |z|^2-\pi H |B|^2} (\pi H z)\overline{\xi_2}
      \end{pmatrix},\ \
      C=\begin{pmatrix}
          e^{-\pi H |z|^2+\pi H |B|^2}   &  0 \\
          0  &  e^{\pi H |z|^2-\pi H |B|^2}
      \end{pmatrix}.
  \end{align*}
  By a similar argument as in Step 1, we get that the fiber of $z=0$ is totally geodesic with respect to $\Phi^{\ast}\omega$. It implies that the fiber of $z=B$ is totally geodesic with respect to $\omega$. Equivalently, we find a coordinate change near the fiber
  $\pi^{-1}(B)$ so that $\omega$ is reduced to a specific diagonal form as that near the fiber $\pi^{-1}(0)$.

\textbf{Step 3:} We estimate the growth of $\xi_i\,(i=1, 2)$ with respect to $\widetilde{d}$.
Now that we have each fiber $\pi^{-1}(z)$ is totally geodesic. Moreover, any geodesic of $X$ which begins with $z \in \Sigma$ and meet $\Sigma$ orthgonally must stay inside the fiber $\pi^{-1}(z)$. In other words, for any $q \in X$, $\widetilde{d}$ is realized by a geodesic inside the fiber containing $q$. Therefore, we are at the similar situation as in the Calabi method (Theorem \ref{vecmetric}). Since the induced metric on each fiber is Euclidean, we conclude that both $\xi_1$ and $\xi_2$ grow linearly with respect to $\widetilde{d}$.

\end{proof}

\subsection{The Type III case} Assume that $E$ is an indecomposable rank $2$ bundle of degree zero which is associated with a representation defined in (\ref{gen-rep}). Now we introduce a Hermitian metric on the total space of $E$. We begin with the following $(1, 1)$ form on $\mathbb{C}^3$. Recall that $z=x+y\sqrt{-1}$ and $\Sigma$ is the elliptic curve defined in (\ref{ecurve}).
\begin{align}
    \omega=&\sqrt{-1}\Big[dz \wedge d\bar z +\partial\Big(z_1-(b_1 x+\frac{b_2-b_1 \operatorname{Re}\tau}{\operatorname{Im}\tau}y)z_2\Big)\wedge \overline \partial\Big(\overline {z_1}-(\overline {b}_1 x+\frac{\overline {b}_2-\overline {b}_1 \operatorname{Re}\tau}{\operatorname{Im}\tau}y)\overline {z_2}\Big)   \label{Gaugen}\\
    &+dz_2 \wedge d\overline {z_2}\Big].    \nonumber
\end{align}

Now we review some special cases of (\ref{Gaugen}). If the representation (\ref{gen-rep}) associated to $E$ satisfies $b_2=b_1 \tau$, then (\ref{Gaugen}) is reduced to
\[
\omega=\sqrt{-1}\Big(dz \wedge d\overline{z}+\partial(z_1-b_1zz_2) \wedge \bar{\partial}(\overline{z_1}-\overline{b_1zz_2})+dz_2 \wedge d\overline{z_2}\Big).
\]
By Lemma \ref{isobundle}, we may choose $F(z_1, z_2, z_3)=(z, z_1+b_1 z z_2, z_2) \in \operatorname{Aut}(\mathbb{C}^3)$ which descends to an bundle isomorphism between $L \oplus L$ and $E$ for some $L \in \operatorname{Pic}^{0} (\Sigma)$. Then $F^{\ast}\omega$ becomes the descended Euclidean metric on $X$.

If we choose $E$ as (\ref{Edef}) and the corresponding $\tau=\sqrt{-1}$, then
(\ref{Gaugen}) is reduced to
\[
\omega= \sqrt{-1}(dz \wedge d\bar z +\partial(z_1-yz_2)\wedge \bar{\partial}(\overline{z_1}-y\overline{z_2})+dz_2 \wedge d\overline{z_2}).
\] We note that both $z_1-yz_2$ and $z_2$ terms appear in Paun's Hermitian fiber metric defined in (\ref{Hdef}). In this sense, (\ref{Gaugen}) is a natural `Hermitian' generalization of Calabi's K\"ahler metric defined in (\ref{Cmetric}). By a direct calculation, we may show the following result.
\begin{lemma}\label{metriclist}
    $\omega$ defined by (\ref{Gaugen}) descends to a Hermitian metric on $X$ such that
    \begin{enumerate}[label=(\arabic*)]
        \item $\omega$ is  Gauduchon.
        \item The first Chern-Ricci curvature of $\omega$ is zero.
        \item $\omega$ is a K\"ahler metric if and only if $b_2=b_1 \tau$.
    \end{enumerate}
\end{lemma}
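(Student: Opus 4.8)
The plan is to first rewrite $\omega$ in a form that exhibits a global unitary coframe, and then read off all three assertions from short exterior-algebra computations. Set
\[
c := \frac{b_2 - b_1 \operatorname{Re}\tau}{\operatorname{Im}\tau}, \qquad \psi := b_1 x + c\,y, \qquad \phi := z_1 - \psi z_2 ,
\]
so that $\overline{\partial \phi} = \bar\partial \bar\phi$ and
\[
\omega = \sqrt{-1}\,\bigl( dz \wedge d\bar z + \partial\phi \wedge \overline{\partial\phi} + dz_2 \wedge d\overline{z_2}\bigr).
\]
First I would verify that $\omega$ descends to $X$. The coefficient $c$ is chosen precisely so that $\psi \circ \gamma_1 = \psi + b_1$ (since $x \mapsto x+1$) and $\psi \circ \gamma_2 = \psi + b_2$ (since $x \mapsto x + \operatorname{Re}\tau$, $y \mapsto y + \operatorname{Im}\tau$ and $c\operatorname{Im}\tau = b_2 - b_1\operatorname{Re}\tau$). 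A direct substitution into the representation (\ref{gen-rep}) then yields $\phi \circ \gamma_k = e^{2\pi i \theta_k}\phi$, so that $\partial\phi \wedge \overline{\partial\phi}$ is multiplied by $|e^{2\pi i\theta_k}|^2 = 1$ and is invariant; the remaining two summands are manifestly invariant. Positive-definiteness is immediate because $\{dz, \partial\phi, dz_2\}$ is a coframe.

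For the vanishing of the Chern--Ricci curvature, I would compute $\partial\phi = -\mu z_2\, dz + dz_1 - \psi\, dz_2$ with $\mu = \tfrac{1}{2}(b_1 - \sqrt{-1}\,c)$, so that $\omega = \sqrt{-1}\sum_{j} \eta_j \wedge \overline{\eta_j}$ with $\eta_0 = dz$, $\eta_1 = \partial\phi$, $\eta_2 = dz_2$. By Lemma \ref{detlemma}, $\det g = |\det B|^2$ where $B$ is the matrix of coefficients of the $\eta_j$; here $B$ is unipotent, hence $\det B = 1$, so $\det g \equiv 1$ and the first Chern--Ricci form $-\sqrt{-1}\,\partial\bar\partial \log \det g$ vanishes, giving part (2).

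Parts (1) and (3) both reduce to the single second-order computation
\[
\partial\bar\partial\bar\phi = -\bar\nu\, dz \wedge d\overline{z_2}, \qquad \bar\partial\partial\phi = -\nu\, d\bar z \wedge dz_2, \qquad \nu := \tfrac{1}{2}(b_1 + \sqrt{-1}\,c).
\]
Since $\omega$ is real and $dz\wedge d\bar z$, $dz_2 \wedge d\overline{z_2}$ are closed, one has $\partial\omega = -\sqrt{-1}\,\partial\phi \wedge \partial\bar\partial\bar\phi = \sqrt{-1}\,\bar\nu\, \partial\phi \wedge dz \wedge d\overline{z_2}$, and since $\partial\phi \wedge dz \wedge d\overline{z_2}$ contains the nonzero term $dz_1 \wedge dz \wedge d\overline{z_2}$, this vanishes iff $\nu = 0$; the elementary identity $\nu = 0 \iff c = \sqrt{-1}\,b_1 \iff b_2 = b_1\tau$ then gives (3). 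For (1), writing $\alpha = \sqrt{-1}\,dz\wedge d\bar z$, $\gamma = \sqrt{-1}\,dz_2 \wedge d\overline{z_2}$, $\beta = \sqrt{-1}\,\partial\phi\wedge\overline{\partial\phi}$, I would use $\alpha^2 = \beta^2 = \gamma^2 = 0$ and $d(\alpha\wedge\gamma)=0$ to get $\omega^2 = 2(\alpha+\gamma)\wedge\beta$, whence closedness of $\alpha,\gamma$ gives $\partial\bar\partial\,\omega^2 = 2(\alpha+\gamma)\wedge\partial\bar\partial\beta$ with $\partial\bar\partial\beta = \sqrt{-1}\,|\nu|^2\, d\bar z \wedge dz_2 \wedge dz \wedge d\overline{z_2}$. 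As this $(2,2)$-form already carries all of $dz, d\bar z, dz_2, d\overline{z_2}$, wedging with either $\alpha$ or $\gamma$ repeats a differential and kills the product, so $\partial\bar\partial\,\omega^2 = 0$ and $\omega$ is Gauduchon.

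The main obstacle is purely bookkeeping: correctly tracking bidegrees and signs in $\partial$ and $\bar\partial$ of the \emph{non-holomorphic} $(1,0)$-form $\partial\phi$ (note that $\phi$ depends on $\bar z$ through $\psi$, which is why $\bar\partial\partial\phi \neq 0$). Once $\partial\phi$ and the two displayed second-order forms are in hand, every assertion follows from merely observing which holomorphic and antiholomorphic differentials are present.
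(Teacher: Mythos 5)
Your proof is correct and follows the same direct-computation route the paper indicates for this lemma (invariance of $\partial\phi$ under the representation (\ref{gen-rep}), $\det g=1$ via Lemma \ref{detlemma} for Chern--Ricci flatness, and explicit evaluation of $\partial\omega$ and $\partial\bar\partial(\omega^2)$, which the paper carries out only in the normalized special case); organizing everything around the unipotent coframe $\{dz,\partial\phi,dz_2\}$ is a clean way to treat general $(b_1,b_2)$ at once. One cosmetic point: $\omega^2=2(\alpha+\gamma)\wedge\beta+2\,\alpha\wedge\gamma$ rather than $2(\alpha+\gamma)\wedge\beta$, but the omitted term has constant coefficients and is annihilated by $\partial\bar\partial$, as your aside about $d(\alpha\wedge\gamma)=0$ implicitly acknowledges.
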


Now we focus on the case of $b_2 \neq b_1 \tau$ which corresponds to all indecomposable bundles by Proposition \ref{Xfunction}. It suffices to consider any fixed pair of $(b_1, b_2)$ with $b_2 \neq b_1 \tau$ as the resulting total space with the same value of $(\theta_1, \theta_2)$ must be biholomorphic. After we choose $b_1=1$ and $b_2 =\overline{\tau}$, (\ref{Gaugen}) is further reduced to
\begin{align}
    \omega=\sqrt{-1} \Big[dz \wedge d\bar z +dz_1 \wedge d\overline z_1 -z dz_1 \wedge d\overline z_2 -\bar z dz_2 \wedge d\overline z_1 + (1+|z|^2) dz_2 \wedge d\overline z_2\Big].  \label{Gaugenspe}
\end{align}

\begin{lemma}\label{repcomp}
$\omega$ defined in (\ref{Gaugenspe}) is complete.
\end{lemma}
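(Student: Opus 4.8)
The plan is to rewrite $\omega$ so that it becomes manifestly unitary with respect to a (non-holomorphic) coframe, and then to reduce completeness to an elementary length estimate on the universal cover. Set $\eta = dz_1 - \bar z\,dz_2$. Expanding $\eta \wedge \bar\eta$ shows that (\ref{Gaugenspe}) is exactly
\[
\omega = \sqrt{-1}\big(dz\wedge d\bar z + \eta\wedge\bar\eta + dz_2\wedge d\bar z_2\big),
\]
so that $(dz,\eta,dz_2)$ is a unitary coframe for $\omega$. Consequently the Riemannian length of any $C^1$ curve $\gamma(t)=(z(t),z_1(t),z_2(t))$, $t\in[t_0,t_1]$, equals (in the standard normalization)
\[
L(\gamma)=\int_{t_0}^{t_1}\sqrt{|z'|^2+|z_1'-\bar z z_2'|^2+|z_2'|^2}\,dt.
\]

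Since $X=\mathbb{C}^3/\pi_1(\Sigma)$ and a Riemannian covering is a local isometry, $X$ is complete if and only if $(\mathbb{C}^3,\omega)$ is complete, so I would work on the universal cover and argue as in the proof of Lemma \ref{complete2}. Fix $p=(0,0,0)$ and let $\gamma$ join $p$ to $q=(z_*,z_{1*},z_{2*})$ with $L=L(\gamma)$. Three elementary bounds then control the coordinates of $q$. First, $|z_*|\le\int|z'|\,dt\le L$, and since $z(t_0)=0$ the same estimate yields the a priori bound $\sup_t|z(t)|\le L$. Second, $|z_{2*}|\le\int|z_2'|\,dt\le L$. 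For the twisted $z_1$-direction I would write
\[
z_{1*}=\int_{t_0}^{t_1}(z_1'-\bar z z_2')\,dt+\int_{t_0}^{t_1}\bar z z_2'\,dt,
\]
and estimate $|z_{1*}|\le L+\big(\sup_t|z(t)|\big)\int|z_2'|\,dt\le L+L^2$, using the a priori control on $\sup_t|z(t)|$ just obtained.

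These bounds show that $d_\omega(p,q)<R$ forces $|z_*|,|z_{2*}|\le R$ and $|z_{1*}|\le R+R^2$, so every metric ball $B(p,R)$ lies in a Euclidean-bounded subset of $\mathbb{C}^3$. Because the metric topology agrees with the standard one, closed bounded subsets of $(\mathbb{C}^3,\omega)$ are compact, and Hopf--Rinow then gives geodesic completeness of $(\mathbb{C}^3,\omega)$, hence of $X$. The one step requiring care is the $z_1$-direction: since $\eta$ is not closed one cannot bound $|z_{1*}|$ directly by $L$, and the decomposition above combined with the bound $\sup_t|z(t)|\le L$ is precisely what overcomes this difficulty.
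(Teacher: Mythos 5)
Your proof is correct and follows essentially the same route as the paper: pass to the universal cover, use the unitary coframe $(dz,\,dz_1-\bar z\,dz_2,\,dz_2)$ to write the length element as $|z'|^2+|z_1'-\bar z z_2'|^2+|z_2'|^2$, and bound the coordinates of the endpoint by the length. Your treatment is in fact slightly more streamlined, since the explicit estimates $|z_*|,|z_{2*}|\le L$ and $|z_{1*}|\le L+L^2$ make the argument self-contained, whereas the paper delegates the remaining case to Case 2 of Lemma \ref{complete2}.
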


\begin{proof}[Proof of Lemma \ref{repcomp}]
We may follow the proof of Lemma \ref{complete2}. Consider the lifting metric $\omega$ on the universal covering $\widetilde{X}$. For a fixed point $p=(0, 0, 0)$, let
$\gamma(t)=(z(t), \lambda(t), \eta(t))$ be any $C^1$ curve connecting $p$ and an arbitrary point $q=(z_{\ast}, z_{1\ast}, z_{2\ast})$. Once we write $\gamma'(t)=(z'(t), \lambda'(t), \eta'(t))$, then
\begin{equation}
    g(\gamma'(t), \gamma'(t))=|z'(t)|^2+|\lambda'(t)-\overline{z} \eta'(t)|^2+|\eta'(t)|^2.  \label{goodlower}
\end{equation}
If either $|z_{\ast}|$ or $|z_{1\ast}|$ goes to $\infty$, we may estimate $L(\gamma(t)) \rightarrow \infty$. Otherwise, we rewrite the above equation as
\begin{equation*}
    g(\gamma'(t), \gamma'(t))=|z'(t)|^2+(1+|z|^2)\Big| \frac{\lambda'(t)z}{1+|z|^2}-\eta'(t) \Big|^2+\frac{|\lambda'(t)|^2}{1+|z|^2}.
\end{equation*} The remaining argument follows exactly as in Case 2 of the proof of Lemma \ref{complete2}.

\end{proof}

\begin{proof}[Proof of Theorem \ref{Hermipoly} when $E$ is of \ref{T3}]

As $E$ is of \ref{T3}, we have $b_2 \neq b_1\tau$ in the representation (\ref{gen-rep}). By Proposition \ref{Xfunction}, $\mathcal{O}(X)$ is nonempty if and only if both $\theta_1$ and $\theta_2$ are rational. Let $m$ is the smallest positive integer such that $m\theta_1$, $m\theta_2 \in \mathbb Z$. By (\ref{taylor2}), we need to estimate the growth of $z_2^m$ with respect to $d_g$ on $X$.

Let $\rho: \widetilde{X} \rightarrow X$ denote the universal covering and $\pi: E \rightarrow \Sigma$ the projection map. Unlike the \ref{T2} case, the fiber $\pi^{-1}(0)$ for the point $z=0$ on $\Sigma$ is no longer totally geodesic. Given a fixed point $p=\rho((0, 0, 0))$ and any point $q=\rho((z, z_1, z_2)) \in X$. Let $d_g(q)$ denote the distance from $p$ to $q$. Thanks to (\ref{goodlower}), we have $d_g(q) \geq |z_2(q)|$ on $\widetilde{X}$. It descends to $X$ as
$|z_2^m(q)| \leq (d_g(q))^m$. Moreover, it attains an equality along the sequence $\rho((0, 0, z_2))$ as $|z_2| \rightarrow \infty$. Hence, the desired characterization of $\mathcal{O}_d(X, g)$ follows.

\end{proof}

\begin{remark}\label{quasirem}
    We may view $\omega$ defined in $(\ref{Gaugen})$ as a family of Hermitian metrics on the same total space $X$. Moreover, they are quasi-isometric to each other. First of all, we may check the bundle isomorphism defined in Lemma \ref{isobundle} is an isometry with respect to $\omega$. Therefore, it suffices to consider a vector bundle $E$ of \ref{T3} with $b_1=0$ and $b_2 \neq 0$. As $b_2$ varies, it is direct to check the bundle isomorphism defined in (\ref{biholomap}) is a quasi-isometry.
\end{remark}

\section{Function theory on total spaces of some vector bundles of nonzero degrees}\label{sec6}

In this section, we consider a class of vector bundles of nonzero degrees over an elliptic curve, and prove Proposition \ref{n2d-1noholo}.

Following Grothendieck's exposition of Weil's result (\cite{Grothendieck}), Narasimhan-Seshadri \cite[Proposition 6.2 on p.551]{NS1965} proved that any indecomposable vector bundle of rank $n$ and degree $d$ with $(-n<d \leq 0)$ over a compact Riemann surface $M$ of genus $\geq 1$ can be associated to a representation of some Fuchsian group constructed from $\pi_1(M)$. We begin with a lemma regarding the existence of a hyperbolic orbifold structure on any compact Riemann surface with genus $\geq 1$.

\begin{lemma}[see {\cite[p.549]{NS1965}} for example]\label{orbifold1}
    Given any compact Riemann surface $M$ with genus $\geq 1$, a point $x_0 \in X$, and an integer $N \geq 1$. There exists a simply-connected Riemann surface $U$ (which is biholomorphic to the upper half plane $\mathbb{H}$), and a branched covering $p: U \rightarrow M$ with the following properties:
    \begin{enumerate}
        \item  The branched locus on $M$ is the single point $x_0$.
        \item  Let $\widetilde{\pi}$ be the covering transformation group of $p$. For any $y \in p^{-1}(x_0)$, the stablizer subgroup of $\widetilde{\pi}$ at $y$ which is denoted by $\pi_y$, is a finite cyclic group of order $N$. Moreover, $N$ is independent of the choice of $y \in p^{-1}(x_0)$.
    \end{enumerate}
\end{lemma}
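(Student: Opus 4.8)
The plan is to realize $M$, equipped with a single orbifold cone point of order $N$ at $x_0$, as a hyperbolic Riemann surface orbifold, and to take $p: U \to M$ to be its holomorphic universal orbifold covering. Concretely, I would regard $M$ with its given conformal structure and attach the orbifold datum: isotropy $\mathbb{Z}/N$ at $x_0$ and trivial isotropy elsewhere; call the resulting orbifold $\mathcal{O}$. The strategy is then to verify that $\mathcal{O}$ is of hyperbolic type, to invoke the uniformization theorem for Riemann surface orbifolds so as to present $\mathcal{O}$ as a quotient $\mathbb{H}/\widetilde{\pi}$ by a cocompact Fuchsian group $\widetilde{\pi} \subset \operatorname{PSL}(2,\mathbb{R}) = \operatorname{Aut}(\mathbb{H})$, and finally to read off properties (1) and (2) from the structure of the elliptic elements of $\widetilde{\pi}$.

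First I would check hyperbolicity through the orbifold Euler characteristic. Writing $g \geq 1$ for the genus of $M$, one has
\[
\chi^{\mathrm{orb}}(\mathcal{O}) = \chi(M) - \Big(1 - \frac{1}{N}\Big) = (2 - 2g) - \Big(1 - \frac{1}{N}\Big) = 1 - 2g + \frac{1}{N}.
\]
This is negative for every $N \geq 1$ when $g \geq 2$, and for $N \geq 2$ when $g = 1$ (the relevant range for our application, since a torus with no branching is uniformized by $\mathbb{C}$ rather than $\mathbb{H}$). A closed orientable two-orbifold with $\chi^{\mathrm{orb}} < 0$ is good and of hyperbolic type, which is classical two-dimensional orbifold uniformization.

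Next I would apply Fuchsian uniformization. Since $\mathcal{O}$ is a hyperbolic Riemann surface orbifold, its holomorphic universal orbifold cover $U$ is biholomorphic to $\mathbb{H}$, and the covering transformation group $\widetilde{\pi} = \pi_1^{\mathrm{orb}}(\mathcal{O})$ acts on $\mathbb{H}$ as a cocompact Fuchsian group whose quotient carries the given conformal structure. The branch points of $p: \mathbb{H} \to \mathbb{H}/\widetilde{\pi} = M$ are precisely the images of the fixed points of the elliptic elements of $\widetilde{\pi}$; by our choice of orbifold datum these form the single orbit $p^{-1}(x_0)$, giving property (1). For property (2), an elliptic element fixing $y \in \mathbb{H}$ acts by a rotation, so the stabilizer $\pi_y$ is a finite cyclic group whose order equals the local ramification index, namely $N$ by the chosen cone structure; independence of $y \in p^{-1}(x_0)$ follows because these points form one $\widetilde{\pi}$-orbit, along which stabilizers are conjugate and hence of equal order.

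The main obstacle is the realization step: one must exhibit a Fuchsian group whose quotient orbifold has not merely the correct signature $(g; N)$ but also the prescribed underlying conformal structure, namely that of the given $M$ with branch point exactly at $x_0$. I would handle this by uniformizing the conformal orbifold $\mathcal{O}$ itself, rather than abstractly realizing a signature, so that the induced conformal structure on $\mathbb{H}/\widetilde{\pi}$ equals $M$ by construction; the substantive input here is the existence and compatibility supplied by orbifold uniformization (equivalently, the surjectivity of the Teichm\"uller space of signature $(g; N)$ onto that of genus $g$). I would also note explicitly that the case $g = 1$ requires $N \geq 2$ in order that $U \cong \mathbb{H}$, in agreement with the hyperbolic regime identified from $\chi^{\mathrm{orb}} < 0$.
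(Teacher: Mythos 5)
The paper offers no proof of this lemma: it is quoted from Narasimhan--Seshadri \cite[p.549]{NS1965}, and the text immediately after it simply asserts that $\widetilde{\pi}$ is realized as a cocompact Fuchsian group of signature $(1;N)$. Your argument is the standard one underlying that citation, and it is correct: attach the orbifold datum $\mathbb{Z}/N$ at $x_0$, check $\chi^{\mathrm{orb}}<0$, invoke uniformization of hyperbolic $2$-orbifolds (equivalently, that a good closed orbifold with negative orbifold Euler characteristic is $\mathbb{H}/\widetilde{\pi}$ for a cocompact Fuchsian group), and read off (1) and (2) from the elliptic elements; the independence of the stabilizer order along the orbit $p^{-1}(x_0)$ is just conjugacy of stabilizers. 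The one substantive point you rightly isolate is the realization of the \emph{given} conformal structure on $M$ with branch point exactly at $x_0$, which is supplied by uniformizing the conformal orbifold itself (or, in the classical route of \cite{NS1965}, by first passing to a finite Galois branched cover of genus $\ge 2$ and then to its universal cover); either way this is legitimate standard input. You also correctly observe that the lemma as literally stated fails for $g=1$, $N=1$, where the simply connected cover is $\mathbb{C}$ rather than $\mathbb{H}$; this is a genuine imprecision in the statement rather than in your proof, and it is harmless for the paper's application, which only uses $g=1$ with $N=2$.
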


Given an elliptic curve $\Sigma$ with a fixed point $x_0 \in \Sigma$, and an integer $N \geq 1$. By Lemma \ref{orbifold1}, we have a branched covering $p: \mathbb{H} \rightarrow \Sigma$. The corresponding covering transformation group $\widetilde{\pi}$ is also called the orbifold fundamental group of $(\Sigma, x_0)$. It can be expressed as an extension of $\pi_1(\Sigma)$ by $\pi_y$.
\begin{equation}
\begin{tikzcd}[column sep=20pt,row sep=20pt]
1 \arrow{r} & \pi_y  \arrow{r}{} & \widetilde{\pi}  \arrow{r}{} &  \pi_1(\Sigma) \arrow{r} & 1.
\end{tikzcd}  \label{groupext}
\end{equation} Moreover, it has a presentation
\begin{align}
    \widetilde{\pi}=\{<\gamma_1, \gamma_2>\ |\ [\gamma_1, \gamma_2]\gamma_0=\gamma_0^{N}=1\ \}.  \label{piwpresent}
\end{align}
By Lemma \ref{orbifold1}, $\widetilde{\pi}$ can be realized as a Fuchsian group which acts discretely and cocompactly on $\mathbb{H}$, and it is of signature $(1; N)$. In other words, there exists a representation $\sigma: \widetilde{\pi} \rightarrow \operatorname{PSL}(2, \mathbb{R})$ whose image is a Fuchsian group of signature $(1; N)$. We refer to \cite{Beardon} and \cite{Katok} for basic facts on Fuchsian groups. We write the left action of $\gamma \in \widetilde{\pi}$ on $\mathbb{H}$ as $z \rightarrow \sigma(\gamma)(z)$ for any $\gamma \in \widetilde{\pi}$. The underlying ellpitic curve $\Sigma$ admits a hyperbolic orbifold structure with a conical singularity (of angle $\frac{2\pi}{N}$) at $x_0$. Recall that a character $\tau$ of $\pi_y$ is defined as an element in $\operatorname{Hom}(\pi_{y}, \mathbb{C}^{\ast})$. Following \cite[p.549]{NS1965}, we call a representation $\rho: \widetilde{\pi} \rightarrow \operatorname{GL}(n, \mathbb{C})$ is \emph{of type $\tau$} if $\rho(\gamma)=\tau(\gamma)E$ for any $\gamma \in \pi_y$ and $E$ the identity matrix. Given any representation $\rho$ of type $\tau$. we may define an action of $\widetilde{\pi}$ on $\widetilde{E}=\mathbb{H} \times \mathbb{C}^n$ by
\begin{equation}
  (z, z_1, \cdots, z_n) \rightarrow  (\sigma(\gamma)(z), \rho(\gamma)(z_1, \cdots, z_n)),\ \ \ \gamma \in \widetilde{\pi}.    \label{actionpic}
\end{equation}
Following \cite[p.549]{NS1965}, we call such a rank $n$ trivial bundle $\widetilde{E}$ with the action (\ref{actionpic}) a \emph{$\pi$-bundle}. Let $p_{\ast}(\mathcal{O}(\widetilde{E}))$ denote the direct image sheaf of $\mathcal{O}(\widetilde{E})$ of $p: \mathbb{H} \rightarrow \Sigma$, and $p_{\ast}^{\widetilde{\pi}} (\mathcal{O}(\widetilde{E}))$ the subsheaf of $\widetilde{\pi}$-invariant sections. It can be shown that $p_{\ast}^{\widetilde{\pi}} (\mathcal{O}(\widetilde{E}))$ is a locally free sheaf of rank $n$. Let $p_{\ast}^{\widetilde{\pi}} (\widetilde{E})$ denote the corresponding vector bundle on $\Sigma$.

\begin{lemma}[{\cite[Proposition 6.2]{NS1965}}]\label{degnonzero}
Let $n \geq 1$ and $-n < d \leq 0$ two integers. Consider the elliptic curve $\Sigma$ defined in
(\ref{ecurve}) and any fixed point $x_0 \in \Sigma$. Let $p: \mathbb{H} \rightarrow \Sigma$ be the branched covering in Lemma \ref{orbifold1}. Assume that $\gamma_0$ is the generator of the cyclic group $\pi_y$ ($y \in p^{-1}(x_0)$) and choose the character $\tau(\gamma_0)=e^{-\frac{2\pi i d}{n}}$. Then for any given indecomposable vector bundle $E$ of rank $n$ and degree $d$, there exists a representation $\rho: \widetilde{\pi} \rightarrow \operatorname{GL}(n, \mathbb{C})$ is of type $\tau$ so that $E$ is isomorphic to $p_{\ast}^{\widetilde{\pi}} (\widetilde{E})$.
\end{lemma}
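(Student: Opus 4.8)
The plan is to realize the lemma as essential surjectivity of the direct-image functor $\widetilde{E}\mapsto p_{\ast}^{\widetilde{\pi}}(\widetilde{E})$ from $\pi$-bundles of type $\tau$ onto vector bundles on $\Sigma$, and to reduce the construction of the desired representation to the ordinary Weil theorem (as stated after \cite{Weil}) together with a local analysis at the branch point $x_0$. To begin, I would fix the cone order to be $N=n$, which is permitted by Lemma \ref{orbifold1}; the hypothesis $-n<d\leq 0$ then guarantees that $\mu:=-d/n$ lies in $[0,1)$ and that $\tau(\gamma_0)=e^{2\pi i\mu}$ satisfies $\tau(\gamma_0)^{N}=1$, so that $\tau$ is a genuine character of $\pi_y=\langle\gamma_0\rangle$. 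The first task is the rank and degree bookkeeping: $p_{\ast}^{\widetilde{\pi}}$ preserves rank, since over the locus where the $\widetilde{\pi}$-stabilizers are trivial the invariant sections simply descend; and a local computation near $y\in p^{-1}(x_0)$ — writing $t=w^{N}$ for the coordinate on $\Sigma$ and expanding an invariant section $s(w)=\sum_k a_k w^k$ subject to $s(\zeta w)=e^{-2\pi i d/n}s(w)$ with $\zeta=e^{2\pi i/N}$ — shows that the scalar monodromy $e^{-2\pi i d/n}I_n$ forces a uniform vanishing order at $x_0$ that adjusts the degree of the descended bundle to equal exactly $d$. This is precisely why the character $\tau(\gamma_0)=e^{-2\pi i d/n}$ is the right choice.

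For the surjectivity itself, given an indecomposable $E$ of rank $n$ and degree $d$, I would pass to the orbifold picture. Pulling back $E$ along $p$ yields a $\widetilde{\pi}$-equivariant holomorphic bundle on $\mathbb{H}$, which, since $\mathbb{H}$ is Stein and contractible, is holomorphically trivial and therefore described by a holomorphic automorphy cocycle $\widetilde{\pi}\times\mathbb{H}\to\operatorname{GL}(n,\mathbb{C})$. The goal is to show this cocycle is holomorphically equivalent to a \emph{constant} one, that is, to a genuine representation $\rho:\widetilde{\pi}\to\operatorname{GL}(n,\mathbb{C})$, and that $\rho$ is forced to be of type $\tau$. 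The mechanism I would use is to tensor $E$ by an orbifold line bundle $\ell$ carrying weight $\mu=-d/n$ at $x_0$ and ordinary degree $0$ elsewhere: then $E\otimes\ell$ has orbifold degree $d+n\mu=0$ and is still indecomposable, so Weil's theorem provides it with a flat structure having trivial local monodromy at $x_0$. Untwisting by $\ell^{-1}$, whose monodromy at the cone point is the scalar $e^{-2\pi i d/n}$, produces a representation $\rho$ of $\widetilde{\pi}$ with $\rho(\gamma_0)=e^{-2\pi i d/n}I_n$, i.e.\ of type $\tau$, and the rank and degree matching from the first step gives $p_{\ast}^{\widetilde{\pi}}(\widetilde{E})\cong E$.

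The part I expect to be most delicate is this flattening step, namely the orbifold refinement of Weil's theorem: one must verify that the holomorphic automorphy cocycle can indeed be reduced to locally constant matrices, and that the local monodromy at $x_0$ is pinned to the \emph{scalar} $e^{-2\pi i d/n}I_n$ rather than to a general matrix sharing those eigenvalues. Here indecomposability of $E$ is essential — it is what prevents the local monodromy from splitting into distinct eigenvalue blocks — while the constraint $-n<d\leq 0$ is exactly what places the single weight $\mu$ in $[0,1)$, so that the correspondence between orbifold bundles of weight $\mu$ and ordinary degree-$0$ bundles is unambiguous. Since the assertion is precisely \cite[Proposition 6.2]{NS1965}, I would in the write-up either reproduce this reduction following Grothendieck's exposition of Weil's argument or invoke it directly, taking care to record that the chosen $N=n$ and the character $\tau(\gamma_0)=e^{-2\pi i d/n}$ produce the stated type and the correct degree $d$.
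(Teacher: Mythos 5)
The paper offers no proof of this lemma: it is quoted verbatim from Narasimhan--Seshadri (\cite[Proposition 6.2]{NS1965}) and used as a black box, so there is no in-paper argument to compare yours against. Judged on its own terms, your sketch follows the route one would expect from the Grothendieck--Weil exposition that the paper itself points to: you correctly take $N=n$ so that $\tau(\gamma_0)=e^{-2\pi i d/n}$ is an honest character of $\pi_y$, your local computation $s(\zeta w)=e^{-2\pi i d/n}s(w)$ forcing $a_k=0$ unless $k\equiv -d \pmod n$ is the right mechanism for showing $\deg p_{\ast}^{\widetilde{\pi}}(\widetilde{E})=d$, and the reduction of essential surjectivity to a flatness criterion for the automorphy cocycle is the correct strategy. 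The one load-bearing step you do not actually carry out is the one you flag yourself: the equivariant (orbifold) refinement of Weil's theorem, i.e.\ that an indecomposable $\pi$-bundle of type $\tau$ whose descent has degree $d$ admits a reduction of its holomorphic cocycle to a constant representation. Your "tensor by an orbifold line bundle $\ell$ of weight $\mu$ and apply ordinary Weil" phrasing skirts circularity, since $E\otimes\ell$ is itself only an orbifold object and ordinary Weil does not literally apply to it; this is precisely the content of the cited proposition, so invoking \cite{NS1965} there is legitimate but means your write-up is a contextualization of the citation rather than an independent proof. A minor imprecision: the scalar form $\rho(\gamma_0)=\tau(\gamma_0)E$ is imposed by the definition of "type $\tau$" and by the choice of local model at the cone point, not really extracted from indecomposability; indecomposability enters instead through the Weil-type degree-zero criterion applied to each indecomposable summand.
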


Let $N=n=2$ and $d=-1$ in Lemma \ref{orbifold1} and Lemma \ref{degnonzero}. We define \textit{a pointed elliptic curve} as an elliptic curve $\Sigma$ with a choice of $x_0 \in \Sigma$. There exists a specific pointed elliptic curve whose corresponding covering transformation group $\widetilde{\pi}$ (in Lemma \ref{orbifold1}) is an arithmetic Fuchsian group of signature $(1; 2)$. We refer the reader to \cite[Chapter 5]{Katok} for an introduction of arithmetic Fuchsian groups. The particular example we discussed is associated to a representation $\sigma: \widetilde{\pi} \rightarrow \operatorname{SL}(2, \mathbb{R})$
\begin{equation}
\sigma(\gamma_1)=\alpha=\begin{pmatrix}
    \frac{\sqrt{6}+\sqrt{2}}{2} & 0 \\
    0  &   \frac{\sqrt{6}-\sqrt{2}}{2}
\end{pmatrix},\ \ \ \sigma(\gamma_2)=\beta=\begin{pmatrix}
    \sqrt{2} & 1 \\
    1  &   \sqrt{2}
\end{pmatrix}.  \label{defrepsig}
\end{equation}
It follows from (\ref{defrepsig}) that
\[
\sigma(\gamma_0)=\delta=\begin{pmatrix}
    \sqrt{3} & -(\sqrt{2}+\sqrt{6}) \\
   \sqrt{6}-\sqrt{2}  &   -\sqrt{3}
\end{pmatrix},\ \text{and} \ \ \delta^2=-E.
\]
The image of the unique fixed point of $\delta$ in $\mathbb{H}$ under the map $p: \mathbb{H} \rightarrow \Sigma$ is exactly our choice of $x_0 \in \Sigma$. Indeed, the particular choice of $\sigma$ in (\ref{defrepsig}) is from the list of the case $e=2$ in \cite[Theorem 4.1 on p.392]{Takeuchi}.

Now we define a specific representation $\rho: \widetilde{\pi} \rightarrow U(2)$ of type $\tau$ as follows
\begin{equation}
    \rho(\gamma_1)=A=e^{2\pi i\theta_1} \begin{pmatrix}
    0 &  1 \\
    1  &  0
\end{pmatrix},\ \ \rho(\gamma_2)=B=e^{2\pi i\theta_2} \begin{pmatrix}
    -1 &  0 \\
    0  &  1
\end{pmatrix}, \ \text{where}\ \theta_1, \theta_2 \in [0, 1).  \label{defreprho}
\end{equation}
It follows that $\rho(\gamma_0)=-E$, which implies that $\tau(\gamma_0)=-1$. According to Lemma \ref{degnonzero}, $\rho$ defines a rank $2$ bundle $p_{\ast}^{\widetilde{\pi}} (\widetilde{E})$ of degree $-1$. In particular, it is pointed out by Donaldson \cite[p.579]{Donaldson2021} (see also \cite[Proposition 9.1]{NS1965}) that if $\theta_1=\theta_2=0$, $\rho$ in (\ref{defreprho}) corresponds to the `canonical' indecomposable vector bundle $E(2, -1)$ in \cite[Theorem 10]{Atiyah}. It follows from \cite{Atiyah} that any indecomposable vector bundle $E$ of rank $2$ and degree $-1$ can be written as $E=E(2, -1) \otimes L$ where $L \in \operatorname{Pic}^{0} (M)$. Together with Lemma \ref{degnonzero}, (\ref{defrepsig}) and (\ref{defreprho}) yield all indecomposable rank $2$ bundles of degree $-1$ on $(\Sigma, x_0)$.

\begin{proposition}\label{n2d-1noholo}
Let $(\Sigma, p)$ be a pointed elliptic curve and $E=p_{\ast}^{\widetilde{\pi}} (\widetilde{E})$ with $\sigma$ and $\rho$ defined in (\ref{defrepsig}) and (\ref{defreprho}). Let $X$ denote the total space of $E$. Then $\mathcal{O}(X)$ can be determined. In particular, the total space of $E(2, -1)$ is holomorphically convex.
\end{proposition}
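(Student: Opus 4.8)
The plan is to reduce the statement to the positivity of the dual bundle $E^{\ast}$ and then invoke Grauert's contractibility theorem. Since $X$ is the total space of the rank $2$ bundle $E$ over the compact curve $\Sigma$, every $f\in\mathcal{O}(X)$ admits a fibrewise Taylor expansion $f=\sum_{k\ge 0}s_k$ whose degree-$k$ part $s_k$ is a global section of $\operatorname{Sym}^k E^{\ast}$, and conversely such sections (with suitable growth) assemble into holomorphic functions. Thus $\mathcal{O}(X)$ is governed by the graded object $R=\bigoplus_{k\ge 0}H^0(\Sigma,\operatorname{Sym}^k E^{\ast})$, and ``determining $\mathcal{O}(X)$'' amounts to understanding $\operatorname{Sym}^k E^{\ast}$ for all $k$. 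By Lemma \ref{degnonzero} together with the remark following (\ref{defreprho}), for arbitrary $(\theta_1,\theta_2)$ one has $E=E(2,-1)\otimes L$ for some $L\in\operatorname{Pic}^0(\Sigma)$, so the analysis is uniform in $(\theta_1,\theta_2)$.

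First I would establish the positivity. Since $E$ is indecomposable of rank $2$ and degree $-1$ with $\gcd(2,1)=1$, it is stable, whence $E^{\ast}$ is stable of slope $\mu(E^{\ast})=\tfrac12>0$; by the ampleness criterion on a curve (a bundle all of whose quotient bundles have positive degree is ample) $E^{\ast}$ is ample. In characteristic zero symmetric powers of semistable bundles remain semistable, so $\operatorname{Sym}^k E^{\ast}$ is semistable of slope $k/2$. Dualising, $\operatorname{Sym}^k E$ is semistable of negative slope and hence has no sections, giving $h^1(\Sigma,\operatorname{Sym}^k E^{\ast})=h^0(\Sigma,\operatorname{Sym}^k E)=0$; Riemann--Roch on the elliptic curve then yields $h^0(\Sigma,\operatorname{Sym}^k E^{\ast})=\deg\operatorname{Sym}^k E^{\ast}=\tfrac{k(k+1)}{2}$ for $k\ge 1$ (and $=1$ for $k=0$). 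This determines $\mathcal{O}(X)$ as the completion $\widehat{R}$ of $R$.

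Next, since $E^{\ast}$ is ample the bundle $E$ is negative in the sense of Grauert, so its zero section $\Sigma_0\cong\Sigma$ is a compact exceptional analytic subset admitting a fundamental system of strongly pseudoconvex neighborhoods. By Grauert's contractibility theorem there is a proper holomorphic map $\pi\colon X\to Y$ onto a normal complex space which is biholomorphic away from $\Sigma_0$ and collapses $\Sigma_0$ to a single point. Concretely $Y=\operatorname{Spec}R$ is the affine cone over the projective variety $\mathbb{P}(E^{\ast})$ polarised by $\mathcal{O}(1)$; because $E^{\ast}$ is ample, $R$ is a finitely generated graded domain, so $Y$ is affine, hence Stein. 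A complex manifold admitting a proper holomorphic map onto a Stein space is holomorphically convex, so $X$ is holomorphically convex, and $\pi_{\ast}\mathcal{O}_X=\mathcal{O}_Y$ recovers $\mathcal{O}(X)=\mathcal{O}(Y)$. In particular this applies to $E(2,-1)$.

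The main obstacle is the passage from the algebraic positivity of $E^{\ast}$ to the analytic contraction: one must verify that $E^{\ast}$ ample indeed forces $E$ to be Grauert-negative (equivalently, that a suitable Hermitian metric produces strongly pseudoconvex tubular neighborhoods of $\Sigma_0$), so that Grauert's theorem is applicable. A secondary technical point is making the fibrewise-Taylor identification $\mathcal{O}(X)\cong\widehat{R}$ precise, i.e. controlling the growth of the coefficients $s_k$ so that the series converges; one should also check that the branch-point (orbifold) structure of $p\colon\mathbb{H}\to\Sigma$ does not interfere, which it does not, since $E=p^{\widetilde{\pi}}_{\ast}(\widetilde{E})$ is a genuine holomorphic bundle on the smooth curve $\Sigma$.
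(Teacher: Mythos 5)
Your route (ampleness of $E^{\ast}$, semistability of $\operatorname{Sym}^k E^{\ast}$, Riemann--Roch, and Grauert's contraction of the zero section) is completely different from the paper's, which lifts $f$ to $\mathbb{H}\times\mathbb{C}^2$, Taylor-expands in the fibre variables, uses Liouville's theorem on the index-two cocompact Fuchsian subgroup $\langle\alpha^2,\beta\rangle$ to force every coefficient $f_{k-t,t}$ to be constant, and then reads off from the arithmetic conditions on $\theta_1,\theta_2$ which constants survive. The problem is that the two computations give \emph{different answers}, so they cannot both be determining the same $\mathcal{O}(X)$. In degree $k=1$ you get $h^0(\Sigma,E^{\ast})=1$, while the paper finds no functions linear on the fibres; in degree $2$ you get $3$, the paper gets $1$ (only $c_{2,0}(z_1^2+z_2^2)$); and your dimensions $k(k+1)/2$ are independent of $(\theta_1,\theta_2)$, whereas the paper's answer collapses to the constants whenever $\theta_1$ or $\theta_2$ is irrational.

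The discrepancy is located exactly at the point you wave away in your last paragraph. The isotropy group $\pi_y$ at the ramification point acts on the fibre of $\widetilde{E}$ by $\rho(\gamma_0)=-E$, so the $\widetilde{\pi}$-invariant functions on $\mathbb{H}\times\mathbb{C}^2$ of fibre-degree $k$ are the sections of $p_{\ast}^{\widetilde{\pi}}(\operatorname{Sym}^k\widetilde{E}^{\ast})$, and the invariant direct image does \emph{not} commute with dualizing and symmetric powers across the branch point: $p_{\ast}^{\widetilde{\pi}}(\operatorname{Sym}^k\widetilde{E}^{\ast})\neq\operatorname{Sym}^k\bigl(p_{\ast}^{\widetilde{\pi}}\widetilde{E}\bigr)^{\ast}$. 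Concretely, invariant local sections of $\widetilde{E}$ near $y$ are odd in the local uniformizer $w$, so the evaluation map from the fibre of $E=p_{\ast}^{\widetilde{\pi}}(\widetilde{E})$ at $x_0$ to $\widetilde{E}_y$ is zero; consequently a section of $E^{\ast}$ not vanishing suitably at $x_0$ pulls back with a pole along $w=0$ and is invisible to the paper's lift, while conversely the ``lift'' used in the paper's first step is not a covering of the honest total space over the branch fibre. So either the proposition's $X$ is really the quotient $(\mathbb{H}\times\mathbb{C}^2)/\widetilde{\pi}$, in which case your reduction to $\bigoplus_k H^0(\Sigma,\operatorname{Sym}^k E^{\ast})$ computes the wrong object, or $X$ is the genuine total space of the locally free sheaf $E$ on $\Sigma$, in which case your graded dimensions are the right ones but you must then reconcile this with (and cannot claim to recover) the paper's explicit answer. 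Your Grauert argument for holomorphic convexity is sound for the honest total space (and the paper instead gets convexity directly from the explicit generators $z_1^2+z_2^2$, $z_1^2z_2^2$, \dots), but the claim that ``the branch-point structure does not interfere'' is precisely the assertion that needs proof, and as stated it is false for the computation the paper actually performs.
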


\begin{proof}[Proof of Proposition \ref{n2d-1noholo}]
For any given $f \in \mathcal{O}(X)$, we consider its lift $f \in \mathcal{O}(\mathbb{H} \times \mathbb{C}^2)$. For any $(z, z_1, z_2) \in \mathbb{H} \times \mathbb{C}^2$, $f$ satisfies
\begin{align}
    f(z,z_1,z_2)=f(\sigma(\gamma_1)(z), e^{2\pi \theta_1 i }z_2, e^{2\pi \theta_1 i } z_1)=f(\sigma(\gamma_2)(z), -e^{2\pi \theta_2 i }z_1, e^{2\pi \theta_2 i } z_2).     \label{ffinv}
\end{align}
We consider its Taylor expansion
\begin{equation}
    f(z,z_1,z_2)=f_0(z)+\sum_{k=1}^{+\infty} \sum_{t=0}^k f_{k-t,t}(z)z_1^{k-t} z_2^t.  \label{fftaylor}
\end{equation}
By (\ref{ffinv}) and (\ref{defrepsig}), If follows that for any $k \geq t \geq 0$ and $z \in \mathbb{H}$
\begin{align}
&f_{k-t,t}(z)=e^{2\pi \theta_1 k i } f_{t,k-t}(\alpha(z))=e^{2\pi \theta_2 k i } (-1)^{k-t} f_{k-t, t}(\beta(z)), \label{1stfinv} \\
&f_{t, k-t}(z)=e^{2\pi \theta_1 k i } f_{k-t, t}(\alpha(z))=e^{2\pi \theta_2 k i } (-1)^t f_{t, k-t}(\beta(z)). \label{2ndfinv}
\end{align} Then $|f_{k-t, t}(z)|$ is invariant under the subgroup $H$ generated by $\alpha^2$ and $\beta$. We may consider the group homomorphism $\psi: \sigma(\widetilde{\pi}) \rightarrow \mathbb{Z}_2$ defined as $\psi(\alpha)=1$ and $\psi(\beta)=0$. Obviously, the kernel of $\psi$ is $H$ and it is index $2$ subgroup of $\sigma(\widetilde{\pi})$. So $H$ is also a cocompact Fuchsian group (of signature $(1; 2, 2)$). Therefore, $f_{k-t, t}(z)=c_{k-t, t}$ which is a constant by Liouville's theorem. Moreover, $c_{t, k-t}=e^{2\theta_1 k} c_{k-t, t}$ and
\begin{equation*}
c_{k-t, t}=\begin{cases}
\neq 0,\ \ \text{if all of}\ 2\theta_1 k, \ \theta_2 k+\frac{k-t}{2},\ \text{and}\ \theta_2 k+\frac{t}{2} \in \mathbb{Z},\\
0,\ \ \ \text{otherwise}.
\end{cases}
\end{equation*}
In particular, if $\theta_1=\theta_2=0$, $X$ is the total space of $E(2, -1)$ and any $f \in \mathcal{O}(X)$ satisfies
\begin{equation*}
f(z, z_1, z_2)=c_{0}+c_{2, 0}(z_1^2+z_2^2)+[c_{4, 0}(z_1^4+z_2^4)+c_{2, 2}z_1^2z_2^2]+\cdots.
\end{equation*} It follows that $X$ is holomorphically convex.

\end{proof}

\begin{remark}
The proof of Proposition \ref{n2d-1noholo} relies on the observation that in a cocompact Fuchsian group of signature $(1; 2)$ generated by $\alpha$ and $\beta$, the index $2$ subgroup
$H$ generated by $\alpha^2$ and $\beta$ is also a Fuchsian group which acts cocompactly. The exact forms of $\alpha$ and $\beta$ are not essential. In general, given any indecomposable vector bundle of rank $n \geq 2$ and degree $-n<d <0$ with $n$ and $d$ coprime, a representation $\rho$ (as in (\ref{defreprho})) which corresponds to $E(n, d)$ in \cite[Theorem 10]{Atiyah} is given on \cite[p.579]{Donaldson2021}. By analyzing some suitable finite index subgroups it is possible to determine $\mathcal{O}(X)$ for the corresponding total space $X$.
\end{remark}

However, it is unclear whether similar results to those in Proposition \ref{n2d-1noholo} hold without assuming $-n<d<0$. Given any two integers $n$ and $d$, any indecomposable vector bundle of rank $n$ and degree $d$ over an elliptic curve is stable if and only if $n$ and $d$ are coprime, see \cite{Atiyah} and \cite[Appendix A]{Tu}. It seems natural to ask if there is any connection between the stability of vector bundles and the existence of nonconstant holomorphic functions on the correponding total spaces. We end the paper with a special case of Question \ref{queintro}.

\begin{question}
Consider any indecompsable vector bundle $E$ of rank $n$ and degree $d$ over an elliptic curve with $n \geq 2$ and $(n, d)=1$. Let $X$ be the total space of $E$. Can we determine $\mathcal{O}(X)$? What is the `best' complete K\"ahler (Hermitian) metric on $X$?
\end{question}

\bibliographystyle{acm}

\bibliography{neg_curved}

\end{document}